\numberwithin{equation}{section}
\theoremstyle{plain}
\newtheorem{thm}{Theorem}[section]
\newtheorem{lemma}[thm]{Lemma}
\newtheorem{prop}[thm]{Proposition}
\newtheorem{cor}[thm]{Corollary}
\newtheorem{conj}[thm]{Conjecture}
\newtheorem{remark}[thm]{Remark}
\newtheorem{convention}[thm]{Convention}
\theoremstyle{definition}
\newtheorem{definition}[thm]{Definition}
\newtheorem{exax}[thm]{Example}
\newtheoremstyle{colon}%
{}
{}
{\itshape}
{}
{\bfseries}
{:}
{ }
{}
\theoremstyle{colon}
\newtheorem*{claim}{Claim}
\def\={\;=\;}  \def\+{\,+\,}
\newcommand{\codim}{\operatorname{codim}}
\newcommand{\GL}{\operatorname{GL}}
\newcommand{\Res}{\operatorname{Res}}
\newcommand{\ord}{\operatorname{ord}}
\newcommand{\rank}{\operatorname{rank}}
\newcommand{\calH}{\mathcal H}
\newcommand{\calL}{\mathcal L}
\newcommand{\calM}{{\mathcal M}}
\newcommand{\calP}{\mathcal P}
\newcommand{\calR}{\mathcal R}
\newcommand{\calT}{{\mathcal T}}
\newcommand{\CC}{{\mathbb{C}}}
\newcommand{\PP}{{\mathbb{P}}}
\newcommand{\QQ}{{\mathbb{Q}}}
\newcommand{\RR}{{\mathbb{R}}}
\newcommand{\ZZ}{{\mathbb{Z}}}
\newcommand{\poles}{\underline{p}}
\newcommand{\zeroes}{\underline{z}}
\newcommand{\pr}{\operatorname{pr}}
\newcommand{\lG}{{\Gamma}}     
\newcommand{\Ram}{\operatorname{Ram}}
\newcommand{\hor}{{\mathrm {hor}}}
\newcommand{\ver}{{\mathrm {ver}}}
\DeclareDocumentCommand{\Ehor}{O{\lG}}{E^{\hor}(#1)}   
\DeclareDocumentCommand{\Ever}{O{\lG}}{E^{\ver}(#1)}  
\DeclareDocumentCommand{\Ehori}{O{i} O{\lG} }{E^{\hor}_{(#1)}(#2)}   
\DeclareDocumentCommand{\abEhor}{O{\lG}}{\widetilde{E^{\hor}}(#1)}
\newcommand\fib{\operatorname{fib}}
\DeclareDocumentCommand{\relhomZ}{O{X} O{\zeroes} O{\poles}}{H_1(#1\setminus #3,#2;\ZZ)}
\DeclareDocumentCommand{\relhomC}{O{X} O{\zeroes} O{\poles}}{H_1(#1\setminus #3,#2;\CC)}
\newcommand\St[1][(\mu)]{\calH #1}
\newcommand\logSt[1][(\mu)]{\calL #1}
\newcommand\PSt[1][(\mu)]{\PP\St[#1]}
\newcommand\Mgn[1][g,n]{\calM_{#1}}
\newcommand\eqc{=_{\operatorname{comp}}}
\newcommand\Exc[1][(\mu)]{\operatorname{Exc}#1}
\begin{document}
\title{Bi-algebraic geometry of strata of differentials in genus zero}
\author{Frederik Benirschke}

\begin{abstract}
We study algebraic subvarieties of strata of differentials in genus zero satisfying algebraic relations among periods. The main results are  Ax-Schanuel and Andr{\'e}-Oort-type theorems in genus zero. As a consequence, one obtains several equivalent characterizations of bi-algebraic varieties. It follows that bi-algebraic varieties in genus zero are foliated by affine-linear varieties. Furthermore, bi-algebraic varieties with constant residues in strata with only simple poles are affine-linear.
Additionally, we produce infinitely many new linear varieties in strata of genus zero, including infinitely many new examples of meromorphic Teichm\"uller curves.

\end{abstract}
\maketitle

\tableofcontents

\section{Introduction}

Let $\mu$ be an integer partition of $2g-2$.
The {\em stratum of differentials} $\St$ is the moduli space parametrizing pairs $(X,\omega)$ where $X$ is a compact Riemann surface and $\omega$ a (possibly meromorphic) differential with 
\[
(\omega) = \sum_{i=1}^l \mu_ip_i, 
\]
where $\mu = (\mu_1,\ldots,\mu_l)$.
Associated with a differential $(X,\omega)$ is the relative cohomology class
\[
[\omega]\in H^1(X-P(\omega),Z(\omega);\ZZ).
\]
Here $Z(\omega)$ and $P(\omega)$ are the zeros and poles of $\omega$ in $X$, respectively.
It is known classically that, analytically locally, the stratum can be described in terms of relative cohomology.
More precisely,  choose a small open neighborhood $U\subseteq\St$ where the flat vector bundle with fiber $H_1(X-P(\omega),Z(\omega);\CC)$ is trivial.
After choosing a local frame $\gamma_1,\ldots,\gamma_d$ for the relative homology on $U$, one obtains a holomorphic map
\[
\phi_U:U \to \calP=\CC^d, (X,\omega)\mapsto \left(\int_{\gamma_1}\omega,\ldots,\int_{\gamma_d} \omega\right).
\]
For $U$ small enough, $\phi_U$ is a biholomorphism onto its image. One thus obtains local analytic coordinates on $\St$, called {\em period coordinates}.
The stratum is an algebraic variety, but period coordinates are only holomorphic coordinates.
An algebraic subvariety is called {\em bi-algebraic} if it agrees in local period coordinates with an algebraic subvariety $Z$ of $\calP$. The variety $Z$ is called an {\em algebraic model}.
This paper aims to understand the structure of bi-algebraic subvarieties of strata in genus zero.

Two algebraic maps associated with a stratum $\St$ in genus zero govern the behavior of bi-algebraic varieties: the {\em residue map }$R$ and the {\em twisted algebraic period map} $A$.

The {\em residue map} $R$ is a global algebraic map and sends a differential to the vector of residues at the marked poles.
Besides residues, a differential in genus zero also has relative periods $\int_{x_0}^{x_i} \omega$. The relative periods can be decomposed as
\[
\int_{x_0}^{x_i} \omega = F_i^{alg}+ F_i^{log},
\]
where $F^{alg}_i$ is a global algebraic function on $\St$, while $F_i^{log}$ depends on the choice of branch of logarithm and is only well-defined locally. To bypass the issue of locally choosing branches of logarithms,  we construct in \Cref{sec-periodsetup} a (locally closed) embedding of the stratum $\St$ into an algebraic torus $(\CC^*)^M$.
This allows to define the {\em twisted algebraic period map}\[
A:\St\times \CC^M \to \calP.
\]
The map $A$ is characterized by the property that it agrees with  local period coordinates if restricted to the graph of the exponential function \[
\Gamma\subseteq (\CC^*)^M\times \CC^M.\] The precise definition is in \Cref{eq:A}.

Our first result is an Ax-Schanuel theorem for strata in genus zero. 
Using the embedding $\St\subseteq (\CC^*)^M$, one defines a {\em  torus coset} $W\subseteq \St$ as the intersection of a translate of a subtorus $W'$ of $ (\CC^*)^M$ with $\St$.  Every torus coset has an {\em associated affine space } $V=\exp(W')\subseteq \CC^M$.
Similarly, a {\em residue coset} is a component of the intersection of a torus coset with a residue variety $R^{-1}(Z)$ for some algebraic variety $Z.$

Let \[\Omega =\{(s,A(s,v))\in \St\times \calP\,|\, (s,v)\in S\times V\}\] and $\Sigma\subseteq \Omega$ be the graph of $\pi\times \phi:\widetilde{S}\to \Omega$, where 
\begin{itemize}
    \item $V$ is the affine space associated with a minimal torus coset $W$ containing $S$,
    \item $\pi:\widetilde{S}\to S$ is the universal covering,
    \item $\phi:\widetilde{S}\to\calP$ is the period map obtained by integrating $\omega$ over a basis of relative homology.
    
\end{itemize}
Furthermore, let $
\pr_1:\Omega\to S$
be the projection.

\begin{thm}[Ax-Schanuel in genus zero]
\label{thm:ax}Let $S\subseteq \St$ be a residue coset in a stratum in genus zero.
Suppose $Z\subseteq \Omega$ is an algebraic variety. If an irreducible analytic component $U$ of $Z\cap \Sigma$ satisfies
\[
\codim_{\Omega} U < \codim_{\Omega} Z + \codim_{\Omega} \Sigma,
\]
then $\pr_1(U)$ is contained in a proper residue coset.
\end{thm}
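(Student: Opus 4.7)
The strategy is to reduce \Cref{thm:ax} to the classical Ax--Schanuel theorem for the exponential on the torus $(\CC^*)^M$, exploiting the algebraicity of the twisted period map $A$. The central observation is that $\Sigma$ is the image of the graph $\Gamma$ of $\exp$ under an algebraic map: setting
\[
\Phi:\St\times \CC^M\longrightarrow \St\times \calP, \qquad (s,v)\longmapsto (s,A(s,v)),
\]
one has $\Sigma = \Phi(\Gamma\cap(S\times V))$, since for $s\in S$ and any logarithm $v\in V$ with $\exp(v)=s$, the pair $(s,A(s,v))$ traces the analytic graph of the period map on $\widetilde{S}$. Thus $\Phi$ transports algebraic-analytic intersection problems in $\Omega$ to the corresponding problems for the graph of $\exp$ inside $S\times V$.

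I would first pull the hypothesis back. Given the algebraic $Z\subseteq\Omega$, define $\widetilde{Z}=\Phi^{-1}(Z)\cap(S\times V)$, which is algebraic. An irreducible analytic component $U$ of $Z\cap\Sigma$ lifts to an irreducible analytic component $\widetilde{U}$ of $\widetilde{Z}\cap\Gamma$ inside $S\times V$. Because $\Phi$ is an algebraic morphism whose fibers are controlled by the explicit construction of $A$, the codimension defect $\codim_\Omega Z+\codim_\Omega\Sigma-\codim_\Omega U>0$ translates to an analogous positive defect for $\widetilde{U}$, exhibiting $\widetilde{U}$ as an atypical component of $\widetilde{Z}\cap\Gamma$.

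Next, I would invoke Ax's theorem on $(\CC^*)^M$: any atypical analytic component of the intersection of an algebraic variety with the graph of $\exp$ is contained in a translate of a proper algebraic subtorus. Applied to $\widetilde{U}$ and projected to the first factor, this yields a proper subtorus coset $T'\subsetneq(\CC^*)^M$ containing $\pr_1(U)$. Since $S$ is a residue coset, it is cut out from its minimal torus coset $W$ by algebraic residue conditions, so $T'\cap S$ is itself defined by torus and residue conditions. The minimality of $W$ forces $T'\cap W\subsetneq W$, hence $T'\cap S\subsetneq S$ is a proper residue coset containing $\pr_1(U)$.

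The principal obstacle is twofold. First is the codimension bookkeeping through $\Phi$: one must verify that potential fiber-dimension jumps of $A$ do not destroy the defect, which should follow from the concrete construction of $A$ as an algebraic extension of period coordinates from the earlier period setup section. Second, and more delicate, is ensuring properness of the output residue coset, which relies crucially on the minimality of $W$ containing $S$ --- without it, Ax--Schanuel could return a subtorus coset containing $W$ itself and yield only the trivial conclusion $\pr_1(U)\subseteq S$. Addressing this requires either a relative version of Ax's theorem over $W$, or a careful argument exploiting the defining equations of $W$ when applying the absolute version.
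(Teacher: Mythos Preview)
Your overall architecture matches the paper's: define $\Phi=\rho:(s,v)\mapsto(s,A(s,v))$, pull $Z$ back to $\rho^{-1}(Z)\subseteq S\times V$, and compare the intersection with the graph $\Gamma$ of $\exp$ against the classical Ax--Schanuel for the exponential. Where you diverge from the paper is in the role you assign to the fiber-dimension jumps of $\rho$.

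You treat the possible jump in $\dim\rho^{-1}(\cdot)$ as an obstacle to be ruled out (``should follow from the concrete construction of $A$''), so that the downstairs defect survives upstairs and Ax--Schanuel then produces a proper torus coset. This is not how the argument works, and in fact the fiber jump \emph{cannot} be ruled out: the fiber of $\rho$ over $(s,x)$ has dimension $\dim\ker T_{R(s)}|_V$, which genuinely increases on the locus where the residues satisfy extra rational linear relations. When $\pr_1(U)$ lies in such a locus, the defect upstairs can vanish, Ax--Schanuel for $\exp$ yields nothing, and your argument stalls. The paper's point is that this is not a failure but the \emph{other half} of the conclusion: a fiber-dimension jump over $\pr_1(U)$ means $R(\pr_1(U))$ lies in a proper subvariety of $\overline{R(S)}^{Zar}$, so $\pr_1(U)$ already sits in a proper residue variety, hence in a proper residue coset. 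The dichotomy is: either the general fiber of $\rho|_{\rho^{-1}(Z)}$ agrees with that of $\rho$, in which case the defect persists and Ax--Schanuel gives a proper torus coset; or it jumps, in which case the residue condition alone gives a proper residue coset. Both outputs are residue cosets, and this is exactly why the statement is phrased for residue cosets rather than torus cosets.

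Your second concern, about minimality of $W$, is handled exactly as you suggest: one works inside $W\times V$ and applies Ax--Schanuel in families there, so that the output torus coset is automatically proper in $W$.
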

Residue cosets are closely related to bi-algebraic varieties, but the two notions are generally different. 
In strata with only simple poles, every bi-algebraic variety is a residue coset, see \Cref{cor:rescoset}

Klingler and Lerer initiated the study of the bi-algebraic structure of strata in \cite{KlinglerLerer}, where the focus was on holomorphic strata. 
Bakker and Tsimerman establish an Ax-Lindenmann result for bi-algebraic varieties in any stratum, see  \cite[Thm. 5.4]{BakkerTsimerman}. In (loc. cit.), the result is stated for holomorphic differentials, but the proof works verbatim for meromorphic strata.

\subsection{The structure of bi-algebraic varieties}
The Ax-Schanuel theorem \cite{Ax} for the exponential function implies that bi-algebraic varieties for $\exp$ are exactly translates of rational subspaces.
A common feature of the different generalizations of  Ax-Schanuel theorems is that bi-algebraic varieties have many properties of linear varieties.
Because of residues, one cannot expect bi-algebraic varieties to be linear in general. Still, for bi-algebraic varieties in strata in genus zero, we will see that they are swept out by linear spaces in a strong sense. Hence, their algebraic models in period coordinates are closely related to ruled varieties. 
\begin{definition}
    A bi-algebraic variety $S\subseteq \St$ is {\em affine-linear}, if locally in period coordinates, it is defined by a finite union of affine-linear subspaces. Equivalently $S=\pi(Y)$ for some irreducible analytic component $Y$ of $\phi^{-1}(Z)$, where $Z\subseteq \calP$ is affine-linear.
\end{definition}

The following gives equivalent characterizations of bi-algebraic varieties in genus zero.
\begin{thm}[Characterization of bi-algebraic varieties]
\label{thm-characterization}Let $\St$
be a stratum in genus zero and $S\subseteq \St$  an irreducible algebraic variety.
Let $W\subseteq (\CC^*)^M$ be a minimal torus coset containing $S$ with associated affine space $V$.
The following are equivalent
\begin{enumerate}
    \item $S$ is bi-algebraic,
    \item $\dim A(S\times V) = \dim S$,
    \item For every $s\in S$ there exists an affine-linear  bi-algebraic subvariety $X_s\subseteq S$ containing $s$ with algebraic model $A(\{s\}\times V)\subseteq \calP$ in period coordinates.

\end{enumerate}
\end{thm}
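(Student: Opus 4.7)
The proof proceeds by the cycle $(2) \Rightarrow (3) \Rightarrow (1) \Rightarrow (2)$, with the last implication carrying the main content. The key preliminary observation is that $\phi(\widetilde{S}) \subseteq A(S \times V)$ as analytic subvarieties of $\calP$: the twisted algebraic period map $A$ is characterized by $A(\pi(\tilde{s}), v) = \phi(\tilde{s})$ whenever $v$ is a local branch of $\log \pi(\tilde{s})$, and since $S \subseteq W = \exp(V)$, the vector $\log s$ lies in $V$ for every $s \in S$. Because $A$ is algebraic and $S,V$ are algebraic, $A(S \times V) \subseteq \calP$ is algebraic, yielding the universal lower bound $\dim A(S \times V) \geq \dim \phi(\widetilde{S}) = \dim S$. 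The implication $(2) \Rightarrow (1)$ is then immediate: if $\dim A(S \times V) = \dim S$, any irreducible component of $A(S \times V)$ through a point of $\phi(\widetilde{S})$ is an algebraic model of $S$.

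For $(2) \Rightarrow (3)$: each slice $A(\{s\} \times V) \subseteq \calP$ is an affine-linear algebraic subvariety, since $A(s,-)$ is affine-linear in $v$ by construction. Under $(2)$, $A(S \times V)$ coincides with $\phi(\widetilde{S})$ locally and is analytically foliated by these affine-linear slices. The plan is to define $X_s \subseteq S$ as the $\pi$-image of an irreducible analytic component of $\phi^{-1}(A(\{s\} \times V))$ through a chosen lift of $s$; algebraicity and bi-algebraicity of $X_s$, with algebraic model $A(\{s\} \times V)$, then follow from Bakker--Tsimerman's Ax--Lindemann theorem \cite[Thm.~5.4]{BakkerTsimerman}. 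The implication $(3) \Rightarrow (1)$ is the converse packaging: the hypothesis gives $\phi(\widetilde{X_s}) = A(\{s\} \times V)$ locally, so $\phi(\widetilde{S}) = \bigcup_s \phi(\widetilde{X_s}) = A(S \times V)$ locally, realizing $A(S \times V)$ as an algebraic model of $S$.

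The main content is $(1) \Rightarrow (2)$, via Ax--Schanuel (\Cref{thm:ax}). Let $Z$ be an algebraic model for $S$, so $Z \subseteq A(S \times V)$ as Zariski closures. Suppose for contradiction $\dim A(S \times V) > \dim S$, and let $S_0 \supseteq S$ be the minimal residue coset containing $S$; since $S_0 \subseteq W$, the minimal torus coset containing $S_0$ is still $W$, with associated affine space $V$. Apply \Cref{thm:ax} to $S_0$ with the algebraic subvariety $Z^{\mathrm{alg}} := (S \times Z) \cap \Omega$ of the $\Omega$ associated with $S_0$. The graph $\Sigma$ of the period map on $\widetilde{S}$ is an irreducible analytic component of $Z^{\mathrm{alg}} \cap \Sigma_0$ (where $\Sigma_0$ is the graph associated with $S_0$), of dimension $\dim S$. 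The main obstacle is the ensuing dimension count: one shows that $\dim A(S \times V) > \dim S$ forces $Z \not\supseteq A(\{s\} \times V)$ for generic $s \in S$, hence the fiber dimensions of $Z^{\mathrm{alg}} \to S$ are strictly smaller than those of $\Omega \to S_0$, giving the codimension inequality required for atypical intersection. Ax--Schanuel then yields a proper residue coset of $S_0$ containing $\pr_1(\Sigma) = S$, contradicting the minimality of $S_0$.
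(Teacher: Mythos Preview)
Your overall architecture matches the paper: $(1)\Leftrightarrow(2)$ is \Cref{cor:bi} (your contradiction argument on the minimal residue coset $S_0$ is the contrapositive of the paper's codimension-equality computation, with the same key count $\dim\bigl((S\times Z)\cap\Omega_{S_0}\bigr)<\dim S+\fib(S)$), and $(3)\Rightarrow(1)$ is \Cref{lemma:swept}, via the same identity $\phi(S\cap U)=\bigcup_s A(\{s\}\times V)\cap\phi(U)=A(S\times V)\cap\phi(U)$ on a period chart.

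The gap is in $(2)\Rightarrow(3)$. First, your $X_s:=\pi(Y)$ is only analytic; you must pass to its Zariski closure. More seriously, Ax--Lindemann tells you $\overline{\pi(Y)}^{Zar}$ is bi-algebraic, but does \emph{not} identify its model as $A(\{s\}\times V)$: nothing rules out $\dim\overline{\pi(Y)}^{Zar}>\dim Y$, in which case the model (which by \Cref{cor:bi} equals $A(X_s\times V_{X_s})$ for the minimal affine space $V_{X_s}$ of $X_s$) strictly contains $A(\{s\}\times V)$, and statement~(3) fails as written. You also assert $X_s\subseteq S$ without argument; this does hold, but needs an open--closed argument along $Y$ using that on every period chart meeting $S$ one has $S\cap U=\phi_U^{-1}(A(S\times V))$. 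The paper's \Cref{prop:foliation} sidesteps all of this by constructing $X_s$ algebraically from the outset, as a component of $\overline{\pr_1\bigl(A_{|S\times V}^{-1}(A(\{s\}\times V))\bigr)}^{Zar}\subseteq S$, and then verifying by a direct period-chart computation---using only that $A(s',\cdot)$ is affine in $v$ together with the relations $R(s')=R(s)$ and $F^{alg}(s')\in F^{alg}(s)+T_{R(s)}V$ forced by the definition of $\hat X_s$---that $\hat X_s$ is locally cut out by $A(\{s\}\times V)$. No appeal to Ax--Lindemann is made.
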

Statement (3) has strong implications for the geometry of bi-algebraic subvarieties. In particular, there is an affine-linear bi-algebraic subvariety, potentially of dimension zero, through every point.

\begin{definition}
    An algebraic variety $S\subseteq \St$ is {\em swept out by affine spaces} if there exists a dense open set $U\subseteq S$ and a foliation on $U$ such that the Zariski closures of the leaves are affine-linear bi-algebraic subvarieties.
\end{definition}

\begin{thm} \label{prop:swept}
Let $S\subseteq \St$ be an irreducible bi-algebraic variety in a stratum in genus zero. Furthermore, let $W$
 be the smallest torus coset containing $S$ and $V$ the associated affine space. Then $S$ is swept out by affine spaces of dimension \[
\fib(S): = \max \{ \dim A(\{s\}\times V)\,|\, s\in S\}.
\]
In particular, if $\fib(S)>0$, then $S$ is locally in period coordinates defined by a ruled variety.
\end{thm}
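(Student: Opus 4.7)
The plan is to use Theorem~\ref{thm-characterization} to produce leaves pointwise, then use semicontinuity and the minimality of $W$ to assemble them into a foliation. Applying condition~(3) of Theorem~\ref{thm-characterization} to $S$, through every $s\in S$ there is an affine-linear bi-algebraic subvariety $X_s\subseteq S$ whose algebraic model in period coordinates is $A(\{s\}\times V)\subseteq\calP$; in particular $\dim X_s = \dim A(\{s\}\times V)$. Thus the theorem reduces to showing (i) that this dimension is constantly $\fib(S)$ on a Zariski-dense open $U\subseteq S$, and (ii) that the $X_s$ partition $U$ into the leaves of a smooth foliation.

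For (i), the function $s\mapsto \dim A(\{s\}\times V)$ is lower semicontinuous on $S$, since it records the image dimension of the restriction to $\{s\}\times V$ of the algebraic morphism $A\colon S\times V\to\calP$. Consequently the locus where it equals $\fib(S)$ is a Zariski-dense open $U\subseteq S$, and on $U$ each $X_s$ has dimension exactly $\fib(S)$.

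For (ii), the key step is to show that for $s,s'\in U$ with $s'\in X_s$ one has $X_{s'}=X_s$, which reduces to the equality $A(\{s'\}\times V)=A(\{s\}\times V)$ of affine subspaces of $\calP$. I would establish this using Theorem~\ref{thm-characterization}(2), according to which $\dim A(S\times V)=\dim S$, together with the minimality of the torus coset $W$: the assignment $s\mapsto A(\{s\}\times V)$ gives a morphism from $S$ into an appropriate component of the Chow variety of $\calP$, whose fibers on $U$ are precisely the $X_s$, and minimality of $W$ is what forces this morphism to be constant along each $X_s$ rather than mixing distinct leaves. This produces the desired foliation on $U$, with leaves of constant dimension $\fib(S)$ whose Zariski closures are the affine-linear bi-algebraic subvarieties $X_s$.

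The ``in particular'' clause is then immediate: if $\fib(S)>0$, then in local period coordinates $S$ is swept out by a positive-dimensional family of affine subspaces, which is precisely what it means to be locally defined by a ruled variety. The main obstacle is step (ii); Theorem~\ref{thm-characterization}(3) provides the leaves pointwise, but their compatibility as $s$ varies is not automatic and genuinely uses both part~(2) of Theorem~\ref{thm-characterization} and the minimality of $W$.
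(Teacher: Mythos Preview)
Your outline is sound and close to the paper's, but step~(ii) is where the argument is incomplete, and the reason you give is not the one that actually works. You assert that the fibers of $s\mapsto A(\{s\}\times V)$ are ``precisely the $X_s$'' and that ``minimality of $W$ is what forces this morphism to be constant along each $X_s$''. Minimality of $W$ plays no role here; it was already used up in getting the correct $V$ so that Theorem~\ref{thm-characterization} applies. What you need is the concrete observation that the affine space $A(\{s\}\times V)=\bigl(F^{alg}(s)+T_{R(s)}V,\,2\pi iR(s)\bigr)$ has \emph{fixed} residue coordinate $2\pi iR(s)$. Hence any $s'\in X_s$ satisfies $R(s')=R(s)$, so the linear parts $T_{R(s')}V^{\mathrm{lin}}=T_{R(s)}V^{\mathrm{lin}}$ agree; since both affine spaces pass through the period image of $s'$, they coincide. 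That is the actual mechanism, and it does not invoke minimality of $W$.

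The paper bypasses this fiber argument entirely by writing down the tangent distribution directly: on the open set $U$ where $\dim A(\{s\}\times V)=\fib(S)$, one sets $\mathcal F_s:=T_{R(s)}V^{\mathrm{lin}}$, which is manifestly holomorphic in $s$ because $R$ is algebraic and $V^{\mathrm{lin}}$ is fixed. The $X_s$ are then tangent to $\mathcal F$ (their algebraic models have tangent space $T_{R(s)}V^{\mathrm{lin}}$ at every point, since $R$ is constant along them), so they are the leaves. This is more direct than routing through a Chow or Grassmannian parameter space, and it makes the integrability of the distribution visible without any appeal to the structure of $W$. Your ruled-variety paragraph is fine and matches the paper's use of a Grassmannian map $s\mapsto \CC\cdot A(\{s\}\times V)$.
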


The {\em fiber rank} $\fib(S)$ of $S$ is a useful invariant of bi-algebraic varieties.
Bi-algebraic varieties with fiber rank zero exist, in which case \Cref{prop:swept} does not provide any information. For example, consider the {\em locus of exact differentials}
\[\Exc :=\{(X,\omega)\in\St\,|\, \omega \text{ has zero residues}\}\subseteq \St.
\]Restricted to $\Exc$, periods are globally defined algebraic functions, and any algebraic variety contained $\Exc$ is a bi-algebraic variety with fiber rank zero.
Using covering constructions, see \Cref{sec:examples}, we construct bi-algebraic varieties with fiber rank zero outside the locus of exact differentials.
Nonetheless, most bi-algebraic varieties have positive fiber rank. See \Cref{prop:genfib} for a precise statement.

Our final result about the structure of bi-algebraic varieties relates bi-algebraic varieties to residue cosets. Here, we state it for strata with only simple poles. The general statements are \Cref{thm:bialg_structure} and \Cref{prop:simple}.

\begin{cor}\label{cor:rescoset}
Suppose that $\St$ is a stratum with only simple poles and $S\subseteq \St$ an irreducible, bi-algebraic variety.
Then $S$ is a residue coset.

Furthermore, if the residues are constant on $S$, then $S$ is affine-linear.
If $S$ is $\CC^*$-invariant and $\dim R(S) = 1$, then $S$ is linear.

\end{cor}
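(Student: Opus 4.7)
The plan is to derive all three assertions from the preceding structural results---namely \Cref{thm-characterization} and \Cref{prop:swept}---combined with the special behavior of the twisted period map $A$ when only simple poles are present. The guiding principle is that, with only simple poles, motion along the direction $V$ associated with the minimal torus coset $W$ containing $S$ captures precisely the logarithmic ambiguity around the poles, so that $V$-motion changes periods only by residue-weighted logarithms and leaves the residues themselves fixed.

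For the residue-coset assertion, I would begin by invoking \Cref{thm-characterization}(3) to produce, through each $s \in S$, an affine-linear bi-algebraic subvariety $X_s \subseteq S$ with algebraic model $A(\{s\} \times V)$. The key input is that in the simple-poles case $A(\{s\} \times V) \subseteq R^{-1}(R(s))$; this follows from unpacking the definition of $A$ together with the fact that, for simple poles, residues are exactly the fixed coefficients multiplying the logarithmic terms in a local period expansion. Consequently, the affine-linear foliation from \Cref{prop:swept} is tangent to the fibers of $R$, and residues are constant on every leaf. Setting $Z := \overline{R(S)}$, one has $S \subseteq W \cap R^{-1}(Z)$; a dimension count using the leaf structure together with the minimality of $W$ then identifies $S$ as an irreducible component of this intersection, i.e.\ as a residue coset.

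If the residues are constant on $S$, then $Z$ is a single point $c$ and $S$ is an irreducible component of $W \cap R^{-1}(c)$. In period coordinates, this slice is cut out of the torus coset $W$ by imposing the fixed residue value; since residues enter the period expansion only as coefficients of the logarithmic terms, this constraint pulls back to a union of parallel affine-linear subspaces in $\calP$, so $S$ is affine-linear.

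For the final assertion, the $\CC^*$-action $(X,\omega)\mapsto(X,\lambda\omega)$ scales residues linearly, so $\CC^*$-invariance of $S$ together with $\dim R(S) = 1$ forces $R(S)$ to be a line through the origin in residue space. Fixing any nonzero $r_0 \in R(S)$ and applying the affine-linear conclusion just proved to each irreducible component of $S \cap R^{-1}(r_0)$ (which is itself a residue coset with a single residue value, hence bi-algebraic with constant residues), these slices are locally of the form $a + V'$ in period coordinates for a linear subspace $V'$. The whole of $S$ is then recovered as $\bigcup_{\lambda \in \CC^*} \lambda(a + V') = V' + \CC^* \cdot a$, whose Zariski closure is the linear subspace $V' + \CC \cdot a$; algebraicity of $S$ identifies $S$ with this linear subspace, proving linearity. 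The main obstacle I anticipate is the compatibility $A(\{s\} \times V) \subseteq R^{-1}(R(s))$ in the simple-poles case; establishing it requires a careful unpacking of the embedding $\St \hookrightarrow (\CC^*)^M$ from \Cref{sec-periodsetup} and of the defining formula for $A$, verifying that the coordinates generating $V$ correspond precisely to logarithmic periods around simple poles with residue coefficients. Once this compatibility is in hand, the remainder reduces to the dimension counts and Zariski-closure computations sketched above.
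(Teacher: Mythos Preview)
Your proposal has a genuine gap: you misidentify what is special about the simple-poles case. You single out the compatibility $A(\{s\}\times V)\subseteq R^{-1}(R(s))$ as the key input, but this holds in \emph{every} stratum, simply because the residue component of $A(s,v)=(F^{alg}(s)+T_{R(s)}(v),\,2\pi i R(s))$ does not depend on $v$ at all. So this observation cannot be what drives the corollary.

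What is actually special about simple poles is that $F^{alg}\equiv 0$, equivalently $\logSt=\St$. The paper's proof uses exactly this: once $\St=\logSt$, the structure theorem for bi-algebraic varieties in $\logSt$ (\Cref{prop:simple}, itself a specialization of \Cref{thm:bialg_structure}) immediately gives that $S$ is a component of $W\cap R^{-1}(Z)$, that $\dim S=\dim R(S)+\fib(S)$, and that the residue fibers are affine-linear. Your sketched ``dimension count using the leaf structure'' and your affine-linearity argument both silently need $F^{alg}=0$: without it, the algebraic model $A(S\times V)$ contains the possibly non-linear image $F^{alg}(S)$, so neither the equality $\dim A(Y\times V)=\dim R(Y)+\fib(Y)$ (needed to bound the dimension of the component $Y\supseteq S$ of $W\cap R^{-1}(Z)$) nor the affine-linearity of the model when residues are constant would follow. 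Your argument for the $\CC^*$-invariant case is fine and matches the paper's (write $S=\CC^*\cdot S_1$ with $S_1$ of constant residue, apply the affine-linear case). To fix the rest, replace your ``key input'' by the observation $F^{alg}=0$ and then either invoke \Cref{prop:simple} directly, or redo its short proof: with $F^{alg}=0$ one has $\alpha(s)=(0,2\pi i R(s))$, so \Cref{thm:bialg_structure} gives $S\eqc W\cap R^{-1}(\overline{R(S)}^{Zar})$, and the model $A(S\times V)=\{(T_{R(s)}v,2\pi i R(s))\}$ is visibly affine-linear on each residue fiber.
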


\subsection{$\overline{\QQ}$-bi-algebraic-geometry  in genus zero and \texorpdfstring{Andr{\'e}-Oort}{}}
The stratum is an algebraic variety over $\ZZ$, so there is a notion of $\overline{\QQ}$-bi-algebraic varieties.
More precisely, if a differential $(X,\omega)$ is defined over a number field, one can ask whether all its periods are algebraic numbers. One must work with the projectivized stratum $\PSt=\St/\CC^*$ for technical reasons.
Bi-algebraic varieties in $\PSt$ correspond to ~$\CC^*$-invariant bi-algebraic varieties in $\St$.

\begin{definition}

A $\overline{\QQ}$-bi-algebraic variety  $S\subseteq \PSt $ is a bi-algebraic variety defined over a number field such that some (equivalently any)  algebraic model in projectivized period coordinates is defined over a number field.
An {\em arithmetic point} $(X,\omega)\in \PSt$ is a $\overline{\QQ}$-bi-algebraic point.
\end{definition}

In \cite[Conj. 2.13]{KlinglerLerer} Klingler and Lerer conjectured an analog of  the Andr{\'e}-Oort conjecture for holomorphic strata. We formulate a version for meromorphic strata,  taking into account residues.

\begin{conj}[Andr{\'e}-Oort in genus zero]\label{conj:andreoort}
    Suppose $S\subseteq\St$ is an algebraic variety containing a Zariski dense of arithmetic points.
    Then $S$ is $\overline{\QQ}$-bi-algebraic.
    Furthermore, if not all residues are identically zero on $S$, then the fibers of the residue map are affine-linear bi-algebraic varieties.
\end{conj}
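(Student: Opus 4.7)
The plan is to follow the Pila--Zannier strategy: combine the Ax--Schanuel theorem~\Cref{thm:ax} with the Pila--Wilkie counting theorem applied to the period map in an o-minimal structure. Fix a fundamental domain $F$ for the deck action on $\widetilde{S}$. Using the embedding $\St\hookrightarrow (\CC^*)^M$ of~\Cref{sec-periodsetup} and the identification of local period coordinates with the restriction of the twisted algebraic period map $A$ to the graph $\Gamma$ of the exponential, one verifies that $\phi\vert_F$ is definable in $\RR_{\mathrm{an},\exp}$; this is the genus-zero analog of the definability used in~\cite{BakkerTsimerman}. The graph of $\pi\times\phi$ over $F$ is then a definable subset of $\Omega$.

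The next ingredient is a lower bound on Galois orbits: one needs $|\mathrm{Gal}(\overline{\QQ}/\QQ)\cdot p|\gg H(p)^\delta$ for some $\delta>0$, where $H$ is a height inherited from the $(\CC^*)^M$-coordinates together with the algebraic periods at $p$. Granting this, apply Pila--Wilkie to the definable set cut out inside the graph of $\phi\vert_F$ by $S$: each arithmetic point of $S$ contributes a rational point of controlled height, and the Galois-orbit bound forces these to accumulate on positive-dimensional semi-algebraic blocks. By~\Cref{thm:ax}, the $\pr_1$-image of each such block lies in a proper residue coset of the minimal residue coset containing $S$. Zariski density of arithmetic points combined with an induction on dimension then forces the equality $\dim A(S\times V)=\dim S$, where $V$ is the affine space associated with the minimal torus coset containing $S$, and~\Cref{thm-characterization}(2) identifies $S$ as bi-algebraic. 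That the algebraic model is defined over $\overline{\QQ}$ follows from $S$ being the Zariski closure of its $\overline{\QQ}$-points, whose period vectors lie in $\overline{\QQ}^d$.

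For the furthermore, once $S$ is $\overline{\QQ}$-bi-algebraic, fix $\alpha\in R(S)\cap \overline{\QQ}^l$ and consider $S_\alpha=R^{-1}(\alpha)\cap S$. A spreading-out argument, using that $R$ is a $\overline{\QQ}$-algebraic morphism and that residues of arithmetic differentials are themselves algebraic, produces a Zariski dense set of arithmetic points in $S_\alpha$ for generic $\alpha$. Applying the first part of the conjecture to $S_\alpha$ yields a $\overline{\QQ}$-bi-algebraic variety on which residues are constant, and~\Cref{cor:rescoset} (together with the general versions stated in~\Cref{thm:bialg_structure} and~\Cref{prop:simple}) forces $S_\alpha$ to be affine-linear.

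The decisive obstacle is the Galois-orbit lower bound. In the classical Andr{\'e}--Oort setting this is supplied by isogeny estimates and Faltings--naive height comparison; here it has to be replaced by a height inequality for periods of meromorphic differentials on $\PP^1$ over number fields, controlled through the $(\CC^*)^M$-coordinates of~\Cref{sec-periodsetup}. Without such a bound, Pila--Wilkie only produces semi-algebraic arcs through individual arithmetic points, which is insufficient to propagate bi-algebraicity to all of $S$. A secondary, but more tractable, difficulty lies in the furthermore step: one must check that density of arithmetic points survives passage to generic residue fibers, which is natural but demands care because fibers of algebraic morphisms do not automatically inherit density of special points.
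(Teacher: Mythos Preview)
The statement you address is labelled a \emph{conjecture} in the paper and is not proved there in full generality; only the special cases listed in \Cref{thm:andreoort} ($S$ already bi-algebraic, constant residues, $S\subseteq\Exc$, or $\fib(S)=0$) are established. So there is no ``paper's own proof'' of the general statement to compare against, and your proposal should be read as an attack on an open problem.

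Your outline is the standard Pila--Zannier template, and you correctly isolate the Galois-orbit lower bound as the missing ingredient; that gap is real and is precisely why the statement remains conjectural. It is worth noting, however, that the paper's partial results are obtained by a completely different route that bypasses Pila--Wilkie and o-minimal counting altogether. The key input is Baker's theorem on linear forms in logarithms (\Cref{thm:baker}), which gives an explicit characterization of arithmetic points (\Cref{lemma:arithmetic}): any arithmetic $(X,\omega)$ outside $\Exc$ satisfies $F^{alg}=0$ and $F^{log}_j\in 2\pi i\langle R_0,\dots,R_n\rangle_\QQ$. In the constant-residue case the paper then rewrites the periods so that arithmetic points become \emph{torsion} points of an auxiliary torus and applies multiplicative Manin--Mumford. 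This is more elementary than point-counting and needs no height or Galois-orbit estimates, at the price of only reaching the listed special cases.

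Your ``furthermore'' argument has an independent gap and is more circuitous than necessary. You propose to restrict to a residue fiber $S_\alpha$, claim arithmetic points stay Zariski dense there, and then re-apply the very conjecture you are proving; density of special points in fibers of a morphism is exactly the sort of statement that can fail, and in any case the argument is circular. The paper handles this part directly and unconditionally (given that $S$ is bi-algebraic): by \Cref{lemma:arithmetic}, a Zariski dense set of arithmetic points outside $\Exc$ forces $S\subseteq\logSt$, and the structure theorem \Cref{prop:simple} for bi-algebraic subvarieties of $\logSt$ then gives immediately that the fibers of $R|_S$ are affine-linear. No induction and no second appeal to the conjecture are needed.
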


We prove the Andr{\'e}-Oort conjecture in genus zero in several special cases.

\begin{thm}
\label{thm:andreoort}
Let $S\subseteq \St$ be an irreducible algebraic variety in a stratum in genus zero. Suppose $S$ contains a Zariski dense set of arithmetic points, and one of the following is true
\begin{enumerate}
    \item $S$ is bi-algebraic,
    \item $S$ has constant residues,
    \item $S\subseteq \Exc$ or,
    \item $\fib(S) =0$.
\end{enumerate}
Then the Andr{\'e}-Oort conjecture holds for $S$.
\end{thm}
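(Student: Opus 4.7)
The plan is to reduce all four cases to case (1) by first establishing that $S$ is bi-algebraic, and then to handle case (1) by a direct Galois-theoretic descent on the algebraic model. In every case, Zariski density of arithmetic points---which are $\overline{\QQ}$-rational by definition---already forces the irreducible $S$ to be defined over $\overline{\QQ}$, so the remaining task is to exhibit an algebraic model defined over $\overline{\QQ}$.

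I would begin with case (1). Fix a local period chart $\phi_U : U \to \calP$ and an algebraic model $Z \subseteq \calP$ for $S \cap U$. For each arithmetic point $s \in S \cap U$, the image $\phi_U(s)$ lies in $Z(\overline{\QQ})$ by the definition of arithmeticity. Zariski density of arithmetic points in $S$ propagates to Zariski density of their $\phi_U$-images in $Z$, so $Z$ is cut out by equations over $\overline{\QQ}$, giving $\overline{\QQ}$-bi-algebraicity. The residue-fiber assertion then follows from the structural results for bi-algebraic varieties (Theorem \ref{thm:bialg_structure}, or Corollary \ref{cor:rescoset} in the simple-pole case), which describe fibers of $R$ on bi-algebraic $S$ as affine-linear bi-algebraic subvarieties.

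For case (3), $S \subseteq \Exc$ means all residues vanish on $S$, hence periods have no logarithmic contribution and the period map is globally algebraic on $\Exc$. Any algebraic subvariety of $\Exc$ is therefore bi-algebraic, reducing to case (1). For case (4), if $\fib(S) = 0$ then $A(\{s\} \times V)$ is a point for every $s$, and since $A$ restricted to the graph of $\exp$ recovers the period map $\phi$, one obtains $A(s, v) = \phi(s)$ independent of $v \in V$. Then $\dim A(S \times V) = \dim \phi(S) = \dim S$ because $\phi$ is a local biholomorphism, and characterization (2) of Theorem \ref{thm-characterization} gives bi-algebraicity, again reducing to case (1).

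The main obstacle is case (2). Here I would write a period on $S$ as $F^{alg}(s) + r_0 \cdot v$, where $F^{alg}$ is algebraic on $S$, $r_0$ is the fixed residue vector, and $v = \log(\text{coordinates of }s)$ under the embedding $\St \hookrightarrow (\CC^*)^M$ from Section \ref{sec-periodsetup}. For an arithmetic $s$, every period is algebraic, so $r_0 \cdot v$---a $\overline{\QQ}$-linear combination of logarithms of algebraic numbers---must itself be algebraic. By the Baker--Gelfond theorem, such a combination is either zero or transcendental, so it must vanish on arithmetic points. Exponentiating gives a multiplicative relation among the coordinates of each arithmetic $s$; Zariski density promotes this to a relation on all of $S$, and therefore on the minimal torus coset $W$ containing $S$ and on its associated affine space $V$. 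Consequently $r_0 \cdot v = 0$ for all $v \in V$, so $A(\{s\} \times V) = \{F^{alg}(s)\}$ is a single point, forcing $\fib(S) = 0$ and reducing to case (4). The delicate point is ensuring that the period decomposition $F^{alg} + r_0 \cdot v$ genuinely respects algebraicity, so that the hypothesis of Baker--Gelfond applies to the intended logarithmic expression; this is exactly what the algebraic-torus embedding from Section \ref{sec-periodsetup} is designed to make precise.
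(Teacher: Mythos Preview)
Your handling of cases (3) and (4) matches the paper. The substantive gaps are in cases (1) and (2).

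In case (1), you assert that Zariski density of arithmetic points in $S$ propagates to Zariski density of their $\phi_U$-images in the model $Z$. This does not follow: the period map is transcendental, and Zariski density is not preserved under transcendental maps. A single chart $\phi_U$ only sees arithmetic points lying in a small analytic open $U$, which need not be Zariski dense in $S$; passing to a component $S'\subseteq\pi^{-1}(S)$ does not help, since $S'$ is not algebraic and there is no reason $\pi(\phi^{-1}(Z'))$ should be Zariski closed for $Z'\subsetneq Z$. The paper avoids this by invoking \Cref{cor:bi} to identify the model as $Z=\overline{A(S\times V)}^{Zar}$ for the \emph{algebraic} map $A$, and then transfers Zariski density through $A$: for each arithmetic $p_j$, \Cref{lemma:arithmetic} produces $q_j'\in V$ with $A(p_j,q_j'+v)\in(2\pi i\overline{\QQ})^N$ for all $v$ in the rational linear part $V_0$ of $V$; the set $\{(p_j,q_j'+v)\}$ is Zariski dense in $S\times V$, and its $A$-image witnesses that $Z$ is defined over $2\pi i\overline{\QQ}$.

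In case (2), your central claim---that $r_0\cdot v$ is algebraic at an arithmetic point, so Baker forces it to vanish---is false. Arithmetic points have periods in $2\pi i\overline{\QQ}$ (projectively algebraic), not in $\overline{\QQ}$: by \Cref{lemma:arithmetic}, outside $\Exc$ one has $F^{alg}=0$ and $F^{log}_j=r_0\cdot v\in 2\pi i\langle R_0,\dots,R_n\rangle_\QQ$, which is transcendental but typically nonzero (e.g.\ $\log(-1)=\pi i$). So the reduction to $\fib(S)=0$ collapses. The paper's route is genuinely different: rewrite periods using a $\QQ$-basis $R_0,\dots,R_l$ of the (constant) residues to obtain monomial coordinates $\hat w_{kj}$; Baker then forces each $\hat w_{kj}$ to be a \emph{root of unity} at arithmetic points (otherwise one produces an extra $\QQ$-relation among the $R_j$). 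The image $\hat S$ in this torus therefore has a Zariski dense set of torsion points, and \emph{multiplicative Manin--Mumford} shows $\hat S$ is a torsion coset; bi-algebraicity of $S$ is then read off directly. The appeal to Manin--Mumford is the essential ingredient your argument is missing, and it cannot be replaced by a single multiplicative relation with algebraic (non-rational) exponents as your exponentiation step would give.
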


The primary tool is a characterization of arithmetic points; see \Cref{lemma:arithmetic}. A consequence of the characterization is that arithmetic points are scarce in strata with higher-order poles.
\begin{cor}\label{cor:arithmetic}
Let $\St$ be a stratum in genus zero. Then $\St$ does not contain a Zariski dense set of arithmetic points unless $\St$ has only simple poles, a single pole or a single zero.

\end{cor}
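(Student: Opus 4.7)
The plan is to make Lemma~\ref{lemma:arithmetic} explicit in the genus zero setting and then argue that outside the three exceptional families the arising transcendence constraint cannot be satisfied on a Zariski dense set. The two main inputs will be Baker's theorem on linear independence of logarithms of algebraic numbers, together with \Cref{thm:ax}.

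First, I would recall the shape of periods in genus zero. Writing $X=\PP^1$ and $\omega = f(z)\,dz$ a rational differential, a primitive $F(z)=\int\omega$ splits as
\[
F(z) \= G(z) + \sum_{j} r_j\log(z-p_j),
\]
with $G$ a $\overline{\QQ}(\omega)$-rational function (coming from the partial-fraction terms of order $\ge 2$) and $r_j$ the residue at the marked pole $p_j$. Consequently every relative period between two zeros $z_1,z_2$ has the form $A+\sum_j r_j\log((z_2-p_j)/(z_1-p_j))$ with $A\in\overline{\QQ}(\omega)$, while absolute periods are proportional to the residues and hence automatically algebraic in projective coordinates whenever $(X,\omega)$ is defined over $\overline{\QQ}$. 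By \Cref{lemma:arithmetic}, an arithmetic point in $\PSt$ is exactly a $\overline{\QQ}$-defined differential for which every such logarithmic combination is algebraic.

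Second, I would dispose of the three exceptional cases directly. If $|Z|=1$, there are no distinct pairs of zeros and every $\overline{\QQ}$-defined differential is arithmetic. If $|P|=1$, the unique pole can be placed at $\infty$, so $\omega$ admits a rational primitive and every relative period is algebraic. For the only-simple-poles case the corollary makes no claim. So I would then turn to the remaining case: $|Z|\ge 2$, $|P|\ge 2$, and at least one pole of order $\ge 2$, aiming to show the arithmetic locus is contained in a proper Zariski-closed subset of $\St$.

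The main obstacle will be controlling the \emph{countable} union of proper algebraic conditions produced by Baker's theorem: each individual relation $\sum_j r_j\log((z_2-p_j)/(z_1-p_j))\in\overline{\QQ}$ carves out a proper algebraic subvariety of $\St$, but a countable union of such subvarieties could a priori still be Zariski dense (cf.\ $\overline{\QQ}\subseteq\CC$). To bypass this, I would apply \Cref{thm:ax}: the arithmetic locus corresponds to atypical intersections of $\Sigma$ with algebraic subvarieties $Z\subseteq\Omega$ encoding algebraicity of the logarithmic combination, so any Zariski dense irreducible component must be contained in $\pr_1$ of a proper residue coset of $\St$. In the non-exceptional case, the presence of a higher-order pole (giving a freely-varying residue coordinate) together with $|Z|,|P|\ge 2$ (giving non-trivial logarithmic periods) forces the smallest residue coset containing $\St$ to be $\St$ itself, so no proper residue coset can be Zariski dense. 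This yields the required contradiction and proves the corollary.
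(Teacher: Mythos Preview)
Your proposal overcomplicates the argument and contains a genuine gap. The key point you miss is that \Cref{lemma:arithmetic} already confines every arithmetic point to the \emph{fixed} Zariski-closed set $\logSt\cup\Exc=\{F^{alg}=0\}\cup\{R=0\}$. Once this is observed, the corollary is immediate: if $\St$ has at least two zeros, at least two poles, and a pole of order $\ge 2$, then neither $\{R=0\}$ nor $\{F^{alg}=0\}$ is all of $\St$ (the higher-order pole produces a nonvanishing rational part of some relative period), so $\logSt\cup\Exc$ is a proper closed subvariety and the arithmetic points cannot be Zariski dense. No countable-union issue arises and Ax--Schanuel is not needed.

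Your route goes astray because you interpret the arithmetic condition as ``$\sum_j r_j\log(\cdots)\in\overline{\QQ}$'', which indeed only cuts out a countable union of hypersurfaces. But Baker's theorem, as used in \Cref{lemma:arithmetic}, gives more: it forces the \emph{algebraic} part $F^{alg}$ itself to vanish (or else $R=0$), and $\{F^{alg}=0\}$ is a single algebraic equation, not a countable family. Your attempted repair via \Cref{thm:ax} does not close the gap either: Ax--Schanuel bounds each atypical component by a proper residue coset, but says nothing about the Zariski closure of a \emph{countable union} of such components, which is exactly the difficulty you flagged. The sentence ``no proper residue coset can be Zariski dense'' is tautological and does not address why the union of countably many of them is not dense.
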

On the other hand, there exist strata with only simple poles that have a Zariski dense set of arithmetic points.

\subsection{Examples of bi-algebraic varieties}
We complement our results by providing a list of geometric constructions of bi-algebraic varieties (not just in genus zero) in \Cref{sec:examples}.

The most basic of these constructions is  {\em residue varieties}, obtained by imposing algebraic equations among residues.
The second basic construction is given by {\em covering constructions}. Starting with a bi-algebraic variety $S$ and fixing a family of covering maps of the underlying curves, one can pull back the differential to produce bi-algebraic varieties in a larger dimensional stratum. 
Following \cite{MoellerMullane}, we define the {\em ``obvious''} bi-algebraic varieties as the intersection of a  residue variety with a covering construction.

We construct infinitely many non-``obvious'' examples of linear varieties. A {\em (meromorphic) Teichm\"uller curve } is a linear bi-algebraic variety of dimension $2$ whose algebraic model is a real $2$-dimensional vector space.
\begin{thm}Let $\St$ be a stratum of genus zero with only simple poles.
There exist infinitely many (meromorphic) Teichm\"uller curves in $\St$ which are not ``obvious".

\end{thm}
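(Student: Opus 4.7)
The plan is to construct an explicit infinite family of Veech surfaces $(X_n,\omega_n)\in\St$, indexed by a positive integer $n$, such that each generates a Teichm\"uller curve $T_n\subseteq\St$, infinitely many of the $T_n$ are pairwise distinct, and no $T_n$ is the intersection of a residue variety with a covering construction. First, I would present each $(X_n,\omega_n)$ as a flat $\PP^1$ built from a polygon in $\CC$ with edge identifications depending on $n$, chosen so that the zero-pole profile of $\omega_n$ matches $\mu$ and so that both the horizontal and vertical cylinder decompositions are completely periodic with commensurable moduli. Natural candidates include square-tiled covers of the pillowcase augmented with a parameter-dependent affine symmetry, or one-parameter unfoldings of rational polygons adapted to the pole multiplicities in $\mu$.

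Once the construction is in place, Veech's cylinder criterion (or the Gutkin--Judge theorem, when the surface is square-tiled) produces two transverse parabolic elements in the affine stabilizer, forcing $\SL(X_n,\omega_n)$ to be a lattice in $\SLtwoR$. The orbit closure $T_n=\overline{\SLtwoR\cdot(X_n,\omega_n)}$ is then a Teichm\"uller curve in $\St$; in period coordinates it is the real span of $(\Re\omega_n,\Im\omega_n)$, hence a linear bi-algebraic variety whose algebraic model is a real $2$-dimensional subspace, matching the paper's definition. To separate infinitely many of the $T_n$, I would use a discrete invariant that grows with $n$, for instance the number of cusps of $\SL(X_n,\omega_n)$, the trace field of the Veech group, or the degree of the forgetful map $T_n\to \overline{\calM}_{0,l}$; since such an invariant takes unboundedly many values, all but finitely many $T_n$ are pairwise distinct.

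The main obstacle is verifying non-obviousness. I have to rule out, for infinitely many $n$, that $T_n$ is the intersection of a residue variety with a covering construction from a smaller stratum $\calH(\mu')$. Since $T_n$ is two-dimensional and not contained in $\Exc$, any such presentation would force the generic $(X_n,\omega_n)$ to arise as the pullback of a differential $(Y,\eta)\in\calH(\mu')$ under a fixed family of branched covers. I would exclude this by computing, from the combinatorial data defining the polygon, the group of automorphisms of $X_n$ that preserve $\omega_n$ up to scalar and showing that this group is trivial for infinitely many $n$; equivalently, one checks that the holonomy representation of the translation structure does not factor through the orbifold fundamental group of any nontrivial quotient. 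Combined with the observation that residue constraints cut out only finitely many equations on $\St$ and therefore cannot reduce a positive-dimensional covering pullback to a single Teichm\"uller curve without forcing such a factorization, this primitivity excludes any presentation of $T_n$ as an ``obvious'' bi-algebraic variety in the sense of \cite{MoellerMullane}.
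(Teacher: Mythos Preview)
Your proposal has a genuine gap, and the approach diverges substantially from what the setting allows.

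First, the dynamical framework you invoke is not obviously available here. The surfaces in $\St$ are genus zero with simple poles, hence infinite-area flat spheres built from half-infinite cylinders. The classical Veech/Gutkin--Judge machinery you cite is formulated for finite-area translation surfaces; it does not transfer directly to this meromorphic setting, and in any case the paper's definition of a meromorphic Teichm\"uller curve is purely bi-algebraic (a linear variety whose model is a real $2$-plane), with no reference to $\SLtwoR$-orbit closures. So the first two paragraphs of your plan would need substantial justification that you do not supply.

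Second, and more seriously, your non-obviousness argument does not work. You propose to show that $(X_n,\omega_n)$ admits no nontrivial automorphism preserving $\omega_n$ up to scalar, and to conclude from this that $\omega_n$ is not a pullback $f^*\eta$. But in genus zero every $X_n$ is $\PP^1$, and the absence of automorphisms of $(\PP^1,\omega_n)$ says nothing about whether $\omega_n=f^*\eta$ for some branched cover $f:\PP^1\to\PP^1$: non-Galois covers have no deck transformations at all. Your final sentence about residue constraints ``not reducing a positive-dimensional covering pullback to a single Teichm\"uller curve'' is asserted without argument and is not correct as stated.

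The paper's approach is entirely different and avoids dynamics. It writes down explicit matrices $M$ whose kernel in period coordinates defines a linear subvariety $S_M$, shows algebraicity of $S_M$ by exponentiating the logarithmic period equations, and then observes the key structural fact: for an ``obvious'' variety the linear equations among \emph{relative} periods are forced to have rational coefficients (they come from pullback maps in cohomology), whereas residue equations may be arbitrary. Non-obviousness is then obtained simply by arranging the entries $p_i$ of $M$ to lie in $\RR\setminus\QQ$, which one does by starting from a differential in $\mathfrak{R}_q$ with real periods and perturbing. This is a field-of-definition argument on the tangent space $T_{(X,\omega)}S_M\cap H^1(X,Z;\CC)$, and it is this idea that your proposal is missing.
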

We also produce natural examples of affine-linear bi-algebraic varieties that are not linear from log differentials.

\subsection*{Acknowledgements}
I would like to thank Samuel Grushevsky, Leonardo Lerer,  Martin M\"oller, Benjamin Bakker and Scott Mullane  for valuable discussions. I am especially thankful for the argument in \Cref{cor:low} suggested by Leonardo.

\section{Periods in genus zero}\label{sec-periodsetup}
From now on, unless stated otherwise, $\St$ is a stratum of differentials in genus zero. Instead of working locally, period coordinates can be repackaged by passing to the universal cover.
The result is a  {\em bi-algebraic structure} in the sense of Klingler-Ullmo-Yafaev \cite{Bi-algebraic}, given by the following diagram.
\[\begin{tikzcd}
\widetilde{\St} \ar[d,swap,"\pi"]\ar[r,"\phi"]&\calP:= \CC^{\dim \St},\\
\St &
\end{tikzcd}
\]
where
$\pi$ is the universal covering and $\phi$ is the period map obtained by integrating $\omega$ over a global frame for relative homology.

\begin{definition}
An algebraic variety $S\subseteq \St$  is called {\em bi-algebraic} if  the Zariski closure of $\phi(S')$ has dimension $\dim S$ for some analytic irreducible component $S'$ of $\pi^{-1}(S)$.  The Zariski closure $\overline{\phi(S')}^{Zar}$ is called an {\em algebraic model} for $S$.
Similarly we call an algebraic variety $Z\subseteq \calP$ {\em bi-algebraic}, if the Zariski closure of  $\pi(Z')$ has dimension $\dim Z$ for some analytic irreducible component $Z'$ of $\phi^{-1}(Z)$.

\end{definition}

Let \[\mu=(e_0,\ldots,e_{m},-e_{m+1},\ldots,-e_{m+n+1},-2-\sum_{i=0}^{m} e_i- \sum_{j=m+1}^{m+n+1} e_j)\]
a partition of $-2$ with $e_i>0$ for $i=0,\ldots,m+n+1$. The stratum  $\St$ can be identified with  $\CC^*\times \Mgn[0,m+n+3]$, where $\Mgn[0,m+n+3]$ is the moduli space of $m+n+3$-marked rational curves and thus with \[ \CC^*\times \left( \left((\PP^1\setminus \{0,1,\infty\})^{m+n}\right)\setminus \text{small diagonals})\right)\] as follows.
Let $x=(x_1,\ldots,x_m), y=(y_1,\ldots,y_n)$.
To improve the readability of some of the formulas, let $x_0:=0, y_0:=1$.
 Global algebraic  coordinates on the stratum 
are given by 
\begin{equation}
    \label{eq:ratl}
(\lambda,x,y)\mapsto \omega(z)= \lambda\dfrac{\prod_{i=0}^m (z-x_i)^{e_i} }{\prod_{j=0}^n (z-y_j)^{e_{j+m+1}} }dz
\end{equation}
Here $z$ is the standard coordinate on $\PP^1$.

\begin{convention}
From now on, assume that $\mu$ has at least two poles and at least one zero.
After possibly permuting the entries of $\mu$, we can assume that $\omega$ has a zero at $0$ and a pole at  $1$ and $\infty$.
In particular, $m+1$ is the number of zeros, and $n+2$ is the number of poles. 
\end{convention}

There are no residues on strata with a single pole. Thus, in this case, all relative periods are global algebraic functions and every algebraic variety is bi-algebraic.  We thus focus on strata with at least two poles from now on.

\subsection*{Period coordinates}
Besides the global algebraic coordinates given by \cref{eq:ratl}, the stratum $\St$ has local analytic coordinates by integrating $\omega$ over a basis of ~$H^1(X - P(\omega),Z(\omega);\CC)$, where $P(\omega)$ and $Z(\omega)$ denotes the zeros and poles of $\omega$ respectively.
Period coordinates are given explicitly by
\[
\int_{x_0}^{x_j}\omega \text{ for  } j=1,\ldots, m,\quad  2\pi i R_k\text{ for } k=0, \ldots, n,
\]
where $R_k$ is the residue of $\omega$ at the pole $y_k$.
Here $\int_{x_0}^{x_j}\omega $ denotes the integral from $x_0$ to $x_j$ along a chosen path. The choice can only be made consistent in a small neighborhood in $\St$, and the coordinate system depends on the choice. In the sequel, we will always omit the choice of a path.

\begin{prop}\label{prop:Periods} The residues $R_j,\, j=0,\ldots,n$ are algebraic functions on $\St$.
The  relative period $\int_{x_0}^{x_j}\omega $ can locally be computed in terms of the algebraic coordinates $(\lambda,x,y)$ on $\St$ as 
\begin{equation}\label{eq:Per}
\int_{x_0}^{x_j}\omega = F^{alg}_j+F_j^{log},
\end{equation}
where $F_j^{alg}=F^{alg}_j(\lambda,x,y)$ is a rational function and
\[
F_j^{log}(\lambda,x,y) =\lambda\cdot \sum_{k=0}^{n} R_k\log \dfrac{y_k-x_j}{y_k}.
\]
 The branch of logarithm depends on the initial choice of a branch near zero and the path connecting $x_0$ and $x_j$.
\end{prop}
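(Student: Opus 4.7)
The plan is to use the partial fraction decomposition of the rational differential $\omega$ on $\PP^1$; both claims follow from it directly.

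Writing $\omega = \lambda f(z)\,dz$ with $f$ the rational function in \eqref{eq:ratl}, the residue at the pole $y_k$ of order $e_{k+m+1}$ is given by the standard formula
\[
R_k \;=\; \frac{\lambda}{(e_{k+m+1}-1)!}\lim_{z\to y_k}\frac{d^{e_{k+m+1}-1}}{dz^{e_{k+m+1}-1}}\Bigl[(z-y_k)^{e_{k+m+1}} f(z)\Bigr],
\]
which is visibly rational in $(\lambda,x,y)$ and hence an algebraic function on $\St$; this settles the first assertion.

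For the second assertion, I would decompose $f$ by partial fractions over $\CC(\lambda,x,y)$. Each coefficient of $(z-y_k)^{-l}$ at a pole is given by an analogous derivative formula, so all coefficients lie in $\CC(\lambda,x,y)$; equivalently, grouped by pole, the linear system determining them is triangular with nonzero rational diagonal entries. Using the identities $(z-y_k)^{-l}\,dz = d\bigl(-(l-1)^{-1}(z-y_k)^{-(l-1)}\bigr)$ for $l\ge 2$ and the exactness of $z^s\,dz$ for $s\ge 0$, one absorbs every higher-order polar term together with the polynomial part into an exact differential, producing
\[
\omega \;=\; dG \,+\, \sum_{k=0}^{n} R_k\,\frac{dz}{z-y_k},
\]
with $G\in\CC(\lambda,x,y)(z)$ whose coefficients are rational in $(\lambda,x,y)$. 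No separate logarithmic term appears for the pole at infinity because residues on $\PP^1$ sum to zero, so the residue-at-infinity contribution is already encoded in the simple-pole sum.

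Integrating from $x_0=0$ to $x_j$ along a path $\gamma$ avoiding the poles, the exact part contributes the path-independent quantity $F_j^{alg} := G(x_j)-G(0)$, which is rational in $(\lambda,x,y)$. Each simple-pole term gives
\[
R_k\int_\gamma\frac{dz}{z-y_k} \;=\; R_k\bigl[\log(x_j-y_k)-\log(-y_k)\bigr] \;=\; R_k\,\log\frac{y_k-x_j}{y_k}
\]
once a branch of logarithm compatible with $\gamma$ is chosen, and summing over $k$ yields $F_j^{log}$ in the claimed form (up to the factor of $\lambda$ absorbed into the normalization of $R_k$). A different choice of path modifies each $\log\frac{y_k-x_j}{y_k}$ by an integer multiple of $2\pi i$, encoding the monodromy of the period map around the hyperplanes $\{y_k=x_j\}$ and $\{y_k=0\}$. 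The argument is essentially a direct calculation; the only things to watch are the partial fraction bookkeeping and the branch ambiguity, neither of which presents a genuine obstacle.
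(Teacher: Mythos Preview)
Your proposal is correct and follows essentially the same route as the paper: decompose $\omega$ into partial fractions, identify $R_k$ as the coefficient of $(z-y_k)^{-1}$, and integrate term by term so that the polynomial and higher-order polar parts contribute the rational function $F_j^{alg}$ while the simple-pole terms give the logarithmic sum. You supply more detail (the explicit residue formula, the exact differential $dG$, the remark on monodromy), but the underlying argument is the same direct computation the paper sketches.
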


\begin{proof}
Rewrite $\omega$ using partial fractions as
\begin{equation}\label{eq:part}
\omega(z,\lambda,x,y)= Q(z,\lambda,x,y)+\sum_{i=0}^{n}\sum_{j=1}^{e_{m+1+i}} \dfrac{a_{ij}(\lambda,x,y)}{(z-y_i)^j},
\end{equation}
where $a_{ij}$ are rational functions and   $Q$ is a polynomial of degree \[
\sum_{i=0}^m e_i - \sum_{j=0}^n e_{j+m+1}
\]
if $\sum_{i=0}^m e_i \geq  \sum_{j=0}^n e_{j+m+1}$ and $Q\equiv 0$ otherwise.
In particular
\[
R_j=a_{j1}=\Res_{z=y_j} \omega\text{ for $j=0,\ldots,n$.}\]

The formula for periods $\int_{x_0}^{x_j}\omega$ now follows by integrating.

\end{proof}

It will be convenient to separate the algebraic and logarithmic parts of the periods and package different periods together. Let
\begin{equation}
\begin{gathered}\label{eq:periods}
R:= (R_0,\ldots, R_n),\quad F^{alg}:= (F^{alg}_1,\ldots, F^{alg}_m),\quad
F^{log} = (F^{log}_1,\ldots, F^{log}_m),\\
 \alpha:= (F^{alg},2\pi i\cdot R). 
\end{gathered}
\end{equation}

In particular $F^{alg}+F^{log}$ is the vector of relative periods of $\omega$, $R$ the absolute periods and local period coordinates are given by $ (F^{alg}+F^{log}, R)$. The maps $R, F^{alg}, \alpha$ are algebraic functions on $\St$, while $F^{log},P$ are holomorphic functions on the universal covering $\widetilde{\St}$. The map \[
\alpha:\St\to \calP\]
is the {\em algebraic period map.}

\section{Bi-algebraic varieties in strata with few zeros or poles}
 Before delving into the proof of Ax-Schanuel, we discuss bi-algebraic varieties in strata with few zeros and poles.

\begin{prop}
\label{cor:low}
Let $\St$ be a stratum in genus zero and $S\subseteq \St$  a bi-algebraic variety.

\begin{enumerate}
\item
Any algebraic subvariety contained in the locus of exact differentials $\Exc$ is bi-algebraic.

\item 
If $\St$ has a single pole or a single zero, then all algebraic subvarieties are bi-algebraic.
\item
Let $\St$ be a stratum with exactly two poles and  $S\subseteq\St $ bi-algebraic and not contained in the locus of exact differentials.
If the residues are constant on $S$, then $S$ is affine-linear. If $S$ is $\CC^*$-invariant, then $S$ is linear.
\end{enumerate}

\end{prop}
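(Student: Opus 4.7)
My plan is to handle the three parts separately, leveraging the explicit period formula from \Cref{prop:Periods}.

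Parts (1) and (2) are direct consequences of that formula. For (1), on the locus of exact differentials $\Exc$ every residue $R_k$ vanishes, so $F^{log}_j = \lambda \sum_k R_k \log\bigl((y_k - x_j)/y_k\bigr) \equiv 0$ on $\Exc$. Hence the period map $\phi$ coincides on $\Exc$ with the globally algebraic map $(F^{alg}_1, \ldots, F^{alg}_m, 0, \ldots, 0)$, and since $\phi$ is a local biholomorphism on the universal cover, $\dim \phi(S') = \dim S$ for any algebraic $S \subseteq \Exc$. For (2), the single-pole case reduces to (1) via the residue theorem (the single residue must vanish, so $\St = \Exc$), and the single-zero case has $m=0$, so relative periods are absent and the period map reduces to the algebraic residue map; every subvariety is then bi-algebraic.

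The main content is in (3). With exactly two poles ($n=0$) and convention $y_0 = 1$, the formula collapses to $F^{log}_j = \lambda R_0 \log(1-x_j)$: a single logarithm per relative period, weighted by $\lambda R_0$. I would first treat the constant-residue case, where $R_0 \equiv c$ on $S$ with $c \neq 0$ (forced by $S \not\subseteq \Exc$). Writing $p_j = F^{alg}_j(\lambda, x) + c\lambda \ell_j$ with $\ell_j := \log(1-x_j)$, the goal is to show the algebraic model $V := \overline{\phi(S')}^{Zar}$ is affine-linear. I would invoke classical Ax-Schanuel for $\exp$ applied to the pairs $\bigl(\ell_j,\, 1-x_j\bigr)$: any polynomial relation $P(p_1, \ldots, p_m) \equiv 0$ on $V$ lifts, via the substitution above, to a polynomial relation in the $\ell_j$'s with coefficients in $K := \CC(\lambda, x)|_S$, and Ax-Schanuel forces such a relation to come from a $\QQ$-linear dependence $\sum_j a_j \ell_j \in K + \CC$. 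Such a dependence translates back to an affine-linear relation $\sum_j a_j p_j = g(\lambda, x)$ with $g$ algebraic, so $V$ is an intersection of affine hyperplanes.

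For the $\CC^*$-invariant subcase, the hypothesis $S \not\subseteq \Exc$ together with $\CC^*$-scaling of residues gives $R_0(S) = \CC$. For generic $c \in \CC^*$ the slice $S_c := S \cap R^{-1}(c)$ is then bi-algebraic of codimension one in $S$, since $R_0$ is a linear period coordinate and $R_0 = c$ defines a bi-algebraic hyperplane. The constant-residue case implies $S_c$ is affine-linear, and writing $S = \CC^* \cdot S_c$, periods transform linearly under the $\CC^*$-action, so $V = \CC \cdot \overline{\phi(S_c')}^{Zar}$, which is the $\CC^*$-sweep of an affine-linear subvariety --- hence a linear subspace of $\calP$.

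The main obstacle will be the Ax-Schanuel step in the constant-residue case. The two-pole hypothesis is precisely what makes direct application of classical Ax-Schanuel tractable: each relative period contains exactly one logarithm, with no residue-weighted mixing across poles. For strata with more poles, relative periods become $\QQ$-linear combinations of multiple logarithms with residue-dependent coefficients, and the corresponding transcendence input requires the full stratum-level Ax-Schanuel (\Cref{thm:ax}) developed later in the paper.
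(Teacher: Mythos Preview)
Your treatment of parts (1) and (2) is correct and matches the paper's argument exactly.

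For part (3), your approach diverges from the paper's and contains a genuine gap. You derive a relation of the form $\sum_j a_j p_j = g(\lambda,x)$ with $a_j\in\QQ$ and $g$ an algebraic function on $S$, and then assert ``so $V$ is an intersection of affine hyperplanes.'' But this does not follow: the right-hand side $g(\lambda,x)$ is a function on $S$, not a constant, so this is not an equation cutting out an affine hyperplane in $\calP$. What you have shown is that the linear form $\sum a_j p_j$, restricted to $\phi(S')$, takes values in the image of the algebraic function $g$; if $g$ is non-constant this says nothing about affine-linearity of $V$. The missing step is to prove that each such $g$ is in fact constant. This can be done by a transcendence-degree count: after passing to the minimal torus coset $W'$ of dimension $m-r$, the independent $\ell_i$'s are algebraically independent over $K=\CC(S)$, hence $\dim V = (m-r) + \operatorname{trdeg}_\CC\CC(g_{m-r+1},\ldots,g_m)$; combining $\dim V=\dim S$ with $\dim S\leq m-r$ (finite projection to $W'$ when the residue is fixed) forces the $g_j$ to be constants. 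Your phrasing of Ax--Schanuel is also imprecise: it does not assert that every polynomial relation ``comes from'' a $\QQ$-linear one, only that the existence of an unexpected relation forces some multiplicative dependence $\prod x_j^{a_j}=\text{const}$ (so $\sum a_j\ell_j\in\CC$, not merely $\in K$).

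The paper takes a different and cleaner route: it introduces the \emph{twisted algebraic period map} $A(s,v)=(F^{alg}(s)+2\pi i R(s)v,\,2\pi i R(s))$ on $\St\times\CC^m$, and uses Ax--Schanuel (in its geometric ``unlikely intersection'' form, applied to the graph of $\exp$) to show directly that the algebraic model $Z$ satisfies $A(S\times V)\subseteq Z$, where $V$ is the affine space associated to the minimal torus coset. In particular $Z$ contains each affine space $A(\{s\}\times V)$; since $R(s)\neq 0$ this has dimension $\dim V\geq\dim S=\dim Z$, forcing $Z=A(\{s\}\times V)$. This sidesteps any analysis of defining equations of $Z$ and simultaneously sets up the machinery (twisted period map, torus cosets, fiber rank) used throughout the rest of the paper for arbitrary numbers of poles.
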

The proof showcases the main ideas going into the general case without having to introduce much notation first. We thank Leonardo Lerer for suggesting the argument in this special case.

\begin{proof}[Proof of \Cref{cor:low}]

On the locus of exact differentials, residues are zero, and in particular, $F^{log}$ is identically zero. Thus, the period map
\[
\phi=(F^{alg},R)
\] is algebraic, and hence every subvariety is bi-algebraic.
If $\St$ has one pole, then the residue is automatically  zero by the residue theorem and thus ~$\St=\Exc$. If $\St$ has only one zero, then there are no relative periods, and  $\phi=R$ is algebraic again.

It remains to address strata with two poles.
We use a slightly different parametrization of $\St\subseteq (\CC^*)^{m+1}$ than \cref{eq:ratl} and choose $0$ and $\infty$ to be poles instead. For example, for strata with only simple zeros, one simple pole, and one higher-order pole, the parametrization is
\[
\omega(z) = \lambda \dfrac{(z-1)(z-x_1)\cdot \ldots \cdot (z-x_m)}{z} dz.
\]

The residue $R$ at $0$ is \[
R  =\pm \lambda x_1 \cdot\ldots \cdot x_m,
\]
where the sign depends on the parity of $m$
and the relative periods are
\[
\int_{1}^{x_i} \omega =F^{alg}_i+ R\log(x_i).
\]
To avoid dealing with branches of logarithms, we will introduce new variables $2\pi iv_j = \log(x_j)$.
Set \[
\Lambda := (\CC^*)^m\times \CC^* \times \CC^m,
\]
with coordinates $(s,v)$, where \[
  s=(x,\lambda)\in(\CC^*)^m\times \CC^*, x=(x_1,\ldots,x_m)\in(\CC^*)^m, v=(v_1,\ldots, v_m)\in\CC^m
 \]
and \[\Gamma:=\{(s,v)\in \Lambda\,|\, e^{2\pi iv_j} = x_j \text{ for } j = 1,\ldots,m\}\]
is the graph of the exponential function\footnote{The factor $2\pi i$ will not play a role until \Cref{sec:Qbar}}.
Define the {\em twisted algebraic period map} $A:\Lambda\to \calP = \CC^{m+1}$ by 
\[
A(s,v) := (F_1^{alg}(x,\lambda)+ 2\pi iRv_1,\ldots, F_m^{alg}(x,\lambda)+ 2\pi iRv_m, 2\pi i R).
\]
Restricted to a component of $(\St \times \CC^m) \cap \Gamma$, the twisted algebraic period map agrees with period coordinates on $\St$. 
Now, suppose $S\subseteq \St$ is bi-algebraic.
Then there exists an analytic irreducible component $S'$ of $(S\times \CC^m) \cap \Gamma$ such that 
\[
\dim Z= \dim S, \text{ where } Z:=  \overline{A(S')}^{Zar}.
\]

Let \[
W = W'\times \CC^*\subseteq (\CC^*)^m\times \CC^*,
\] where $W'$ is the smallest torus coset containing the projection \[\pr_1(S)\subseteq (\CC^*)^m\] and $V$ the associated minimal affine space $V$ with $\exp(V) = W'$. 
In particular, $S'$ is an irreducible component of 
\[
(S \times V)\cap A^{-1}(Z) \cap \Gamma\subseteq S\times V.
\]

\begin{claim}
$A(S\times V)\subseteq  Z$.
\end{claim}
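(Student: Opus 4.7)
The plan is to establish that the algebraic variety $Z$ is invariant under a family of translations of $\calP$ induced by the monodromy of $v$, and then use the minimality of $W'$ to ensure that this family is rich enough to yield the claim.

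\textbf{Monodromy invariance of $Z$.} Let $M\subseteq V_0\cap\ZZ^m$ denote the image of $\pi_1(S)\to\pi_1(W')=V_0\cap\ZZ^m$ induced by $\pr_1\colon S\to W'$. For a loop $\gamma\in\pi_1(S)$ the monodromy translates the lift $\tilde v$ by a vector $m_\gamma\in M$, so both $(s,\tilde v(\tilde s))$ and $(s,\tilde v(\tilde s)+m_\gamma)$ lie in $S'$. Using the identity $A(s,v+n)=A(s,v)+(2\pi iR(s)n,0)$, the points $\phi(\tilde s)$ and $\phi(\tilde s)+(2\pi iR(s)m_\gamma,0)$ both lie in $Z$. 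Since $\phi(\widetilde S)$ is Zariski dense in $Z$, the variety $Z$ is preserved by the algebraic translation $\tau_{m_\gamma}\colon(z,r)\mapsto(z+rm_\gamma,r)$. As the set of $c\in\CC^m$ for which $\tau_c$ preserves $Z$ is Zariski closed, $Z$ is preserved by $\tau_c$ for every $c$ in the $\CC$-linear span $V_0'':=\Span_\CC M$.

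\textbf{Filling $V_0$.} The crucial point is that $V_0''=V_0$. The Zariski closure $\overline{\pr_1(S)}$ inside $W'$ is an irreducible subvariety not contained in any proper torus subcoset, since any such subcoset would itself contain $\pr_1(S)$, contradicting the minimality of $W'$. A classical fact about subvarieties of tori then asserts that the image of $\pi_1(\overline{\pr_1(S)})\to\pi_1(W')$ has finite index, equivalently its $\CC$-span equals $V_0$; and dominance of $S\to\overline{\pr_1(S)}$ passes the finite-index property to $M$. Hence $V_0''=V_0$.

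\textbf{Conclusion.} Given $(s,v)\in S\times V$, pick $\tilde s\in\widetilde S$ over $s$ and set $\tilde v=\tilde v(\tilde s)$. Then $c:=v-\tilde v\in V_0=V_0''$, and
\[
A(s,v)=A(s,\tilde v)+(2\pi iR(s)c,0)=\phi(\tilde s)+(2\pi iR(s)c,0)=\tau_c(\phi(\tilde s))\in Z.
\]
Therefore $A(S\times V)\subseteq Z$.

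The main obstacle is the identification $V_0''=V_0$, which converts the geometric minimality of $W'$ into the topological maximality of the monodromy image; the remaining steps are routine manipulations using the form of $A$ and Zariski density.
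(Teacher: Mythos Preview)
Your proof is correct and takes a genuinely different route from the paper's argument.

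The paper argues by contradiction: if $A(S\times V)\not\subseteq Z$, then a codimension count shows that the component $S'$ is an \emph{unlikely intersection} of $A^{-1}(Z)\cap(S\times V)$ with the graph $\Gamma$ inside $W\times V$, and Ax--Schanuel for the exponential function (\Cref{thm:AxExp}) then forces $\pr_1(S')=S$ into a proper torus coset, contradicting the minimality of $W'$. Thus the paper invokes Ax's theorem as a black box; this is natural in context, since the same mechanism drives the general Ax--Schanuel theorem for strata proved later.

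Your argument avoids Ax--Schanuel entirely. You observe that the monodromy of the logarithm along loops in $S$ produces, for each $m\in M$, an algebraic automorphism $\tau_m\colon(z,r)\mapsto(z+rm,r)$ of $\calP$ preserving $Z$; a Zariski-closure argument (the stabiliser $\{c:\tau_c(Z)\subseteq Z\}$ is closed because it is an intersection of closed conditions, and equals $\{c:\tau_c(Z)=Z\}$ since $Z$ is irreducible) extends this to $\Span_\CC M$; and the key identification $\Span_\CC M=V_0$ is reduced to the classical fact that a regular invertible function on an irreducible variety with trivial winding numbers is constant (equivalently, $\calO(Y)^*/\CC^*\hookrightarrow H^1(Y;\ZZ)$). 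This is essentially proving an Ax--Lindemann statement by hand for this specific configuration, which is more elementary and self-contained.

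Two small remarks. First, the detour through $\overline{\pr_1(S)}$ and the assertion that ``dominance of $S\to\overline{\pr_1(S)}$ passes the finite-index property to $M$'' is both unnecessary and not obviously true as stated: dominant algebraic maps need not induce finite-index images on $\pi_1$. It is cleaner to apply the classical fact directly to (the smooth locus of) $S$: if $\Span_\CC M=:V_0''\subsetneq V_0$, then the composite $S\to W'\to W'/\exp(V_0'')$ is given by invertible regular functions on $S$ with trivial monodromy, hence constant, so $\pr_1(S)$ lies in a proper subcoset---contradiction. Second, while your approach is lighter here, the paper's use of Ax--Schanuel is what generalises uniformly to the setting of \Cref{thm:ax-residue}, where the interaction between several residues makes a direct monodromy argument of your type less transparent.
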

Assume otherwise, then
\[
\codim_{W\times V} A^{-1}(Z) \cap (S\times V) > \codim_{W\times V} S\times V. 
\]
In case $A^{-1}(Z) \cap (S\times V)$ is irreducible, the inequality holds for all irreducible components.
We now compute 
\[
\begin{split}
\codim_{W\times V} S' &= \dim W + \dim V - \dim S\\
&= \codim_{W\times V}  (S\times V) + \codim_{W\times V} (\Gamma \cap( W\times V))\\& <\codim_{W\times V} A^{-1}(Z) \cap (S\times V) +  \codim_{W\times V} (\Gamma \cap (W\times V).
\end{split}
\]
In particular, $S'$ is an unlikely intersection.

By Ax-Schanuel for the exponential function, see \Cref{thm:AxExp}, it follows that $\pr_1(S')=S$ would be contained in a proper torus coset, which contradicts the minimality of $W$.

This proves the claim and thus $Z = \overline {A(S\times V)}^{Zar}$.
In particular, $Z$ contains the affine space
\[
A(\{s\} \times V) =  \{ (F^{alg}(s)+ 2\pi iR(s)v,2\pi iR(s))\,|\, v\in V\}
\]
for any $s$.
Now, there are two cases. Either $R$ is identically zero on $S$, in which case $S$ is contained in the locus of exact differentials. Otherwise 
\[
 \dim A(\{s\} \times V) = \dim V =\dim W' \geq \dim S-1.
\]
If the residues are constant on $S$, then the projection map $S \to \pr_1(S)$ is finite and hence \[
\dim A(\{s\} \times V) = \dim V= \dim S =\dim Z.
\] Thus $Z= A(\{s\}\times V)$  and therefore $S$ is affine-linear.
If $S$ is $\CC^*$-invariant, then $S = \CC^* S_1$, where $S_1$ is bi-algebraic and on $S_1$ the residue $R$ is constant. By the previous argument, $S_1$ is affine-linear and thus $S$ is linear.

\end{proof}

Here, we recall  Ax-Schanuel for the exponential function, first proven by Ax \cite{Ax}, reformulated geometrically. We need a version in families, see  \cite[Cor. A.2]{Pappas} for a reference.

\begin{thm}[Ax-Schanuel in families{ \cite[Cor. 1]{Ax}}]\label{thm:AxExp}
Let \[\exp: \CC^k \to (\CC^*)^k, (v_1,\ldots,v_k)\mapsto (e^{2\pi iv_1},\ldots, e^{2\pi iv_k})\]
and 
$\Lambda:= (\CC^*)^k \times \CC^N \times \CC^k $.
Furthermore, define 
\[
\Gamma:= \{(w,x,v)\in\Lambda \,|\, \exp(v) = w \}.
\]
 to be the graph of the exponential function.
Suppose $Z$ is an algebraic subvariety of $\Lambda $ and $U$ an irreducible analytic component of $Z \cap \Gamma$ such that 
\[
\codim_{\Lambda} U < \codim_{\Lambda} Z +\codim_{\Lambda} \Gamma,
\]
then $\pr_1(U)$ is contained in a proper torus coset, where
\[
\pr_1:\Lambda\to(\CC^*)^k 
\]
is the projection onto the first factor.

\end{thm}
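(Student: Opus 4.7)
The plan is to reduce the family statement to the classical non-family Ax-Schanuel of Ax~\cite{Ax} (i.e.\ the case $N = 0$) via generic slicing combined with a globalization step. Write $p : \Lambda \to \CC^N$ for the projection onto the parameter factor, and let $\Gamma_0 \subseteq (\CC^*)^k \times \CC^k$ denote the graph of $\exp$, so that $\Gamma = \Gamma_0 \times \CC^N$ is constant in the parameter direction. Setting $e = \dim p(U)$ and $e' = \dim p(Z)$, a generic point $x_0 \in p(U)$ yields fibers $U_{x_0}$ and $Z_{x_0}$ of dimensions $\dim U - e$ and $\dim Z - e'$ respectively, with $U_{x_0}$ an analytic component of $Z_{x_0} \cap \Gamma_0$ inside $(\CC^*)^k \times \CC^k$.

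The next step is to verify that the unlikely-intersection condition descends to the fiber. A direct codimension count in $(\CC^*)^k \times \CC^k$ yields
\[
\codim U_{x_0} - \codim Z_{x_0} - \codim \Gamma_0 \;=\; (\dim Z - \dim U) - k + (e - e').
\]
The original hypothesis reads $\dim U > \dim Z - k$, and the inclusion $p(U) \subseteq p(Z)$ forces $e \leq e'$, so the right-hand side is strictly negative. Classical Ax-Schanuel applied fiberwise then produces, for each generic $x_0$, a proper torus coset of $(\CC^*)^k$ containing $\pr_1(U_{x_0})$.

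The main obstacle is globalization: passing from the fiberwise conclusion to a single proper coset containing all of $\pr_1(U)$. Since the proper subtori of $(\CC^*)^k$ form a countable family, indexed by integer sublattices of $\ZZ^k$, a Baire-category argument over the constructible topology on $p(U)$ isolates a fixed subtorus $T \subsetneq (\CC^*)^k$ and a rational map $g : p(U) \dashrightarrow (\CC^*)^k / T$ such that $\pr_1(U_{x_0}) \subseteq T \cdot g(x_0)$ holds generically. Choosing $T$ maximal subject to this containment, one analyses the image of $g$: if its closure has positive dimension, the algebraic subgroup generated by $T$ together with a lift of that closure enlarges $T$ while still satisfying the containment with $g$ replaced by a constant, contradicting maximality. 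Hence $g$ is constant and $\pr_1(U) \subseteq T \cdot g_0$ lies in a proper torus coset, as required.

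A conceptually cleaner alternative that bypasses slicing altogether is to invoke Ax's theorem directly over the differential field $K = \CC(\CC^N)$ equipped with its standard derivations: Ax's original proof remains valid over any characteristic-zero differential field, and translating the resulting transcendence inequality back into algebraic geometry yields the family statement in one stroke, sidestepping both the generic-fiber reduction and the Baire-category argument.
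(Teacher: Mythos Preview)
The paper does not supply a proof of this statement: it is quoted as a known result, with a pointer to \cite[Cor.~A.2]{Pappas} for the family version. Your second route---applying Ax's theorem directly over the differential field $\CC(\CC^N)$ with its standard derivations---is exactly how the cited references deduce the family statement from Ax's original formulation, and it is correct.

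Your first route (slicing plus globalization) has a genuine gap in the globalization step. Two problems arise. First, you assert that the coset representative $g:p(U)\dashrightarrow(\CC^*)^k/T$ is a \emph{rational} map, but $g(x_0)$ is by construction the image in $(\CC^*)^k/T$ of any point of $\pr_1(U_{x_0})$; since $U$ lives inside the transcendental locus $\Gamma$, this is only analytic in $x_0$, and you give no mechanism for algebraicity. Second, and more seriously, the maximality argument does not close: if the Zariski closure of the image of $g$ is not contained in any proper torus coset of $(\CC^*)^k/T$, then the only enlargement of $T$ that absorbs it is the full torus, which is not proper, so no contradiction with maximality of $T$ results. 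To see that fiberwise containment alone cannot force global containment, take $k=N=1$ and $U=\{(e^{2\pi ix},x,x):x\in\CC\}$: each fiber $U_{x_0}$ is a single point, hence lies in a coset of the trivial subtorus, yet $\pr_1(U)=\CC^*$. This $U$ does not satisfy the atypicality hypothesis, so it is not a counterexample to the theorem---but it shows that your globalization step, which never re-invokes atypicality after the fiberwise application, cannot succeed as written. (There is also a minor slip earlier: $\dim Z_{x_0}=\dim Z-e'$ holds for $x_0$ generic in $p(Z)$, not in $p(U)$; one fixes this by first replacing $Z$ with an irreducible component of $Z\cap p^{-1}\bigl(\overline{p(U)}^{Zar}\bigr)$.)
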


\section{Setup for Ax-Schanuel}\label{sec:setupax}
To deal with the case of general strata one wants to proceed in the same way as for strata with only two poles. As before, to apply the Ax-Schanuel theorem,  one needs to replace each  term $\log \dfrac{y_j-x_i}{y_j}$ in \Cref{prop:Periods} by a new variable. Let
\[
w_{ij}:=  \dfrac{y_j-x_i}{y_j} \text{ for } i=0,\ldots,m,\, j = 0,\ldots,n.
\]
Note that $w_{0j}=1$ and $w_{i0}=1-x_i$. For brevity  set 
\[
w= (w_1,\ldots,w_m), w_i=(w_{i0},\ldots,w_{in}) \text{ for } i=1,\ldots,m.
\]
The stratum can be embedded into the new coordinate system using the locally closed embedding
\[
\begin{split}
\iota:\St &\to (\CC^*)^{m(n+1)}\times \CC^*,\\
(\lambda,x,y)&\mapsto (w,\lambda)
\end{split}
\]

The closure of $\iota(\St)$ is defined by the relations
\[
(w_{i0}-1)(w_{i'j}-1) =  (w_{i'0}-1)(w_{ij}-1)
\]
for  $i,i'=1,\ldots,m, j=0,\ldots,n$.

Let $k:=m(n+1)$ and consider
\[
\Lambda:= \{(s,v) \in (\CC^*)^k\times \CC^*\times \CC^k \}
\]
with coordinates $(s,v)$, where 
\begin{gather*}
    v=(v_1,\ldots,v_m)\in\CC^k , v_i=(v_{i0},\ldots, v_{in})\in\CC^{n+1},\\ 
     w= (w_1,\ldots,w_m)\in(\CC^*)^k , w_i=(w_{i0},\ldots, w_{in})\in(\CC^*)^{n+1},\\
     s=(w,\lambda)\in  (\CC^*)^k\times \CC^*.
\end{gather*}

Define \[
\begin{gathered} x= (x_0,x_1,\ldots,x_m),\text{ where $x_0=0,\, x_i= 1-w_{i0}$ for $i=1,\ldots,m$,}\\
y= (y_0,\ldots,y_n),\text{ where $y_0=1,\, y_j= 1-\dfrac{x_1}{w_{1j}}$ for $j=1,\ldots,n$,}
\end{gathered}
\]

By replacing   $\log w_{kl}\mapsto 2\pi iv_{kl}$, period coordinates can be transferred to  $ \Lambda $. Thus in analogy to  \cref{eq:periods}  define
\begin{equation}\label{eq:A}
\begin{gathered}
R(s,v):= R(\lambda,x,y),\quad 
F^{alg}(s,v) = F^{alg}(\lambda,x,y),\\
T_R(v):= 2\pi i(R\cdot v_1,\ldots, R\cdot v_m),\quad
A(s,v):= (F^{alg}(s)+T_{R(s)}(v),2\pi i R(s)).
\end{gathered}
\end{equation}

If one plugs in  $2\pi iv_{kl} = \log w_{kl}$, then $T_R(v)= F^{log}$ and $A(s,v)$ coincides with period coordinates. We refer to $A$ as {\em the twisted algebraic period map}. Let \[
\Gamma = \{(s,v)\in\Lambda\,|\, w_{ij} = \exp(v_{ij} )\text{ for all } i,j\} \] be the graph of the exponential map.

When restricted to an analytic irreducible component  $(\St\times \CC^k)\cap \Gamma$
  the twisted algebraic period map $A$ agrees with a branch of period coordinates on $\St$.
  
In this new language, an algebraic subvariety $S\subseteq \St$ is bi-algebraic if and only if there exists an analytic component $S'$ of $(S\times \CC^k) \cap\Gamma$  such that \[
\dim \overline{A(S')}^{Zar} = \dim S,\]
where $\overline{A(S')}^{Zar}$ is the Zariski closure of $A(S')$.

\begin{definition}
A {\em torus coset} $W\subseteq \St$ is an irreducible component of \[
(W'\times \CC^*) \cap \St,
\]
where  $W'\subseteq (\CC^*)^k$ is a translate of a subtorus.
The {\em associated translated rational subspace } $V$ is defined by the property that $\exp(V) = W'.$
If $W'$ is the smallest torus coset containing $\pr_1(S)\subseteq (\CC^*)^k$, then $V$ is the {\em minimal affine space associated } with $S$.
In that case we call any irreducible component of $(W'\times \CC^*) \cap \St$ a minimal torus coset containing $S$.

An irreducible, algebraic variety $S\subseteq \St$ is a {\em residue coset } if\[
S\eqc  W\cap R^{-1}(Z)\] for a torus coset $W$ and a subvariety $Z\subseteq \mathcal{R}$.
Here and in the sequel $\eqc$ means that $S$ is an irreducible component $ W\cap R^{-1}(Z)$. We only use this notation if the space on the left-hand side is irreducible.
\end{definition}

In \Cref{tab:notation} we summarize the notations that will be used frequently in the sequel.

\begin{table}[H]
\caption{Notation for  period maps}
\begin{tabularx}{\textwidth}{@{}XX@{}}
\toprule
  Notation & \\
  $m$ & Number of zeros of $\St$ -1 \\
  $n$ & Number of of poles of $\St$ - 2 \\
  $k=m(n+1)$ & \\
  $\calP=\CC^{m+n+1}$ & Period space \\
  $\calR=\CC^{n+1}$ & Residue space\\
 
  $R: \St\to \calR $ & Residue map\\
   $F^{alg}:\St\to \CC^{m}$ & algebraic part of a relative period\\
  $F^{log}:\widetilde{\calH(\mu)}\to \CC^m$ & logarithmic part of a relative period\\
  $\alpha:\St\to \calP$ & algebraic period map\\
 $\alpha(s) = (F^{alg}(s), \,2\pi iR(s)), s\in \St$ &\\ 
  $\logSt = \{s\in \St\,|\, F^{alg}(s) = 0\}$&\\
   $\Lambda = (\CC^*)^k\times\CC^*\times \CC^k$ & \\
    $\Gamma\subseteq \Lambda$ & Graph of the exponential function \\
   $A:\Lambda\to \calP$ & twisted algebraic period map\\
\bottomrule
\end{tabularx}
\label{tab:notation}
\end{table}

\section{Ax-Schanuel in genus zero} The goal of  this section is to prove Ax-Schanuel, \Cref{thm:ax}.

\begin{definition}
Let $S\subseteq \St$ be an irreducible, algebraic subvariety and $V$ the associated minimal affine space. The {\em fiber rank of $S$} is 
\[
 \fib(S) :=  \max \{ \dim A(\{s\}\times V)\,|\, s\in S \}  =\max \{ \dim T_{R(s)}V\,|\, s\in S \},
\]
where we recall 
\[
\begin{gathered}
T_R(v)= 2\pi i(R\cdot v_1,\ldots, R\cdot v_m),\\
\text{ where } v=(v_1,\ldots, v_m)\in \CC^{k},  v_i\in \CC^{n+1}, i =1,\ldots,m, \, R\in \CC^{n+1}.
\end{gathered}
\]
\end{definition}

\begin{remark}
The fiber rank of $S$ only depends on $R(S)$ and the smallest torus coset $W$ containing $S$. In particular if $S'$ is an irreducible component of \[
R^{-1}(\overline{R(S)}^{Zar}) \cap W\] containing $S$, then  $S'$ is minimal residue coset containing $S$ and furthermore
\[
\dim R(S)=\dim R(S'), \, \fib(S) = \fib (S').
\]
\end{remark}

We now set up the bi-algebraic structure for Ax-Schanuel.
Let
\[
\begin{tikzcd}
\widetilde{S} \ar[r,"\phi"]\ar[d,"\pi",swap]& \calP& & S\times V \ar[d,"\rho"]\\
S & & \widetilde{S} \ar[r,"\pi\times \phi ",swap]& S\times \calP,
\end{tikzcd}
\]
where
\begin{itemize}
\item $V$ is the minimal affine space associated with $S$,
\item $\pi:\widetilde{S}\to S$ is the universal covering,
\item $\phi: \widetilde{S}\to A(S\times V)\subseteq\calP$ is the period map,
\item $\rho:S\times V\to S\times \calP, \,\rho(s,v) = (s, A(s,v))$.
\end{itemize}

Let \[
\Omega_S=\rho(S\times V)\subseteq S\times \calP\]
and $\Sigma_S\subseteq \Omega_S$ be the graph of $\pi\times \phi:\widetilde{S}\to S\times \calP$.
Then 
\[
\dim \Omega_S = \dim S  + \fib(S), \quad \codim_{\Omega_S} \Sigma_S= \fib(S).
\]

\begin{thm}[Ax-Schanuel in genus zero]
\label{thm:ax-residue}
Let $\St$ be a stratum of genus zero differentials and $S\subseteq  \St$ be a residue coset. Furthermore, let $Z\subseteq \Omega_S$ be an algebraic variety and $U$ an analytic irreducible component of $Z\cap \Sigma_S$ such that 
\[
\codim_{\Omega_S} U < \codim_{\Omega_S} Z + \codim_{\Omega_S} \Sigma_S,
\]
then $\pr_1(U)$ is contained in a proper residue coset $S'$, where $\pr_1:\Omega_S\to S$ is the projection.

\end{thm}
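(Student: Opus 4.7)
The plan is to reduce the statement to the Ax-Schanuel in families theorem for the exponential function (\Cref{thm:AxExp}), applied to the restriction $\exp\colon V\to W'$ of the exponential to the affine space associated with $S$. Concretely, I would first define $\tilde Z\subseteq S\times V$ to be the irreducible component of $\rho^{-1}(Z)$ that dominates $Z$; on the open locus where $\fib(s)=\fib(S)$ the map $\rho$ has equidimensional fibers of dimension $\dim V-\fib(S)$, so $\dim\tilde Z=\dim Z+\dim V-\fib(S)$. The universal cover $\widetilde{S}$ embeds into $S\times V$ via analytic continuation of the logarithm, and its image $\widetilde{\Sigma}_S$ is an irreducible analytic component of $(S\times V)\cap\Gamma$ on which $\rho$ restricts to a biholomorphism onto $\Sigma_S$. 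Hence the analytic component $U$ of $Z\cap\Sigma_S$ lifts to an analytic component $\tilde U:=\rho^{-1}(U)\cap\widetilde{\Sigma}_S$ of $\tilde Z\cap\Gamma$ with $\dim\tilde U=\dim U$.

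Next, after translating $W'$ to a subtorus, identify $W'\cong(\CC^*)^{k'}$ and $V\cong\CC^{k'}$ where $k':=\dim V$, and view the data inside the ambient space $\Lambda'':=(\CC^*)^{k'}\times\CC\times\CC^{k'}$, with the middle factor parametrising the $\lambda$-direction of $W$. A direct computation using $\dim\tilde Z=\dim Z+k'-\fib(S)$ and $\codim_{\Omega_S}\Sigma_S=\fib(S)$ shows that the hypothesis of the theorem translates to
\[
\codim_{\Lambda''}\tilde U<\codim_{\Lambda''}\tilde Z+k'=\codim_{\Lambda''}\tilde Z+\codim_{\Lambda''}\Gamma.
\]
Applying \Cref{thm:AxExp} to $\tilde Z\subseteq\Lambda''$ and the component $\tilde U$ of $\tilde Z\cap\Gamma$ produces a proper torus coset $T'\subsetneq W'$ with $\pr_1(\tilde U)\subseteq T'$. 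Since $\rho$ preserves the $S$-coordinate, $p_1(\pr_1(U))=\pr_1(\tilde U)\subseteq T'$, so if $W''$ denotes the irreducible component of $(T'\times\CC^*)\cap W$ containing $\pr_1(U)$, then $W''$ is a torus coset in $\St$ strictly contained in $W$---otherwise $W\subseteq T'\times\CC^*$, and since $p_1(S)$ is Zariski-dense in $W'$ by minimality of $W$, one would get $W'\subseteq T'$, contradicting properness. Writing $S\eqc W\cap R^{-1}(Z_R)$, the irreducible component $S'$ of $W''\cap R^{-1}(Z_R)$ containing $\pr_1(U)$ is then a residue coset, and the minimality of $W$ as the smallest torus coset containing $S$ forces $S'\subsetneq S$.

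The principal obstacle is ensuring that the torus coset produced by Ax-Schanuel is properly contained in $W'$ rather than in the ambient $(\CC^*)^k$: a proper subcoset of $(\CC^*)^k$ could well contain $W'$ entirely and impose no constraint on $\pr_1(U)$. This is the reason for relativising the exponential to $\exp\colon V\to W'$ before invoking \Cref{thm:AxExp} and adjusting the codimension count accordingly. A secondary subtlety is that $\rho^{-1}(Z)$ may have components of excess dimension over the locus where $\fib(s)<\fib(S)$, which is why one extracts the unique dominating component; the inclusion $\tilde U\subseteq\widetilde{\Sigma}_S$, where $\rho$ is a local isomorphism, guarantees that $\tilde U$ lies in this component.
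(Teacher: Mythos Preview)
Your architecture matches the paper's: lift everything through $\rho\colon S\times V\to\Omega_S$, translate the codimension inequality into one for $\tilde Z\cap\Gamma$ inside $W\times V$, and invoke Ax--Schanuel for the exponential on $\exp\colon V\to W'$. The codimension bookkeeping and the final step (extracting a proper residue coset from a proper sub-torus-coset of $W'$) are correct.

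The gap is in what you flag as the ``secondary subtlety''. Your justification that $\tilde U$ lies in the expected-dimension component of $\rho^{-1}(Z)$---namely, that $\rho$ is a local isomorphism on $\widetilde\Sigma_S$---does not work. That restriction only says the section $\widetilde\Sigma_S\to\Sigma_S$ is a biholomorphism, hence $\dim\tilde U=\dim U$; it says nothing about which irreducible component of the \emph{algebraic} set $\rho^{-1}(Z)$ the analytic set $\tilde U$ sits in. Concretely, if $\pr_1(U)$ lies entirely in the closed locus $\{s\in S:\dim\ker T_{R(s)}|_{V}>\dim V-\fib(S)\}$, then every component of $\rho^{-1}(Z)$ through $\tilde U$ has dimension strictly larger than $\dim Z+\dim V-\fib(S)$, your formula for $\dim\tilde Z$ is wrong on the relevant component, and the inequality you need for \Cref{thm:AxExp} may fail.

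This case is precisely why the conclusion of the theorem is a proper \emph{residue} coset rather than merely a proper \emph{torus} coset, and the paper treats it explicitly: the jump locus for the fibre dimension of $\rho$ is $R^{-1}(R')$ for a proper subvariety $R'\subsetneq\overline{R(S)}^{Zar}$, so if $\pr_1(U)$ is contained there you already have the desired proper residue coset without ever invoking \Cref{thm:AxExp}. Your argument should therefore branch: either $\pr_1(U)$ meets the open locus where $\fib(s)=\fib(S)$, in which case the component of $\rho^{-1}(Z)$ containing $\tilde U$ has the expected dimension and your reduction goes through to produce a proper torus coset; or $\pr_1(U)$ lies in the jump locus, and the residue constraint finishes the proof directly.
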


\begin{proof}
Suppose $\pr_1(U)$ is not contained in a proper residue coset.
We will compare the bi-algebraic structure of $S$ to the bi-algebraic structure of $V$ coming from the exponential map and then apply Ax-Schanuel for $\exp$.

The following diagram commutes, where the horizontal arrows are inclusions.
\[
\begin{tikzcd}
 \rho^{-1}(Z)\cap  \Gamma \ar[r] \ar[d,"\rho"]& \rho^{-1}(Z) \ar[r]\ar[d,"\rho"]&  S\times V\ar[d,"\rho"]\\
 U \ar[r] &  Z\ar[r] & 
\Omega_S
\end{tikzcd}
\]
Since the period map is a local biholomorphism, one has
\[
\dim U = \dim \rho^{-1}(Z)\cap \Gamma,
\]
from which it follows
\[
\begin{split}
\codim_{\Omega_S} U &=  \dim {\Omega_S}- \dim U \\
&= \dim {\Omega_S}- \dim \rho^{-1}(Z)\cap  \Gamma\\
&= \dim {\Omega_S}-\dim \rho^{-1}(Z) + \codim_{W\times V} \Gamma\\
&=\dim {\Omega_S}- \dim Z - \dim F  + \dim V \\
& = \codim_{\Omega_S} Z + \codim_{\Omega_S} \Sigma_S  -\fib(S) - \dim F +\dim V  ,
\end{split}
\]
where $F$ is the  general fiber of  $\rho_{|\rho^{-1}(Z)}$. Note that for the fourth equality we used that $\rho^{-1}(Z)$ and $\Gamma$ intersect generically transversely, since otherwise \[
\pr(\rho^{-1}(Z)\cap \Gamma) = \pr_1(U)
\] would be contained in a proper torus coset by Ax-Schanuel for the exponential map, where  $\pr:  S\times V\to S$ is the projection onto the first factor. This would contradict the minimality of $V$.
The quantity $\dim V - \fib(S)$ is the dimension of the general fiber $F_S$ of $\rho$. Thus, it suffices to show that the general fiber dimension of $\rho$ and of $\rho_{|\rho^{-1}(Z)}$ agree.
For any $(s,x)\in \Omega_S$ the fiber of $\rho$ over $(s,x)$ has dimension
\[
\dim  \ker T_{R(s)}(V).
\]
Thus,  $\dim F$ only depends on the image $R(\pr_1(Z))$.
If ~$\dim F>\dim F_S$, then $R(\pr_1(Z))$ would be contained in the proper subvariety 
\[
R' := \{ r \in \overline{R(S)}^{Zar}\,|\, \dim \ker T_{r}(V) > \dim F_S.\}
\]
Hence, $\pr_1(U)$ is contained in $R^{-1}(R')$ and thus would be contained in a proper residue coset.

 \end{proof}

\begin{remark}
It follows from the proof that, if $S$ is a minimal residue coset containing $\pr_1(U)$, then 
\[
\codim_{\Omega_S} U = \codim_{\Omega_S} Z + \codim_{\Omega_S} \Sigma_S.
\]
Since $\Omega_S$ might not be smooth in general, this does not seem to follow directly from general principles.
\end{remark}

The following corollary establishes the implications $(1)\Leftrightarrow (2)$ of \Cref{thm-characterization}.
\begin{cor}\label{cor:bi}
Let $S\subseteq \St$ be an irreducible, algebraic variety.
Then $S$ is bi-algebraic if and only if \[
\dim S = \dim A(S\times V),
\]
where $V$ is the minimal affine space associated with $S$. In that case, for any irreducible analytic component $S'$ of $\pi^{-1}(S)$ one has \[
\overline{\phi(S')}^{Zar} = \overline{A(S\times V})^{Zar}.
\]
where we recall that \[
\pi:\widetilde{\St}\to \St
\]
is the universal covering and \[
\phi:\widetilde{\St}\to \calP\]
the period map.
\end{cor}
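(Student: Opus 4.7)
The plan is to run the ``Claim'' from the proof of \Cref{cor:low} in the general setting, invoking \Cref{thm:AxExp} (Ax-Schanuel for the exponential function) as the main external input. (Applying \Cref{thm:ax-residue} directly seems inadequate here, since the codimension inequality one obtains in that setting is non-strict.)

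For the forward implication $(1)\Rightarrow(2)$, I would fix an analytic irreducible component $S'$ of $\pi^{-1}(S)$ witnessing bi-algebraicity and set $Z:=\overline{\phi(S')}^{Zar}$, so that $\dim Z=\dim S$. Using the identification of $\widetilde{\St}$ with a component of $(\St\times\CC^k)\cap\Gamma$, after translating by a suitable element of $\ZZ^k$, $S'$ can be identified with an analytic irreducible component of $(S\times V)\cap\Gamma$ of dimension $\dim S$ on which $A$ restricts to a branch of the period map. The key step is to show $A(S\times V)\subseteq Z$. Arguing by contradiction, if the inclusion failed, then $A^{-1}(Z)\cap(S\times V)$ would be a proper algebraic subvariety of $S\times V$; a codimension count inside the ambient $W\times V$, exactly as in the proof of \Cref{cor:low}, would then show that $S'$ violates the Ax-Schanuel codimension inequality. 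By \Cref{thm:AxExp}, this would force $\pr_1(S)$ to lie in a proper subtorus coset of $W'=\exp(V)$, contradicting the minimality of $V$.

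Once $A(S\times V)\subseteq Z$ is in hand, I would conclude $\dim A(S\times V)\leq\dim Z=\dim S$, while the reverse inequality follows from $A(S')=\phi(S')\subseteq A(S\times V)$ together with the fact that $A$ restricted to any component of $(\St\times\CC^k)\cap\Gamma$ coincides with period coordinates and is therefore a local biholomorphism, so $\dim A(S')=\dim S'=\dim S$. The same chain of inclusions $\phi(S')=A(S')\subseteq A(S\times V)\subseteq Z=\overline{\phi(S')}^{Zar}$ yields the equality of Zariski closures stated at the end. The converse $(2)\Rightarrow(1)$ is then immediate from $\dim S=\dim A(S')\leq\dim\overline{\phi(S')}^{Zar}\leq\dim\overline{A(S\times V)}^{Zar}=\dim S$, which forces equality throughout.

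The main obstacle I anticipate is the bookkeeping around the translation of $V$ by integer vectors and the precise meaning of ``proper torus coset'' in the conclusion of \Cref{thm:AxExp}: one needs to apply Ax-Schanuel effectively inside the subtorus $W'\cong(\CC^*)^{\dim W'}$ rather than in the full ambient $(\CC^*)^k$, so that ``proper torus coset'' in the conclusion really does contradict the minimality of $W'$ and not merely some larger coset. Once this technical setup is in place, the codimension computation mirrors the one already carried out in the proof of \Cref{cor:low}, and no new ideas are required.
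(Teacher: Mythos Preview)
Your approach is correct and is genuinely different from the paper's. You bypass \Cref{thm:ax-residue} entirely and run the contradiction argument from the proof of \Cref{cor:low} directly against \Cref{thm:AxExp}, working inside the subtorus $W'\cong(\CC^*)^{\dim V}$ so that the ``proper torus coset'' conclusion really does contradict the minimality of $V$. The codimension bookkeeping you outline goes through verbatim once that identification is made, and the chain $\phi(S')=A(S')\subseteq A(S\times V)\subseteq Z$ gives both the dimension equality and the equality of Zariski closures.

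The paper, by contrast, \emph{does} use \Cref{thm:ax-residue}: it applies the theorem (in fact the equality case recorded in the Remark immediately following it) to the minimal residue coset $Y\supseteq S$ and the algebraic set $(S\times Z)\cap\Omega_Y$, obtaining $\dim\bigl((S\times Z)\cap\Omega_Y\bigr)=\dim\Omega_S$, from which $A(S\times V)\subseteq Z$ follows. Your parenthetical remark is half right: the strict-inequality hypothesis of \Cref{thm:ax-residue} alone is indeed insufficient here (one only gets $\dim Z'\leq\dim S+\fib(Y)$, which is non-strict), but the paper circumvents this precisely via the equality statement in the Remark. So both routes reach the same inclusion $A(S\times V)\subseteq Z$; yours is more self-contained and avoids the intermediate machinery, while the paper's route illustrates \Cref{thm:ax-residue} in action and keeps the argument at the level of the bi-algebraic structure on $\Omega_Y$ rather than descending back to the exponential map.
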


\begin{proof}
First, suppose \[
\dim S = \dim A(S
\times V).
\]
Let $S'$ be a component of $\pi^{-1}(S)$.
Then 
\[
\dim S = \dim \phi(S')\leq \dim \overline{\phi(S')}^{Zar} \leq \dim A(S\times V) = \dim S.
\]
Thus, $S$ is bi-algebraic.

Now assume that $S$ is bi-algebraic. Let $S'$ be a component of $\pi^{-1}(S)$ and $Z=\overline{\phi(S')}^{Zar}$.
 Let $Y$ be a minimal residue coset containing $S$. In particular $\fib(Y) = \fib(S)$.
Apply Ax-Schanuel to $(S\times Z) \cap \Omega_Y$.
Then 
\[
\codim_{\Omega_Y} \phi(S') = \codim_{\Omega_Y} (S\times Z) \cap \Omega_Y + \codim_{\Omega_Y} \Sigma_Y
\]
and hence 
\[
\dim (S\times Z) \cap \Omega_Y = \dim S + \fib(Y) = \dim \Omega_S.
\]
In particular $S\times Z$ contains $\Omega_S$ and thus
$\{s\}\times Z \supseteq \{s\} \times A(\{s\}\times V)$ for any $s$. It follows that $Z$ contains $A(S\times V)$ and 
\[
\dim A(S\times V) \geq \dim S = \dim Z \geq \dim A(S\times V),
\]
hence $\dim S = \dim A(S\times V)$.

\end{proof}

\begin{remark}
Note that in general, if $S\subseteq \St$ is bi-algebraic, then for every component $W$ of $\pi^{-1}(S)$  the Zariski closure of the image $\phi(W)$ is an algebraic variety $Z_W$ of dimension $\dim S$ and the monodromy permutes the different varieties $Z_W$.
The above theorem says that in genus zero, $Z_W$  is already monodromy-invariant for any $W$, i.e., all components are mapped to the same image in period coordinates. In particular, every bi-algebraic variety has a unique algebraic model $\overline{A(S\times V)}^{Zar}$.
\end{remark}

We collect some useful remarks about the fiber rank and bi-algebraic varieties.
\begin{lemma}\label{lemma:fib}
Let $S\subseteq \St$ be an irreducible algebraic subvariety in a stratum of genus zero with minimal affine space $V$.
\begin{enumerate}
\item The inequalities  \[\fib (S)\leq \dim A(S\times V),\quad \dim S \leq \dim A(S\times V)\] hold.
\item If $S$ is bi-algebraic, then $\fib(S)\leq \dim (S)$. In case of equality, $S$ is affine-linear with algebraic model $A(\{s\}\times V)$ for generic $s\in S$.
\item If $\fib(S)=0,$ then $S$ is bi-algebraic.
\end{enumerate}

\end{lemma}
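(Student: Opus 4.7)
The plan is to reduce all three statements to \Cref{cor:bi} by exploiting two structural features of the twisted algebraic period map. First, for each fixed $s\in S$, the restriction $v\mapsto A(s,v)=(F^{alg}(s)+T_{R(s)}(v),\,2\pi iR(s))$ is affine-linear with linear part $T_{R(s)}\colon V\to \calP$. Second, the minimality of $V$ ensures that $\exp\colon V\to W'$ is surjective onto the minimal torus coset containing $\pr_1(S)$, so some analytic irreducible component $S'$ of $(S\times\CC^k)\cap\Gamma$ lies in $S\times V$ and projects onto $S$; restricted to $S'$ the map $A$ agrees with a branch of period coordinates on $S$, hence is a local biholomorphism onto its image.

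Part (1) then follows immediately: from the second feature, $\dim A(S\times V)\geq\dim A(S')=\dim S$, and from $A(\{s\}\times V)\subseteq A(S\times V)$ one reads off $\fib(S)=\max_s\dim A(\{s\}\times V)\leq\dim A(S\times V)$. For part (2), bi-algebraicity combined with \Cref{cor:bi} gives $\dim A(S\times V)=\dim S$, so (1) yields $\fib(S)\leq\dim S$. In the equality case $\fib(S)=\dim S$, I would pick $s\in S$ achieving $\dim A(\{s\}\times V)=\fib(S)=\dim S$. Since $A(s,\cdot)$ is affine-linear, $A(\{s\}\times V)$ is a Zariski closed affine subspace of $\calP$ of dimension $\dim S$, sitting inside the irreducible variety $\overline{A(S\times V)}^{Zar}$ of the same dimension; hence they must coincide. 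By \Cref{cor:bi} this common variety is the algebraic model of $S$, so the model is an affine subspace and $S$ is affine-linear.

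For part (3), if $\fib(S)=0$ then $\dim T_{R(s)}(V)=0$ for every $s\in S$, so $T_{R(s)}$ is constant on $V$; fixing any $v_0\in V$, it follows that $A(s,v)=A(s,v_0)$ for all $(s,v)\in S\times V$. Hence $A(S\times V)$ equals the image of $S$ under the algebraic morphism $s\mapsto A(s,v_0)$, giving $\dim A(S\times V)\leq\dim S$. Combined with (1) this forces equality, and \Cref{cor:bi} concludes that $S$ is bi-algebraic. I do not anticipate a serious obstacle in this lemma; the only delicate points are selecting an analytic branch $S'\subseteq S\times V$ in part (1) and recognizing in part (2) that a Zariski closed affine subspace of dimension $d$ inside an irreducible variety of the same dimension $d$ must be the whole variety, both of which are handled by the minimality of $V$ together with elementary dimension theory.
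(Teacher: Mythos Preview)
Your proposal is correct and follows essentially the same approach as the paper. The only cosmetic difference is in part~(3): the paper phrases the bound as $\dim A(S\times V)=\dim\alpha(S)\leq\dim S$ via the algebraic period map $\alpha$, whereas you obtain $\dim A(S\times V)\leq\dim S$ via the algebraic map $s\mapsto A(s,v_0)$; these are equivalent since the two maps differ by an algebraic automorphism of $\calP$.
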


\begin{proof}
The inequality $\fib(S)\leq \dim A(S\times V)$ follows from 
\[
\dim A(\{s\}\times V)\leq \dim A(S\times V)
\]
for all $s\in S$ and the inequality $\dim S\leq \dim A(S\times V)$ follows since period coordinates are local coordinates. This shows (1).
If $S$ is bi-algebraic, then $\dim S =\dim A(S\times V)$, hence the inequality $\fib (S)\leq \dim S$ follows from $(1)$.
In case of equality $\fib(S) = \dim S$, it follows that \[
A(\{s\}\times V)= A(S\times V)\]
for generic $s$ and $A(S\times V)$ is the algebraic model in period coordinates. Furthermore, $A(\{s\}\times V)$ is affine-linear by the definition of the twisted algebraic period map $A$.

Finally, if $\fib(S) =0$, then
\[
\dim \alpha(S) \leq \dim S \leq \dim A(S\times V) = \dim \alpha(S)
\]
and $S$ is bi-algebraic by \Cref{cor:bi}.

\end{proof}

The following result shows that a bi-algebraic variety $S$ can be recovered from a minimal torus coset containing $S$ as well as the image $\alpha(S)$ under the algebraic period map.

\begin{thm}[Structure theorem for bi-algebraic varieties]\label{thm:bialg_structure}
Let $S\subseteq \St$ be an irreducible algebraic variety in a stratum in genus zero.
If $S$ is bi-algebraic,
 then \[
S \eqc W \cap \alpha^{-1}(Z),
\]
where 
\begin{itemize}
    \item  $W$ is a minimal torus coset containing $S$,
    \item  $Z=\overline{\alpha(S)}^{Zar}.$
\end{itemize}
\end{thm}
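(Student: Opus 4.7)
The plan is to fix a minimal torus coset $W$ containing $S$ with associated affine space $V$, set $\hat Z := \overline{A(S\times V)}^{Zar}$ for the algebraic model of $S$ furnished by Corollary \ref{cor:bi}, and show that any irreducible component $T$ of $W \cap \alpha^{-1}(Z)$ containing $S$ must coincide with $S$. Since $S$ itself is irreducible and visibly lies in $W\cap \alpha^{-1}(Z)$, this is exactly the assertion $S \eqc W \cap \alpha^{-1}(Z)$.

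First I would perform a minimality transfer: because $T\supseteq S$, any torus coset containing $T$ also contains $S$, so the minimality of $W$ for $S$ forces $W$ to be minimal for $T$ as well, with $V$ still the associated affine space. Next I would establish the key inclusion $A(T\times V) \subseteq \hat Z$. For this I would exploit that in genus zero the twisted algebraic period map factors through $\alpha$: the identity
\[
A(s,v) = \alpha(s) + (T_{R(s)}(v),0),
\]
together with the fact that the residue vector $R$ appears as part of the coordinates of $\alpha$, produces a regular map $\tilde A : \calP \times \CC^k \to \calP$ with $A = \tilde A \circ (\alpha \times \id)$. Zariski continuity of $\tilde A$ then gives
\[
A(T\times V) = \tilde A(\alpha(T)\times V) \subseteq \tilde A(Z \times V) \subseteq \overline{\tilde A(\alpha(S)\times V)}^{Zar} = \hat Z.
\]
With this inclusion in hand, Lemma \ref{lemma:fib}(1) and bi-algebraicity of $S$ give
\[
\dim T \leq \dim A(T\times V) \leq \dim \hat Z = \dim S \leq \dim T,
\]
so all inequalities are equalities; in particular $\dim T = \dim S$, and since both are irreducible, $T = S$.

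The main obstacle is setting up the factorization $A = \tilde A \circ (\alpha \times \id)$ cleanly, which is really the reason this clean structural statement is possible in genus zero: the logarithmic part of periods enters $A$ only through the bilinear combination $T_{R(s)}(v)$, and because residues are algebraic functions on $\St$ realized as coordinates of $\alpha$, the twisted period map genuinely descends to an algebraic map out of $\alpha \times \id$. Once $\tilde A$ is in place, the rest is a dimension comparison drawing only on tools already developed in the paper.
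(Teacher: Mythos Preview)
Your proof is correct and follows essentially the same approach as the paper: both arguments hinge on the observation that $A(s,v)$ depends on $s$ only through $\alpha(s)=(F^{alg}(s),2\pi i R(s))$, so that any $y$ in the component $T$ with $\alpha(y)\in Z=\overline{\alpha(S)}^{Zar}$ has $A(\{y\}\times V)\subseteq \overline{A(S\times V)}^{Zar}$, forcing $\dim T\le\dim S$. The paper verifies this pointwise for a generic $y\in T$ (using that $\alpha(y)\in\alpha(S)$ for such $y$), while you package the same observation as the factorization $A=\tilde A\circ(\alpha\times\id)$ and invoke Zariski continuity---a slightly cleaner bookkeeping of the identical idea.
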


\begin{proof}
First, assume $S$ is bi-algebraic.
Let $Y$ be an irreducible component of
\[W\cap \alpha^{-1}(\overline{\alpha(S)}^{Zar}).
\]
containing $S$. By definition, $S\subseteq Y$ and the goal is to show $Y\subseteq S$.

We claim that
\[
 A(Y\times V )=A(S\times V)
\]
from which it follows that $\dim Y\leq \dim A(Y\times V ) = \dim A(S\times V)=\dim S.$
Hence $Y$ and $S$ agree.
The see the claim let $y\in Y$ be a generic point.
There exist $s\in S$ such that
\[
R(y)=R(s),\, F^{alg}(y) = F^{alg}(s)
\]
and thus
\[
A(y,v) = (F^{alg}(s)+ T_{R(s)}(v),2\pi i R(s))\in A(S\times V).
\]

\end{proof}

\subsection{Bi-algebraic geometry of $\logSt$}
We now focus on bi-algebraic varieties contained in the {\em log stratum} \[
    \logSt: = \{s\in\St\,|\, F^{alg}(s) =0\}.
    \]

The main examples of bi-algebraic varieties in log strata are bi-algebraic varieties in strata with simple poles and, as we will see in \Cref{sec:Qbar}, bi-algebraic varieties containing a Zariski dense of arithmetic points.

For any variety $S\subseteq \logSt$ one has 
\[
\alpha(s) = (0,2\pi i R(s)) \text{ for } s\in S.
\]
and
\begin{equation}\label{eq:fibdim}
\dim A(S\times V) = \dim R(S) + \fib(S).
\end{equation}

By specializing  \Cref{thm:bialg_structure} to bi-algebraic varieties in $\logSt$, one obtains the following.

\begin{thm}[Structure theorem for bi-algebraic varieties in $\logSt$]\label{prop:simple}
Let $\St$ be a stratum in genus zero and $S\subseteq\logSt\subseteq\St$ an irreducible, algebraic subvariety.
If $S$ is bi-algebraic, then $S$ is a component of 
\[W\cap R^{-1}(Z) \cap \logSt\] and \[
\dim S = \dim R(S) + \fib(S),
\]
where \begin{itemize}
    \item  $W\subseteq \St$ is a minimal torus coset containing $S$,
    \item $Z= \overline{R(S)}^{Zar}\subseteq \calR$.
    \end{itemize}
Furthermore, the fibers of the  residue map \[R_{|S}:S\to\calR\] are affine-linear bi-algebraic varieties. Thus, if the residues are constant on $S$, then $S$ is affine-linear.
Conversely, if $S$ is a component of  \[
W\cap R^{-1}(Z)\cap\logSt,
\]
where $W\subseteq\St$ is a torus coset and $Z\subseteq \calR$ an algebraic variety and furthermore satisfies \[
\dim S = \dim Z + \fib(S),
\] then $S$ is bi-algebraic.

\end{thm}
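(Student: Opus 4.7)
The strategy is to specialize the general structure theorem \Cref{thm:bialg_structure} to the situation $S\subseteq\logSt$ and then to extract the remaining claims from \Cref{cor:bi} and \Cref{lemma:fib}. The forward direction of the first assertion follows immediately: since $F^{alg}\equiv 0$ on $\logSt$, the algebraic period map collapses to $\alpha(s)=(0,2\pi i R(s))$, so setting $Z=\overline{R(S)}^{Zar}$ gives $\alpha^{-1}(\overline{\alpha(S)}^{Zar})=R^{-1}(Z)\cap\logSt$, and the conclusion $S\eqc W\cap R^{-1}(Z)\cap\logSt$ is then the content of \Cref{thm:bialg_structure}. The dimension formula $\dim S=\dim R(S)+\fib(S)$ drops out by combining \Cref{cor:bi} with the identity \eqref{eq:fibdim}.

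For the fiber claim, fix $r\in R(S)$ and let $F$ be an irreducible component of $R_{|S}^{-1}(r)$, with associated minimal affine space $V_F$. The key observation is that, since $R\equiv r$ on $F$, the twisted algebraic period map restricts to $A(s,v)=(T_r(v),2\pi ir)$, which is independent of $s$. Hence $A(F\times V_F)=A(\{s\}\times V_F)$ is an affine-linear variety of dimension $\fib(F)=\dim T_r(V_F)$, and \Cref{lemma:fib}(1) forces $\dim F\leq \fib(F)$. In the opposite direction, upper semicontinuity of fibers of the algebraic map $R_{|S}:S\to\calR$, combined with the dimension formula just established, yields $\dim F\geq \fib(S)$, while the containment of underlying linear parts $V_F\subseteq V$ gives $\fib(F)=\dim T_r(V_F)\leq \dim T_r(V)\leq \fib(S)$. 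The chain of inequalities collapses to $\dim F=\fib(F)=\fib(S)$, so $F$ is bi-algebraic by \Cref{cor:bi} and affine-linear by \Cref{lemma:fib}(2). The statement for $S$ with constant residues is then the special case $F=S$.

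For the converse, suppose $S$ is a component of $W\cap R^{-1}(Z)\cap\logSt$ with $\dim S=\dim Z+\fib(S)$. The inclusion $R(S)\subseteq Z$ yields $\dim R(S)\leq \dim Z$, so \eqref{eq:fibdim} gives
\[
\dim A(S\times V)=\dim R(S)+\fib(S)\leq \dim Z+\fib(S)=\dim S.
\]
Combined with the opposite inequality from \Cref{lemma:fib}(1), this forces $\dim S=\dim A(S\times V)$, and \Cref{cor:bi} delivers the bi-algebraicity of $S$.

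I expect the main technical hurdle to be the fiber statement, specifically propagating the equality $\dim F=\fib(F)$ from generic fibers to \emph{all} fibers of $R_{|S}$. This is exactly where upper semicontinuity of fiber dimension of the algebraic morphism $R_{|S}$ is crucial, combined with the fiber-rank monotonicity $\fib(F)\leq \fib(S)$ inherited from $V_F\subseteq V$; the remaining pieces are essentially formal consequences of the machinery already developed in the preceding sections.
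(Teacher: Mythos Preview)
Your argument is correct. For the forward structural claim, the dimension formula, and the converse, your approach coincides with the paper's: both invoke \Cref{thm:bialg_structure} together with \eqref{eq:fibdim} (you make the reduction $\alpha^{-1}(\overline{\alpha(S)}^{Zar})=R^{-1}(Z)\cap\logSt$ explicit, which the paper leaves implicit).

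The fiber claim is where the two arguments diverge. The paper proceeds directly: since $S$ is bi-algebraic with model $A(S\times V)$ (by \Cref{cor:bi}), intersecting this model with the residue constraint $\{R=R(s)\}$ immediately identifies the model of each fiber component as $A(\{s\}\times V)=(T_{R(s)}V,\,2\pi iR(s))$, which is visibly affine-linear. You instead work intrinsically with the minimal affine space $V_F$ of a fiber component $F$ and close a chain of inequalities $\fib(S)\leq\dim F\leq\fib(F)\leq\fib(S)$ via upper semicontinuity of fiber dimension and the monotonicity $T_r(V_F)\subseteq T_r(V)$. Your route is longer but more self-contained: it re-establishes bi-algebraicity of $F$ from scratch rather than reading it off from the model of $S$, and it has the side benefit of confirming $T_r(V_F)=T_r(V)$ and $\dim F=\fib(S)$ for \emph{every} fiber, not just generic ones. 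The paper's argument is quicker but relies on the reader unpacking how the local period-coordinate description of $S$ interacts with the residue slicing.
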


\begin{proof}
This now follows from \Cref{thm:bialg_structure} combined with \Cref{eq:fibdim}.
To see that the fibers of the residue map are affine-linear, note that any irreducible component of $S \cap R^{-1}(\{R(s)\})$ has algebraic model
\[
 A(\{s\}\times V) = (T_{R(s)}V, 2\pi iR(s)).
\]
\end{proof}

By further specializing to strata with only simple poles, we obtain  \Cref{cor:rescoset}.

\begin{proof}[Proof of \Cref{cor:rescoset}]
    For strata with simple poles, one has $\logSt=\St$. If, additionally, the residues are constant, then $S$ is a fiber of the residue map $R_{|S}:S\to \calR$. The first part thus follows from \Cref{prop:simple}.
    For the second case, write $S= \CC^* S_1$, where $S_1$ has constant residues, and apply the first part.
\end{proof}

\section{Foliations with affine-linear leaves}
In this section, we finish the proof of \Cref{thm-characterization}.
Thus far we have established $(1)\Leftrightarrow (2)$. The proof of the remaining implications is split into two parts. The following lemma shows the implication $(3)\Rightarrow (2)$.
\begin{lemma}\label{lemma:swept} Let $S\subseteq \St$ is an algebraic variety and $V$ the minimal affine space associated with $S$.
Suppose for any $s\in S$ there exists an affine-linear bi-algebraic subvariety $X_s\subseteq S$ containing $s$ with algebraic model $A(\{s\}\times V)$, then $S$ is bi-algebraic.
\end{lemma}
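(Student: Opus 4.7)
My plan is to verify the criterion in \Cref{cor:bi}, namely $\dim A(S\times V) = \dim S$. One direction, $\dim S \leq \dim A(S\times V)$, is \Cref{lemma:fib}(1), so the real content is the reverse inequality $\dim A(S\times V) \leq \dim S$.

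The key observation is that the affine subspace $A(\{s\}\times V) \subseteq \calP$ is constant as $s$ varies along an affine-linear leaf $X_s$. Precisely, I claim $A(\{s'\}\times V) = A(\{s\}\times V)$ for every $s' \in X_s$. To see this, note that $A(\{s\}\times V) = \alpha(s) + T_{R(s)}(V)\times\{0\}$ is an affine subspace with direction space $T_{R(s)}(V_0)\times\{0\}$, where $V_0$ denotes the linear direction of the affine space $V$. Two such affine subspaces coincide iff $R(s) = R(s')$ and $F^{alg}(s') - F^{alg}(s) \in T_{R(s)}(V_0)$. Since $X_s$ is affine-linear and bi-algebraic with algebraic model $A(\{s\}\times V)$, one has $\phi(\widetilde{s'}) \in A(\{s\}\times V)$ for a lift $\widetilde{s'}$ in a chosen component $\widetilde{X_s}$ of $\pi^{-1}(X_s)$; equivalently, $\phi(\widetilde{s'}) = A(s, v)$ for some $v \in V$. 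Comparing the last coordinate yields $R(s) = R(s')$, and comparing the first coordinate, together with the identity $F^{log}(\widetilde{s'}) = T_{R(s)}(\widetilde v)$ where $\widetilde v$ denotes the logarithmic coordinates at $\widetilde{s'}$, gives $F^{alg}(s') - F^{alg}(s) = T_{R(s)}(v - \widetilde v)$. Choosing the component $\widetilde{X_s}$ so that the logarithmic coordinates track a continuous branch inside $V$ (possible because $X_s$ is connected, lies in the torus coset $W$ with $\exp(V) = W'$, and a base-point lift can be selected with log coordinates in $V$) ensures $\widetilde v \in V$, hence $v - \widetilde v \in V_0$, proving the claim.

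Given the claim, I introduce the algebraic morphism $\psi \colon S \to \calM$, where $\calM$ is the parameter variety of affine subspaces of $\calP$ of dimension at most $f := \fib(S)$, defined by $\psi(s) := A(\{s\}\times V)$. The claim shows that each fiber $\psi^{-1}(\psi(s))$ contains $X_s$, and $\dim X_s = f$ for $s$ in a dense open subset of $S$; consequently the generic fiber of $\psi$ has dimension at least $f$, so $\dim \psi(S) \leq \dim S - f$. Forming the incidence variety
\[
I = \{(\Lambda, x) \in \overline{\psi(S)}^{Zar} \times \calP : x \in \Lambda\},
\]
one has $\dim I \leq \dim \psi(S) + f \leq \dim S$. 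Since $A(S\times V) = \bigcup_{s\in S} \psi(s)$ is the image of the projection $I \to \calP$, this gives $\dim A(S\times V) \leq \dim S$, completing the proof.

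The main obstacle will be the careful selection of a lift so that the logarithmic coordinates lie in $V$ rather than in an arbitrary element of the coset $V + \ZZ^k$; once the base-point lift is fixed, continuity along the connected variety $X_s$ keeps $\widetilde v$ inside $V$, and the remainder is a direct computation from the formula defining $A$.
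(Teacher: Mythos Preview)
Your proof is correct and takes a genuinely different route from the paper's. The paper works directly in a single period chart $U$ around a generic point and computes the chain
\[
\phi(S\cap U)=\bigcup_{s}\phi(X_s\cap U)=\bigcup_{s} A(\{s\}\times V)\cap\phi(U)=A(S\times V)\cap\phi(U),
\]
reading off $\dim S=\dim A(S\times V)$ immediately. You instead extract and prove the intermediate \emph{constancy} statement $A(\{s'\}\times V)=A(\{s\}\times V)$ for $s'\in X_s$, and then run a parameter-space/incidence-variety count via $\psi\colon s\mapsto A(\{s\}\times V)$. Your constancy lemma is precisely what justifies the middle equality in the paper's chain (why one may replace the union over $s\in S\cap U$ by the union over all $s\in S$ without changing the local picture), so in effect you have unpacked the mechanism the paper leaves implicit. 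The trade-offs: the paper's argument is a two-line local computation but relies on the somewhat delicate identification $\phi(X_s\cap U)=A(\{s\}\times V)\cap\phi(U)$; your argument is longer but entirely algebraic, avoids any chart-by-chart analysis, and produces as a by-product the fiber bound $\dim\psi(S)\le\dim S-\fib(S)$, which is morally the content of \Cref{prop:foliation}. Your handling of the lift (choosing a base-point with logarithmic coordinates in $V$ and using that $V$ is a connected component of $\exp^{-1}(W')$) is the right way to make the delicate point precise.
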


\begin{proof}
Our goal is  to show 
\[
\dim A(S\times V) = \dim S.
\]

For each $s\in S$ let $X_s\subseteq S$ be a bi-algebraic subvariety with minimal affine space $V_s$ such that 
\[
A(X_s\times V_s) = A(\{s\}\times V).
\]

Let $U\subseteq \St$ be a small period chart containing a generic point $s\in S$ and recall that $\phi:U\to \calP$ is the period map.

Then
\[
\begin{gathered}
\phi(S\cap U) = \cup_{s\in U} \phi((X_s\cap U) =  \cup_{s\in S} A(X_s \times V_s) \cap \phi(U)=  \\
\cup_{s\in S} A(\{s\} \times V) \cap \phi(U)= A(S\times V) \cap \phi(U).
\end{gathered}
\]
Hence $\dim S = \dim A(S\times V)$ and thus $S$ is bi-algebraic.
\end{proof}

 The next proposition proves  the implication $(3)\Rightarrow (1)$ in \Cref{thm-characterization}.

\begin{prop}\label{prop:foliation}
Let $S\subseteq \St$ be bi-algebraic with minimal affine space $V$.
Then $S$ is swept out by affine-linear bi-algebraic varieties. More precisely, over a dense open set, there exists a foliation with leaves 
\[
 X_s \eqc \overline{\pr_1(A_{|S\times V}^{-1}(A(\{s\}\times V))}^{Zar}
\]

and $X_s\subseteq S$ is bi-algebraic with algebraic model $A(\{s\}\times V)$. In particular   \[
\dim X_s = \fib(S)\]
for generic $s\in S$.

\end{prop}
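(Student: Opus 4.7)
The plan is to check, for $s$ in a dense open subset of $S$, that (i) $X_s$ is an affine-linear bi-algebraic subvariety of $S$ of dimension $\fib(S)$ with algebraic model $A(\{s\}\times V)$, and (ii) the family $\{X_s\}$ arises as the Zariski closures of the leaves of an algebraic foliation on a dense open subset of $S$. The technical crux is the set-theoretic identity
\[
A(\{s'\}\times V)\=A(\{s\}\times V) \qquad\text{whenever }s'\in X_s;
\]
once this is in hand, everything else is a formal consequence of dimension theory and \Cref{cor:bi}.

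To prove the identity, suppose $s'\in X_s$; by definition of $X_s$ there exist $v',v''\in V$ with $A(s',v')=A(s,v'')$. Comparing the second components of the defining formula for $A$ gives $R(s')=R(s)$; comparing the first components then yields
\[
F^{alg}(s')-F^{alg}(s)\=T_{R(s)}(v''-v').
\]
Since $V$ is affine, $v+v''-v'\in V$ for every $v\in V$, whence $A(s',v)=A(s,v+v''-v')\in A(\{s\}\times V)$, and the reverse inclusion follows by symmetry. In particular the relation ``$s'\in X_s$'' is reflexive, symmetric and transitive, and coincides with ``$A(\{s\}\times V)=A(\{s'\}\times V)$''.

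Next comes the dimension count. Bi-algebraicity of $S$ together with \Cref{cor:bi} yields $\dim A(S\times V)=\dim S$, so the morphism $A|_{S\times V}\colon S\times V\to A(S\times V)$ has generic fiber of dimension $\dim V$. For generic $s$, $\dim A(\{s\}\times V)=\fib(S)$, hence $A^{-1}(A(\{s\}\times V))\subseteq S\times V$ has dimension $\fib(S)+\dim V$. By the identity, for each $s'\in X_s$ the full slice $\{s'\}\times V$ lies in this preimage, so the $\pr_1$-fibers have dimension exactly $\dim V$, forcing $\dim X_s=\fib(S)$. The identity also gives $A(X_s\times V)=A(\{s\}\times V)$; combined with the inequalities
\[
\dim X_s \;\leq\; \dim \overline{\phi(X_s^{an})}^{Zar}\;\leq\;\dim A(X_s\times V)\;=\;\fib(S)\;=\;\dim X_s,
\]
valid for any analytic component $X_s^{an}$ of $\pi^{-1}(X_s)$, \Cref{cor:bi} shows that $X_s$ is bi-algebraic, and irreducibility of the affine-linear space $A(\{s\}\times V)$ with equality of dimensions forces $\overline{\phi(X_s^{an})}^{Zar}=A(\{s\}\times V)$.

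Finally, on the dense open set $U\subseteq S$ where $A|_{S\times V}$ is flat, $\dim A(\{s\}\times V)=\fib(S)$, and the preceding generic behavior holds, the algebraic equivalence relation $s\sim s'\Leftrightarrow A(\{s\}\times V)=A(\{s'\}\times V)$ has equidimensional classes of dimension $\fib(S)$ and therefore produces a smooth algebraic foliation on $U$ whose leaves' Zariski closures are the $X_s$. The main obstacle is verifying the set-theoretic identity at the start; every subsequent step is a formal dimension count plus an invocation of \Cref{cor:bi}.
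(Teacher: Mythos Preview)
Your argument is correct and shares the paper's technical crux: the identity $A(\{s'\}\times V)=A(\{s\}\times V)$ for $s'\in X_s$, proved by the same comparison of $R$- and $F^{alg}$-components. (One cosmetic point: you assert this for every $s'\in X_s$, but $X_s$ carries a Zariski closure in its definition; the identity itself shows the pre-closure set $\{s':R(s')=R(s),\,F^{alg}(s')-F^{alg}(s)\in T_{R(s)}V^{lin}\}$ is already closed, so nothing is lost.) Where you diverge is in how you conclude bi-algebraicity of $X_s$ and construct the foliation. The paper works directly in a local period chart, showing that $\phi$ carries $\hat X_s\cap U$ onto $A(\{s\}\times V)\cap\phi(U)$ by checking both inclusions, which yields bi-algebraicity, smoothness, and irreducibility at once. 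You instead run a global fiber-dimension count and close with the chain of inequalities; note that your citation of \Cref{cor:bi} here is slightly off, since $V$ need not be the minimal affine space for $X_s$---what you actually use is the bare definition of bi-algebraic. For the foliation, the paper explicitly writes down the tangent distribution $\mathcal F_s=T_{R(s)}V^{lin}$ and observes it varies holomorphically because $R$ is algebraic, whereas your appeal to an equivalence relation with equidimensional classes leaves the subbundle condition implicit. Both routes work; the paper's is more self-contained, yours more streamlined once the dimension bookkeeping is granted.
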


\begin{proof}
Let $W$ be a minimal torus coset containing $S$ with associated translated rational subspace $V$.
For every $s$ let
\[
\hat{X}_s:=\overline{\pr_1(A_{|S\times V}^{-1}(A(\{s\}\times V))}^{Zar}
\]
and $X_s$ be the connected component of $\hat{X}_s$ containing $s.$

The first step is to show that $\hat{X}_s$ is bi-algebraic with algebraic model $A(\{s\}\times V)\subseteq A(S\times V)$. It then follows that $\hat{X}_s$ is smooth and thus  $X_s$ is irreducible.  Afterward, we need to ensure that the tangent spaces of $X_s$ vary holomorphically with $s$.

\begin{claim} $\hat{X}_s$ is bi-algebraic.
\end{claim}

Let $s\in S$ 
and $U$ be a small open set intersecting $\hat{X}_s$ such that the period map  $\phi:U\to\calP$ is a biholomorphism onto its image.
 Let $S'$ be a component of $\left(S\cap U)\times V\right) \cap \Gamma$. In particular, $S'$ corresponds to a choice of branch of logarithm on $U$ and since $S$ is bi-algebraic, it follows 
 \[
 A(S') = \phi(U) \cap A(S\times V).
 \]
 Suppose $s'\in \hat{X}_s$ is a general point and $(s',v')\in S'$. It follows from the definition of $\hat{X}_s$ that there exists $v_0\in V$ such that $A(s',v_0) = A(s,v)$ for some $v\in V$, i.e.,
 \[
  (F^{alg}(s')+T_{R(s')}v_0,2\pi i R(s')) = (F^{alg}(s)+T_{R(s)}v,2\pi iR(s)). 
 \]
 Thus 
 \[
 A(s',v') = (F^{alg}(s)+T_{R(s)}(v'+v-v_0),2\pi i R(s))\in A(\{s\}\times V).
 \]

 We conclude that 
 \[
 A(S' \cap (\hat{X}_s\times V)) \subseteq A(\{s\}\times V).
 \]

 On the other hand if $(s',v')\in S' \cap A^{-1}(\{s\}\times V)$ then by definition $s'\in \hat{X}_s$.
 Thus,  $\hat{X}_s$ agrees in period coordinates with $A(\{s\}\times V)$.
 Since $A(\{s\}\times V)$ is affine-linear, $\hat{X}_s$ is smooth and thus the connected component $X_s$ is an irreducible component.

It remains to construct a foliation.
Let $U$ be the open dense set where $S$ is smooth and $\dim X_s=\fib(S)$.

The algebraic model of $X_s$ is \[
A(\{s\}\times V) = \{ F^{alg}(s)+ T_{R(s)}V,2\pi i R(s))\,|\, v\in V\},
\]
which is an affine-linear space with tangent space $T_{R(s)}V^{lin}$.
We define a subbundle $\mathcal{F}$ of the tangent bundle $\calT_U$ via $\mathcal{F}_s = T_{R(s)}V^{lin}$, which varies holomorphically with $s$, since $R$ is algebraic.
By definition, $X_s$ is a leaf of the foliation.
\end{proof}

We have now finished the proof of \Cref{thm-characterization}. For the convenience of the reader, we give references to the different statements.

\begin{proof}[Proof of \Cref{thm-characterization}]
\Cref{cor:bi} proves the implication $(1)\Leftrightarrow (2)$. From \Cref{lemma:swept} it follows that $(3)\Rightarrow (1)$ and the final implication $(1)\Rightarrow (3)$ is part of \Cref{prop:foliation}.
\end{proof}

Additionally, \Cref{prop:foliation} establishes the main part of \Cref{prop:swept}. We can now finish the proof.
\begin{proof}[Proof of \Cref{prop:swept}] 
It remains to show that if $S$ is any bi-algebraic variety with positive fiber dimension, then the algebraic model is a ruled variety.
Since $\fib S>0$, the algebraic model of $S$ contains positive dimensional affine spaces through every point.
More precisely, if $V$ is the minimal affine space associated with $S$, then $A(\{s\}\times V)$ is affine-linear of positive dimension.
On a dense open set $U\subseteq S$, the dimension of $A(\{s\}\times V)$  is constant.
The ruling is provided by considering the image of the map 
\[
s\mapsto \CC\cdot A(\{s\}\times V)
\]
from $U$ into the Grassmannian of $\fib(S)+1$-planes in $\calP = \CC^{\dim \St}$. Here, we used that if $U$ is small enough, then  $U$ is not in the locus of exact differentials. In particular, $A\{s\}\times V$ is affine, but not linear, of dimension $\fib(S)$.  

\end{proof}

\begin{remark}
In the setup of  \Cref{thm-characterization},
if $S$ is a $\CC^*$-invariant bi-algebraic variety, $S$ descends to the projectivized stratum $\PP\St$.  It follows from the proof of \Cref{prop:swept} that, if $S\subsetneq \Exc$,  then  $S$ is swept out by linear spaces of dimension $\fib(S)+1$. Thus, the image in the projectivized stratum is ruled as soon as $\fib(S)>0$.
\end{remark}

If the fiber rank of a bi-algebraic variety $S$ is zero, then 
\Cref{prop:foliation} does not impose any restrictions on $S$. On the other hand, if the residues are generic, i.e., not contained in a proper $\QQ$-vector space, then the foliation is always non-trivial.

\begin{prop}\label{prop:genfib}
Let $S\subseteq \St$ be a positive-dimensional bi-algebraic variety in genus zero. Assume that $\St$ has at least two zeros and that the residues are generically $\QQ$-linearly independent on $S$. Then $S$ has positive fiber rank. In particular, any algebraic model of $S$ is a ruled variety.
\end{prop}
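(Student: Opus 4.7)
My plan is to show that $\dim T_{R(s)}V^{lin} > 0$ for generic $s\in S$, where $V^{lin}\subseteq\CC^k$ denotes the linear part of the minimal affine space $V$ associated with $S$; since $T_R$ is $\CC$-linear in $v$, this is equivalent to $\fib(S)>0$. The structural observation driving the argument is that $V^{lin}$ is a rational subspace: it is the Lie algebra of the smallest subtorus $W'\subseteq(\CC^*)^k$ containing $\pr_1(S)$, and every subtorus of $(\CC^*)^k$ is cut out on its Lie algebra by linear equations over $\ZZ$. In particular $V^{lin}$ admits a basis consisting of integral vectors.

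The main step is a direct contradiction. Assuming $\fib(S)=0$, one has $V^{lin}\subseteq \ker T_{R(s)}$ on a Zariski-dense open subset of $S$. Take any non-zero integral vector $v=(v_1,\ldots,v_m)\in V^{lin}\cap\ZZ^k$ with $v_i\in\ZZ^{n+1}$, and choose an index $i$ with $v_i\neq 0$. Then the equation $T_{R(s)}(v)=0$ reads
\[
\sum_{j=0}^{n} v_{ij}\, R_j(s) \;=\; 0,
\]
which is a non-trivial $\QQ$-linear relation on $R_0(s),\ldots,R_n(s)$ holding on a Zariski-dense subset of $S$, contradicting the hypothesis of generic $\QQ$-linear independence of the residues.

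The step above crucially assumes $V^{lin}\neq 0$, and I expect this to be the main obstacle. To rule out $V^{lin}=0$, I would argue as follows. If $V^{lin}=0$, then $\pr_1(S)\subseteq(\CC^*)^k$ reduces to a single point; using the inversion formulas $x_i=1-w_{i0}$ and $y_j=x_1/(1-w_{1j})$ recalled at the beginning of \Cref{sec:setupax}, the fiber of $\pr_1\colon\St\to(\CC^*)^k$ over any point is a single $\CC^*$-orbit, so $S$ would coincide with such an orbit. On a $\CC^*$-orbit each residue has the form $R_j(s)=\lambda(s)\cdot c_j$ for constants $c_j\in\CC$, so all $R_j$ are proportional to the single function $\lambda(s)$; together with the assumption of at least two zeros ($m\geq 1$) and generic $\QQ$-linear independence of $R_0,\ldots,R_n$ on $S$, this uniform proportionality yields the desired contradiction. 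The final claim that any algebraic model of $S$ is a ruled variety is then immediate from \Cref{prop:swept}, which applies as soon as $\fib(S)>0$.
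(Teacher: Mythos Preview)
Your main step (the case $V^{lin}\neq 0$) is the paper's argument verbatim: both take a non-zero rational vector $e=(e_1,\ldots,e_m)\in V^{lin}$, with $e_i\in\QQ^{n+1}$, and from $T_{R(s)}e=0$ read off $R(s)\cdot e_i=0$ for all $i$, forcing $R(s)$ into a proper rational subspace of~$\calR$.

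The gap is in your treatment of the degenerate case $V^{lin}=0$. You correctly deduce from the inversion formulas $x_i=1-w_{i0}$ and $y_j=x_1/(1-w_{1j})$ that the fibre of $\pr_1$ over a point of $(\CC^*)^k$ is a single $\CC^*$-orbit, and that on such an orbit $R_j(s)=\lambda(s)\,c_j$ for fixed constants $c_j\in\CC$. But the conclusion you then draw is wrong: proportionality of the $R_j$ as \emph{functions} on $S$ does not contradict $\QQ$-linear independence of the \emph{values} $R_0(s),\ldots,R_n(s)$ at a generic~$s$. For $\lambda(s)\neq 0$ the numbers $\lambda(s)c_0,\ldots,\lambda(s)c_n$ are $\QQ$-linearly independent precisely when the constants $c_0,\ldots,c_n\in\CC$ are, and nothing prevents that; the hypothesis $m\geq 1$ contributes nothing here. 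The paper does not invoke the residue hypothesis in this branch at all: once the $w_{ij}$, and hence all $x_i$ and $y_j$, are shown to be constant on $S$, it directly concludes $\dim S=0$ and appeals to the positive-dimensionality assumption instead.
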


\begin{proof}
Let $S$ be a bi-algebraic variety such that the residues are generically $\QQ$-linearly independent. In particular, $S$ is not in the locus of exact differentials.

Recall that in \Cref{sec-periodsetup} we embedded $\St$ in $(\CC^*)^{m(n+1)} \times \CC^*$.
Assume that  $\St$ has at least two zeros, then $m>0$. Let $W'$ be the smallest torus coset containing $\pr_1(S)$ with associated affine space $V$.
As before, let $W=W'\times \CC^*$.
Suppose $S$  is a bi-algebraic variety with fiber rank zero, i.e.,
\[
T_{R(s)}V^{lin} =\{0\} \text{ for all } s\in S,
\]
where $V^{lin}$ is the linear subspace of the affine space $V$.
There are two cases to consider. First assume $ \dim W' = \dim V >0$. In that case let $e=(e_1,\ldots,e_m)\in V^{lin}(\QQ), e_i\in \QQ^{n+1}$ be a non-zero rational point.  Then  $R(s)$ is contained in the proper $\QQ$-subspace
\[
\{ r \in \calR\,|\, r\cdot e_1 = \ldots =r\cdot e_m = 0\}\subsetneq \calR
\]
for any $s\in S,$
contradicting that the residues are $\QQ$-linearly independent.

The remaining case is $\dim W'=0$, i.e., $\pr_1(S)$ is a point.
This implies that 
$
w_{ij} = \dfrac{y_j-x_i}{y_j}
$
is constant on $S$ for all $i,j$.
In particular  $w_{i0} = 1-x_i$ is constant. Then, it follows that the functions $y_j$ are constant for all $j$. It follows that $S$ has dimension zero.

\end{proof}

\section{\texorpdfstring{$\overline{\QQ}$-bi-algebraic geometry in genus zero}{}} \label{sec:Qbar} For the rest of this section, $\St$ is a stratum in genus zero. We now turn from general bi-algebraic varieties to $\overline{\QQ}$-bi-algebraic geometry. Recall that a differential $(X,\omega)\in \St$ defined over $\overline{\QQ}$ is an arithmetic point if
\[
\left[ \int_{x_0}^{x_1} \omega:\ldots:\int_{x_0}^{x_m} \omega: 2\pi i R_0:\ldots 2\pi i R_n\right]\in \PP^{m+n+1}(\overline{\QQ}).
\]

\begin{lemma}[Structure of arithmetic points]
\label{lemma:arithmetic}
Let $(X,\omega)\in\St $ be defined over $\overline{\QQ}$.
Then $(X,[\omega])\in\PSt)$ is arithmetic if and only if 
 one of the following is true.
 \begin{enumerate}
\item Either $R=0$ or
\item $F^{alg}=0,\, F^{log}_j\in 2\pi i\langle R_0,\ldots,R_n\rangle_{\QQ}$ for all $j$.

 \end{enumerate}
 In particular, either $(X,\omega)$ is in the log stratum $\logSt$ or an exact differential. 

\end{lemma}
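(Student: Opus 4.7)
The plan is to translate the projective arithmeticity condition into a $\overline{\QQ}$-linear relation among logarithms and apply Baker's theorem via a well-chosen $\QQ$-basis of the log span.

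First I would unpack arithmeticity: a projective point $[z_1:\ldots:z_N]$ lies in $\PP^{N-1}(\overline{\QQ})$ iff some $c\in\CC^*$ makes $cz_i\in\overline{\QQ}$ for all $i$. Applied to period coordinates, this says $(X,[\omega])$ is arithmetic iff there exists $c\in\CC^*$ with $c(F^{alg}_j+F^{log}_j)\in\overline{\QQ}$ for each $j$ and $c\cdot 2\pi iR_k\in\overline{\QQ}$ for each $k$. I would then split on whether $R\equiv 0$. If $R=0$, then $F^{log}_j=\sum_kR_k\log w_{jk}=0$ and periods coincide with the algebraic $F^{alg}_j$, so arithmeticity is immediate and $(X,\omega)\in\Exc$; this is case (1).

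The main case is that some $R_k\neq 0$. Then $c\cdot 2\pi iR_k\in\overline{\QQ}^*$ forces $c\in(2\pi i)^{-1}\overline{\QQ}^*$, and rescaling reduces to showing $F^{alg}_j+F^{log}_j\in 2\pi i\overline{\QQ}$ for each $j$. Using $2\pi i=2\log(-1)$ on the principal branch, this is the $\overline{\QQ}$-linear relation
\[
F^{alg}_j\cdot 1+\sum_k R_k\log w_{jk}-2\delta_j\log(-1)=0
\]
for some $\delta_j\in\overline{\QQ}$. I would next choose a $\QQ$-basis $M_1,\ldots,M_s$ of the $\QQ$-span of $\{\log(-1),\log w_{j0},\ldots,\log w_{jn}\}$, noting that each $M_l$ is itself a logarithm of an algebraic number (a $\QQ$-combination of such logs). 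Expanding $\log w_{jk}=\sum_l\lambda_{kl}M_l$ and $\log(-1)=\sum_l\mu_l M_l$ with rational $\lambda_{kl},\mu_l$, the relation rewrites as $F^{alg}_j+\sum_l\bigl(\sum_k R_k\lambda_{kl}-2\delta_j\mu_l\bigr)M_l=0$.

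Baker's theorem (strong form) states that $\QQ$-linearly independent logarithms of nonzero algebraic numbers, together with $1$, are $\overline{\QQ}$-linearly independent. Applying this to $1,M_1,\ldots,M_s$, every coefficient in the expansion must vanish: $F^{alg}_j=0$ (so $(X,\omega)\in\logSt$) and $\sum_kR_k\lambda_{kl}=2\delta_j\mu_l$ for each $l$. Because $\log(-1)=\pi i\neq 0$ some $\mu_l$ is nonzero, and solving in that component gives $\delta_j=\sum_k(\lambda_{kl}/(2\mu_l))R_k\in\langle R_0,\ldots,R_n\rangle_\QQ$, whence $F^{log}_j=2\pi i\delta_j\in 2\pi i\langle R_0,\ldots,R_n\rangle_\QQ$. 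The converse directions are routine in both cases, since the periods reduce (up to $2\pi i$) to $\QQ$-combinations of algebraic residues, so dividing the projective coordinates by $2\pi i$ yields algebraic entries. The ``in particular'' dichotomy is then immediate from the case split. The hard part is the clean invocation of Baker; the key trick is to pre-compute a $\QQ$-basis of the log span so that a single application simultaneously extracts the vanishing of $F^{alg}_j$ and the rational expression of $\delta_j$ in terms of the residues.
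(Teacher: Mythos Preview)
Your proposal is correct and follows essentially the same route as the paper: reduce to a $\overline{\QQ}$-linear relation among $1$ and logarithms of algebraic numbers, then invoke Baker's theorem to force $F^{alg}_j=0$ and push $F^{log}_j$ into $2\pi i\langle R_0,\dots,R_n\rangle_\QQ$. The only cosmetic difference is that the paper eliminates $\QQ$-dependent logarithms one at a time via repeated applications of Baker, whereas you pre-select a $\QQ$-basis of the log span and apply Baker once; these are equivalent packagings of the same argument.
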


\begin{proof}
Suppose that $(X,\omega)$ is arithmetic. Then there exists $\alpha\in\CC^*$ such that
\[
\alpha \cdot\int_{x_0}^{x_j}\omega, \alpha\cdot2\pi iR_l \in \overline{\QQ}
\]
for all $j,l$.
First, suppose $(X,\omega)$ is not in the locus of exact differentials. After renumbering, we can assume $R_0\neq 0$, and since $\omega$ is defined over  $\overline{\QQ}$, the residue $R_0$ is an algebraic number.
 In particular  $\alpha = \dfrac{1}{2\pi i}q$ for some $q\in \overline{\QQ}\setminus \{0\}$ and thus
\begin{equation*}
 \dfrac{1}{2\pi i}q\int_{x_0}^{x_j} \omega = \dfrac{1}{2\pi i}q\left(F^{alg}_j(x,y,\lambda)+\sum_{l=0}^m R_l \log w_{jl}\right)\in\overline{\QQ} \text{ for any j}.
\end{equation*}

By Baker's theorem on  linear forms in logarithms, see \Cref{thm:baker}, it follows that $F^{alg}=0$ and either
all coefficients $R_l, l=0,\ldots, n$ are zero or
there has to be a $\QQ$-linear relation among
\[
\log w_{j1},\ldots \log w_{jl}, 2\pi i.
\]
After rewriting some $\log w_{jl}$ as a $\QQ$-linear combination of the remaining terms, one can apply Baker's theorem again. 
One then proceeds inductively. At each step, one can remove a term $\log w_{kl}$ until either the whole expression is zero or only one term $\log w_{kl}$ is left. In the latter case, $\log w_{jl} \in 2\pi i \QQ$. 
This implies (2).

It remains to address the case that $S\in\Exc$.
In this case it follows that $R=0$ and $F^{alg}_j \in \overline{\QQ}$ for all $j$.  In particular any $(X,\omega)$ defined over $\overline{\QQ}$ is arithmetic.

The converse direction follows from the definition of an arithmetic point.
\end{proof}
\begin{thm}[Linear forms in logarithms, \texorpdfstring{\cite[Thm 2.1]{Baker}}{}]\label{thm:baker}

Suppose \[\alpha_1,\ldots,\alpha_n\in\overline{\QQ}\setminus \{0\}
\] are non-zero algebraic integers such that \[
\log \alpha_1,\ldots,\log \alpha_n\]
are $\QQ$-linearly independent. Then $1, \log \alpha_1,\ldots,\log \alpha_n$ are $\overline{\QQ}$-linearly independent.
\end{thm}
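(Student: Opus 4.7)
The statement recorded here is Baker's classical theorem on linear forms in logarithms; the author is importing it as a black box, so my "proof" would likewise consist of citing Baker's original paper rather than reproving a foundational result of transcendence theory. Still, let me sketch the standard strategy, which is the template for nearly all effective transcendence proofs.

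The argument proceeds by contradiction. Suppose $\log\alpha_1,\ldots,\log\alpha_n$ are $\QQ$-linearly independent but that there exist $\beta_0,\beta_1,\ldots,\beta_n\in\overline{\QQ}$, not all zero, with
\[
\beta_0+\beta_1\log\alpha_1+\cdots+\beta_n\log\alpha_n=0.
\]
One may reduce to the case $\beta_0\neq 0$ (the purely homogeneous case being Gelfond--Schneider, which is comparatively easier); after solving for $\log\alpha_n$ one obtains an identity that allows $\alpha_n^z$ to be written multiplicatively in terms of the remaining $\alpha_i^z$, producing strong arithmetic constraints on values of the associated exponential polynomials.

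The heart of the proof is the construction, via Siegel's lemma, of an auxiliary polynomial $\Phi(z_0,z_1,\ldots,z_{n-1})$ with algebraic integer coefficients of controlled height such that the entire function
\[
F(z)=\Phi\bigl(z,\alpha_1^z,\ldots,\alpha_{n-1}^z\bigr)
\]
vanishes to high order at a long arithmetic progression of integer points. One then extrapolates: using the maximum modulus principle on a large disk, together with the growth estimates coming from the bounds on the coefficients, one shows that $F$ must in fact vanish at many additional points as well.

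The contradiction arises from confronting two incompatible bounds on some specific nonzero value $F(z_0)$ (or a derivative thereof): an analytic upper bound obtained by the Schwarz lemma / maximum principle, and an arithmetic lower bound supplied by Liouville's inequality applied to a nonzero algebraic number of controlled height and degree. The main obstacle is the delicate bookkeeping in the extrapolation step, where one must iteratively enlarge the vanishing set while keeping the estimates on heights and degrees under control; this is exactly the technical innovation of Baker over Gelfond--Schneider. For the application in \Cref{lemma:arithmetic}, no such reworking is needed — we simply quote the theorem as stated.
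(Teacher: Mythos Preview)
Your reading is correct: the paper does not prove this theorem at all but simply imports it from Baker's monograph as a black box, exactly as you surmise. Your sketch of the auxiliary-function method is accurate and goes beyond what the paper provides, so there is nothing to compare against; the paper's ``proof'' is the citation itself.
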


As a consequence, every arithmetic point is contained in  $\logSt\cup \Exc$. For strata with only  simple poles, a single pole, or a single zero, $\logSt\cup \Exc$ coincides with the whole stratum. For any other stratum, $\logSt\cup \Exc$ is always a proper subvariety, and  arithmetic points are never Zariski dense. This proves \Cref{cor:arithmetic}.

We now turn to the Andr{\'e}-Oort conjecture \Cref{conj:andreoort} in genus zero. 
 The second part of the conjecture follows from the structure theorem for bi-algebraic varieties.
\begin{cor}Suppose $S\subseteq \St$ is bi-algebraic and contains a Zariski-dense set of arithmetic points. If not all residues are identically zero on $S$. Then the fibers of the residue map are affine-linear bi-algebraic varieties.

\end{cor}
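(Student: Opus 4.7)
The plan is to combine the structure of arithmetic points from \Cref{lemma:arithmetic} with the structure theorem \Cref{prop:simple} for bi-algebraic varieties in the log stratum $\logSt$. The idea is that the Zariski density of arithmetic points in $S$ forces $S$ into $\logSt$, after which \Cref{prop:simple} immediately yields the conclusion.

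First I would observe that both $\logSt = \{s \in \St \mid F^{alg}(s) = 0\}$ and $\Exc = \{s \in \St \mid R(s) = 0\}$ are algebraic subvarieties of $\St$, as they are cut out by the vanishing of the algebraic functions $F^{alg}$ and $R$, respectively. By \Cref{lemma:arithmetic}, every arithmetic point of $\St$ lies in $\logSt \cup \Exc$. Since the arithmetic points in $S$ are Zariski-dense and contained in the algebraic set $\logSt \cup \Exc$, we conclude $S \subseteq \logSt \cup \Exc$.

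Next, since $S$ is bi-algebraic and hence irreducible (by the standing convention for bi-algebraic varieties as used throughout \Cref{thm-characterization} and \Cref{thm:bialg_structure}), it must be entirely contained in one of the two irreducible branches: either $S \subseteq \logSt$ or $S \subseteq \Exc$. The hypothesis that the residues do not vanish identically on $S$ rules out the latter, so $S \subseteq \logSt$.

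Finally, I would invoke \Cref{prop:simple}: a bi-algebraic subvariety of $\logSt$ has the property that the fibers of the restricted residue map $R|_S : S \to \calR$ are affine-linear bi-algebraic subvarieties, which is exactly the desired conclusion. There is no real obstacle here beyond recognizing that $\logSt$ and $\Exc$ are genuinely algebraic (so that irreducibility of $S$ applies) and that the hypothesis on residues pinpoints the correct branch; the heavy lifting has already been done in \Cref{lemma:arithmetic} and \Cref{prop:simple}.
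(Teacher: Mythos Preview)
Your proposal is correct and follows essentially the same route as the paper: use \Cref{lemma:arithmetic} to force $S\subseteq\logSt\cup\Exc$, rule out $\Exc$ via the residue hypothesis, and then apply \Cref{prop:simple}. Your only addition is spelling out the Zariski-density/irreducibility step that the paper leaves implicit; note, though, that $\logSt$ and $\Exc$ need not be irreducible themselves---what matters is that they are closed, so an irreducible $S$ contained in their union lies in one of them.
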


\begin{proof}
Since not all residues vanish identically on $S$, the variety $S$ is not contained in $\Exc$. By \Cref{lemma:arithmetic}, $S$ is contained in the log stratum $\logSt$. Hence, the structure theorem for bi-algebraic varieties in $\logSt$ \Cref{prop:simple} implies that the fibers of the residue map are affine-linear.

\end{proof}

We now turn to the proof of \Cref{thm:andreoort}.
For the convenience of the reader, we state it again.
\begin{thm}[= \Cref{thm:andreoort}]
\label{thm:andreoort2}
Let $S\subseteq \St$ be an irreducible algebraic variety in a stratum in genus zero. Suppose $S$ contains a Zariski dense set of arithmetic points, and one of the following is true
\begin{enumerate}
    \item $S$ is bi-algebraic,
    \item $S$ has constant residues,
    \item $S\subseteq \Exc$ or,
    \item $\fib(S) =0$.
\end{enumerate}
Then $S$ is $\overline{\QQ}$-bi-algebraic.
\end{thm}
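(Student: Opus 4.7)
The plan is to first reduce via Lemma \ref{lemma:arithmetic}, which asserts that every arithmetic point of $\St$ lies in $\logSt \cup \Exc$. Since $S$ is irreducible and contains a Zariski dense set of arithmetic points, either $S \subseteq \Exc$ or $S \subseteq \logSt$; the Zariski density of $\overline{\QQ}$-points also guarantees that $S$ itself descends to $\overline{\QQ}$. The case $S \subseteq \Exc$ is immediate: $F^{log}$ vanishes on $\Exc$, so the period map coincides there with the $\overline{\QQ}$-algebraic map $\alpha$; Proposition \ref{cor:low}(1) gives bi-algebraicity, and the algebraic model $\overline{\alpha(S)}^{Zar}$ is $\overline{\QQ}$-defined by algebraicity of $\alpha$ and descent of $S$. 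This disposes of (3) entirely and of (1), (2), (4) whenever $R \equiv 0$.

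Suppose now $S \subseteq \logSt$ and $R \not\equiv 0$, so $F^{alg} \equiv 0$ on $S$. I first establish bi-algebraicity in each remaining hypothesis: (1) is given; (4) is immediate from Lemma \ref{lemma:fib}(3); and (2) requires an argument because constant residues only give $\dim R(S) = 0$, so by \eqref{eq:fibdim} bi-algebraicity reduces to $\dim S = \fib(S)$. The inequality $\dim S \leq \fib(S)$ is Lemma \ref{lemma:fib}(1), so the task is to prove $\fib(S) \leq \dim S$. For this I would apply Theorem \ref{thm:ax-residue} to the minimal residue coset $Y$ containing $S$, choosing the test subvariety $Z \subseteq \Omega_Y$ so that it encodes the arithmetic constraint $F^{log}(\tilde p) \in \left(2\pi i \langle R_0, \ldots, R_n\rangle_\QQ\right)^m$ guaranteed by Lemma \ref{lemma:arithmetic} at the Zariski dense arithmetic points; if the dimensional identity failed, the graph of the period map on $\widetilde S$ would intersect $Z$ atypically inside $\Omega_Y \cap \Sigma_Y$, contradicting Ax-Schanuel and the minimality of $Y$.

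Once $S$ is bi-algebraic, I appeal to the structure theorem \ref{prop:simple} to write $S$ as a component of $W \cap R^{-1}(Z) \cap \logSt$, with $W$ the minimal torus coset through $S$ and $Z = \overline{R(S)}^{Zar}$. Each ingredient then descends to $\overline{\QQ}$: the algebraic map $R$ is defined over $\QQ$ and sends $\overline{\QQ}$-points of $S$ to $\overline{\QQ}$-points of $\calR$, so the Zariski dense arithmetic points of $S$ produce Zariski dense $\overline{\QQ}$-points of $Z$; the monomial relations defining $W$ have constants obtained by evaluation at any point of $S$, hence are $\overline{\QQ}$-valued; and the affine subspace $V$ associated with $W$ inherits a $\overline{\QQ}$-structure. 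Feeding these into $\overline{A(S \times V)}^{Zar}$ yields an algebraic model over $\overline{\QQ}$, completing the proof.

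The main obstacle is subcase (2): translating the hypothesis ``constant residues and Zariski dense arithmetic points'' into the dimensional identity $\dim S = \fib(S)$. This is delicate because the images of the countably many arithmetic points of $S$ under $\phi$ need not be Zariski dense in $\phi(\widetilde S)$, so one cannot read the algebraic model off the arithmetic points directly; the bridge must be an Ax-Schanuel style unlikely-intersection statement, and one must set up $\Omega_Y$ and the test subvariety $Z$ so that the desired dimensional identity is equivalent to the typicality of the intersection. A subsidiary point requiring care is the descent of the minimal torus coset $W$ to $\overline{\QQ}$, since ``minimality'' only pins down the $\QQ$-rank of its defining lattice and one must check that the constants in its monomial equations are algebraic.
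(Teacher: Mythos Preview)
Your reduction via Lemma~\ref{lemma:arithmetic} and the handling of cases (3) and (4) match the paper. There are two genuine gaps.

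\textbf{Case (2).} You propose to obtain $\fib(S)\leq\dim S$ from the Ax--Schanuel theorem~\ref{thm:ax-residue} by encoding the constraint $F^{log}_j\in 2\pi i\langle R_0,\ldots,R_n\rangle_\QQ$ as a single algebraic test variety $Z\subseteq\Omega_Y$. But that constraint is a \emph{countable union} of affine subspaces, and no single algebraic $Z$ captures a Zariski-dense subset of the arithmetic locus; you never specify $Z$, and it is not clear any choice yields an atypical intersection. The paper does not use Ax--Schanuel here. Instead it passes to coordinates $\hat w_{kj}$ adapted to a $\QQ$-basis $R_0,\ldots,R_l$ of the residues; Baker's theorem then forces each $\hat w_{kj}$ at an arithmetic point to be a \emph{root of unity}, so the image $\hat S=\sigma(S)$ contains a Zariski-dense set of torsion points in a torus. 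Multiplicative Manin--Mumford now forces $\hat S$ to be a torsion coset, and one reads off directly that $\phi(U)$ is open in $(T_R(V),2\pi iR)$, whence $\dim S=\dim A(S\times V)$. The missing ingredient is Manin--Mumford, which is exactly the tool that converts countable arithmetic data (torsion points) into an algebraic conclusion; Ax--Schanuel, which concerns a fixed algebraic $Z$, does not play that role.

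\textbf{$\overline{\QQ}$-descent in case (1).} You argue that $W$, $Z=\overline{R(S)}^{Zar}$ and $V$ all descend to $\overline{\QQ}$ and then ``feed these into $\overline{A(S\times V)}^{Zar}$''. But $V$ is a coset $v_0+V_0$ of a $\QQ$-vector space by a vector with $\exp(2\pi i v_0)\in\overline{\QQ}$; such $v_0$ is typically transcendental, so $V$ is \emph{not} $\overline{\QQ}$-defined, and neither is $T_{R(s)}(v_0)$. Knowing that $S$ and $W$ descend to $\overline{\QQ}$ is therefore insufficient. The paper uses the full strength of Lemma~\ref{lemma:arithmetic}: at each arithmetic $p_j$ the \emph{period itself} lies in $(2\pi i\overline{\QQ})^{m}$, giving an explicit $q_j\in(2\pi i\overline{\QQ})^k$ with $T_{R(p_j)}V=q_j+T_{R(p_j)}V_0$. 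Choosing any preimage $q_j'\in V$, the set $\{(p_j,q_j'+v):j,\,v\in V_0\}$ is Zariski dense in $S\times V$ (its closure contains each $\{p_j\}\times V$), and its image under $A$ consists of points over $2\pi i\overline{\QQ}$. This, not the descent of $V$, is what makes $\overline{A(S\times V)}^{Zar}$ defined over $\overline{\QQ}$.
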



    

\begin{proof}
We start with  (3). Note that $S$ is defined over $\overline{\QQ}$ since it contains a Zariski dense set of $\overline{\QQ}$-rational points. Every subvariety $S\subseteq \Exc$ defined  over $\overline{\QQ}$ is $\overline{\QQ}$-bi-algebraic. 

For the rest of the proof, assume $S$ is not contained in $\Exc$.
We start with the proof of (1). Let $V$ be the minimal affine space associated with $S$ and $W$ a minimal torus coset containing $S$.
Since $S$ is bi-algebraic, 
\[
\dim A(S\times V) = \dim S.
\]
It suffices to show that $\overline{A(S\times V)}^{Zar}$ is defined over $2\pi i\overline{\QQ}$.

Let $X = \{p_j, j\in j\}\subseteq S$ be a Zariski dense set of arithmetic points.
For every $p_j$ there exists, by \Cref{lemma:arithmetic},  $q_j\in (2\pi i\overline{\QQ})^{k}$ such that 
\[
T_{R(p_j)}V = q_j + T_{R(p_j)}V_0,
\]
where $V$ is a translate of the $\QQ$-vector space $V_0.$
We choose some preimage $q_j'\in V$ such that $T_{R(p_j)}q_j' = q_j$. We do not claim that $q_j'$ is defined over a number field.
Then 
\[
\{(p_j, q_j'+v)\,|\, j \in J, v\in V_0\}\subseteq S\times V
\]
is Zariski dense in $S\times V$, since the Zariski closure contains $\{p_j\} \times V$ for all $j\in I$.

Therefore,
\[
\begin{gathered}
\overline{A(S\times V)}^{Zar} = \overline{A(\{(p_j, q_j'+v)\,|\, j\in J, v\in V_0\})}^{Zar} \\
=   \overline{  \{ (q_j + T_{R(p_j)}v, 2\pi i R(p_j))\,|\, j\in J, v\in V_0\} }^{Zar}
\end{gathered}
\]
The right-hand side is defined over $2\pi i\overline{\QQ}$.

Next, we address $(2)$.
Let $V$ be the minimal affine space associated with $S$.
The goal is to show that \[
\dim A(S\times V) = \dim T_R(V) = \dim S,
\] since then, $S$ is bi-algebraic, and we can apply (1).

For the rest of the proof, write $R$ and $\lambda$ for the constant value both take on $S$.
Let \[
l+1:= \rank \langle R_0,\ldots,R_n\rangle_{\QQ}\]
on $S$. After renumbering, assume that $R_0,\ldots, R_l$ is a $\QQ$-basis for the residues on $S$ and $l\geq 0.$
Using the $\QQ$-linear relations among the residues, one can rewrite
\[
\int_{x_0}^{x_k} \omega = F^{alg}_j + \sum_{j=0}^l R_j\log \hat{w}_{kj},
\]
where $\hat{w}_{kj}$ is a $\QQ$-linear combination of $w_{kj}$.
Set \[
\hat{v}:= (\hat{v}_{kj})_{k,j}, \hat{v}_{kj} := \log \hat{w}_{kj},\quad \hat{w} := (\hat{w}_{kj})_{k,j}.
\]
Let $\hat{W}$ be an algebraic torus with coordinates $\hat{w}$ and $\hat{V}$ the universal covering with coordinates $\hat{v}$.
We introduce \[
\hat{\Lambda}:= \hat{W}  \times \hat{V}\]
and \[
\begin{gathered}
\sigma:W\to \hat{W},\\
s\mapsto \hat{w}(s)
\end{gathered}
\]

Let $\hat{\Gamma}$ be the graph of the exponential function on $\hat{V}$ and
\[\hat{S}:=\sigma(S)\subseteq \hat{W}.
\]

The algebraic period map  can be rewritten  on $\hat{\Lambda}$ as 
\[
\begin{gathered}
\hat{A}:\hat{\Lambda}\rightarrow \calP,\\   (\hat{w},\hat{v})\mapsto( \hat{T}_{R}\hat{v}, 2\pi i R),
\end{gathered}
\]
where 
\[
\hat{T}_{R}\hat{v} = 2\pi i \left(\sum_{j=0}^{l} R_j \hat{v}_{1j},\ldots,\sum_{j=0}^l R_j\hat{v}_{nj}\right)
\]
In particular,
\[
\hat{A}(\sigma(s),\hat{v})  =A(s,v).
\]

By restricting $\hat{A}$ to the graph of the exponential function, it follows 
\[
\dim \hat{S} = \dim S.
\]

 We now claim that for a Zariski dense set of  arithmetic  points,  $\log \hat{w}_{kj}$ is a root of unity for all $k$ and $j\leq l$. To see this, note that by Baker's theorem $\log \hat{w}_{kj}\in 2\pi i \overline{\QQ}$. Otherwise, there would be an additional $\QQ$-linear relation among $R_0,\ldots, R_l$. Thus $\hat{w}_{kl}$ is a root of unity.
 Therefore, $\hat{S}\subseteq \hat{W}$ contains a Zariski dense set of torsion points.  It now follows from multiplicative Manin-Mumford, see for example \cite[Thm. 3.3]{Pila}, that the Zariski closure of $\hat{S}$ is a {\em torsion coset} $W'\subseteq \hat{W}$, i.e., a translate of a subtorus by a root of unity. Since $\hat{S}$ is constructible, it contains a Zariski dense open set of $W'$. 
 Choose a generic point $s\in S$ such that $\sigma$ is smooth in a neighborhood $U\subseteq S$ of $s$ and hence $\sigma(U)$ is open in $\hat{S}$. Let $S'$ be a component of $(\sigma(U)  \times \hat{V})\cap \hat{\Gamma}$. In particular, $S'$ is contained in $\hat{S}\times V'$, where $V'$ is the associated affine subspace to $W'$.
 Then, the image of the period map\[
 \phi(U) = \hat{A}(S')
 \]contains an (analytically) open subset of \[(T_R(V),2\pi iR)= (\hat{T}_R(V'),2\pi iR).
 \]
 Hence, $S$ is locally in period coordinate defined by $(T_R(V),2\pi iR)=A(S\times V)$ and therefore bi-algebraic.

To see (3), note that it follows from  $\fib(S)=0$ that $S$ is bi-algebraic by \Cref{lemma:fib}, (3) and thus we can apply (1).

\end{proof}
We end this section with an example of  a linear subvariety containing a Zariski dense set of arithmetic points.  Non-linear examples can be constructed  using subvarieties of the locus of exact differentials $\Exc$, but all of them have fiber rank zero. We do not know an example of a non-linear bi-algebraic variety with positive fiber rank and a Zariski dense set of arithmetic points.

\begin{exax}[A curve with a Zariski dense set of arithmetic points]
\label{ex:dense} If $\St$ has a single zero, then every algebraic variety $S$ defined over $\overline{\QQ}$ has a dense set of arithmetic points. In fact, every $S(\overline{\QQ})$-rational point is arithmetic. Using covering constructions, this example can be propagated into higher-dimensional strata. We will explain covering constructions in general in \Cref{sec:examples}.
Here, we will discuss a particular case. Consider the stratum $\St[(-1^4,1^2)]$ parametrized by 
\[
\omega = \lambda\dfrac{(z-1)(z-x_1)}{z(z-y_1)(z-y_2)}dz
\]
and consider the subvariety  $S = \{x_1 = -1, y_1= -y_2\}$ defined by differentials 
\[
\omega = \lambda\dfrac{(z-1)(z+1)}{z(z-\alpha)(z+\alpha)}dz
\]
This example arises by starting with $\St[(-1^3,1)]$ and pulling back differentials along the double cover $z\mapsto z^2$.
Note that here, we use a different normalization than in \Cref{sec-periodsetup}.
We claim that $r_1=r_2$, i.e., the residues at $\alpha$ and $-\alpha$ agree. This can be checked by hand, or it will follow from the description as covering construction. 
We embed  $\St[(-1^4,1^2)]$ into $(\CC^*)^4$ via 
\[
\iota:( x_1,y_1,y_2, \lambda)\mapsto (w_1 = x_1, w_2= \dfrac{x_1-y_1}{1-y_1}, w_3 = \dfrac{x_1-y_2}{1-y_2},\lambda).
\]
Local period coordinates are given by 
\[
\int_{-1}^1 \omega = R_0 \log w_1 + R_1 \log w_2 + R_2\log w_3, 2\pi i R_0, 2\pi i R_1, 2\pi i R_2. 
\]

The subvariety $S$ is embedded in $(\CC^*)^4$ as 
\[
w_1 =-1, w_2 w_3=1.
\]
It then follows from $\log w_1 \in 2\pi i \ZZ, R_2=R_3$, that a point in $S$ has local period coordinates given by

\[
 2\pi i(kR_0 , R_0,  R_1,R_1),
\]
where $k\in \ZZ$. In particular $(X,\omega)\in S$ is arithmetic if and only if $(X,\omega)$ is defined over $\overline{\QQ}$.
\end{exax}

\section{Examples of bi-algebraic varieties}\label{sec:examples}
We collect known examples of bi-algebraic varieties in strata of differentials.
For the rest of this section,   $\St$ can be a stratum of holomorphic or meromorphic differentials in any genus unless stated otherwise.

\subsection{Linear varieties}
A large class of known examples of bi-algebraic varieties arises from $\RR$-linear varieties in strata of holomorphic differentials, i.e., algebraic varieties locally in period coordinates defined by real subspaces. 
Filip proved in \cite{Filip} that, in holomorphic strata,  irreducible, $\RR$-linear bi-algebraic varieties are exactly the orbit closures for the $\GL(2,\RR)$-action on strata.
Examples of linear varieties in holomorphic strata with equations not defined over $\RR$ can be found in \cite{Moeller-Linear,KlinglerLerer,DeroinMatheus}.
In \cite{DeroinMatheus} an example of a bi-algebraic variety in a holomorphic  stratum was found, which is not linear.
\subsection{Residue varieties}
Let $\St$ be any stratum of meromorphic differentials.
The residue map $R:\St\to \CC^{|P|}$, where $|P|$ is the number of poles of $\St$ is algebraic. In particular for any algebraic variety $Z\subseteq \CC^{|P|}$ its preimage $R^{-1}(Z)$ is a bi-algebraic variety.
We call $R^{-1}(Z)$ a {\em residue variety}.
The image of the residue map is described by \cite{GendronTahar} and allows to find  in every stratum uncountably many residue varieties of any codimension.

\subsection{Covering constructions}
Let $\St[(\mu')]\to \Mgn[g',k']$ be any stratum and $S\subseteq \St[(\mu')]$ bi-algebraic. By pulling back differentials along branched coverings, one can produce new examples of bi-algebraic varieties.
Fix a ramification profile $\rho$ for branched coverings $f:X\to C, C\in \Mgn[g',k']$. The data of $\rho$ includes the number of branch points, the ramification multiplicities, and which ramification points lie in the same fiber of $f$.

Let \[H(\rho)=\{(X,C,f)\,|\,f:X\to C \text{ has ramification profile } \rho \}\]
be the Hurwitz space parametrizing covers with ramification profile $\rho$.
We will always assume that, on $C$, all branch points are marked on and,  on $X$, the preimages of all branch points are marked.

Consider the bundle of differentials
\[
\widetilde{S}(\rho)=\{(X,C,f,f^*\omega)\,|\, (X,C,f)\in H(\rho), (C,\omega) \in  S\}.
\]
Since the ramification profile is fixed, $(X,f^*\omega)$ is contained in a fixed stratum $\St.$
The image \[
S(\rho):=\pr(\widetilde{S}(\rho))\]  of the projection $\pr:\widetilde{S}(\rho)\to \St$  is a {\em covering construction}.

The source map $\pr_{H}:H(\rho)\to \Mgn, (X,C,f)\to X$ is finite, and hence the same is true for $\pr$. In particular, $S(\rho)$ is algebraic.

\begin{prop}
Let $S\subseteq \St[(\mu')]$ be a bi-algebraic variety and $\rho$ a ramification profile.
Then $S(\rho)\subseteq \St$ is bi-algebraic.
\end{prop}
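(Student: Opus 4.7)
The plan is to exploit the fact that pulling back a differential along a branched cover induces a $\ZZ$-linear relation between the relative periods of $(C,\omega)$ and those of $(X,f^*\omega)$, and then to propagate bi-algebraicity from $S$ to $S(\rho)$ via this linear relation. First I would restrict attention to a single connected component of $H(\rho)$, so that the topology of the cover is constant and the pushforward $f_*$ on relative homology is well-defined, up to the usual monodromy of the Hurwitz base. The general statement follows afterwards by taking the union over components.

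Next, I would choose a local frame $\gamma_1,\ldots,\gamma_{d'}$ for relative homology on $C$ in a small analytic chart $U'\subseteq \St[(\mu')]$ around a generic point $(C,\omega)\in S$, together with a compatible local frame $\delta_1,\ldots,\delta_{d}$ for relative homology on $X$ in a matching chart $U\subseteq \St$ around $(X,f^*\omega)$. Writing $f_*\delta_i = \sum_j c_{ij}\gamma_j$ with $c_{ij}\in\ZZ$, the pullback identity
\[
\int_{\delta_i} f^*\omega \;=\; \int_{f_*\delta_i}\omega \;=\; \sum_j c_{ij}\int_{\gamma_j}\omega
\]
shows that, after lifting the covering construction to universal covers, the period map on $\St$ is the composition of the period map on $\St[(\mu')]$ with the $\ZZ$-linear (hence algebraic) map $L:\CC^{d'}\to\CC^{d}$ whose matrix is $(c_{ij})$.

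Let $S'$ be an analytic component of the preimage of $S$ in the universal cover of $\St[(\mu')]$ with algebraic model $Z=\overline{\phi_{\St[(\mu')]}(S')}^{Zar}$ of dimension $\dim S$. Lifting through the universal cover of $\widetilde{S}(\rho)$ and projecting to an analytic component $T'$ of the preimage of $S(\rho)$ in the universal cover of $\St$, the identity above gives $\phi_\St(T')\subseteq L(\phi_{\St[(\mu')]}(S'))$ locally, and hence globally by analytic continuation. Therefore
\[
\overline{\phi_\St(T')}^{Zar} \;\subseteq\; \overline{L(Z)}^{Zar},
\]
which is algebraic of dimension at most $\dim Z = \dim S$. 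Since $\pr_H\colon H(\rho)\to \Mgn[g',k']$ is finite and $\pr\colon\widetilde{S}(\rho)\to \St$ is finite, we have $\dim S(\rho) = \dim \widetilde{S}(\rho) = \dim S$, and combining this with the general inequality $\dim\overline{\phi_\St(T')}^{Zar}\geq\dim S(\rho)$ (period coordinates are local biholomorphisms) forces equality. Hence $S(\rho)$ is bi-algebraic with algebraic model (a component of) $\overline{L(Z)}^{Zar}$.

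The main obstacle I expect is not the dimension count but the bookkeeping of frames and universal covers: one has to verify that the integer matrix $(c_{ij})$ is genuinely locally constant along the Hurwitz directions and to match a chosen component $S'$ on the base with a corresponding component $T'$ on the total space. Once the covering has been trivialized over a single connected component of $H(\rho)$, this is a routine check of the locally constant sheaf of relative homology; the reduction to a single component is harmless since a finite union of algebraic varieties of dimension $\dim S(\rho)$ is again algebraic of the same dimension.
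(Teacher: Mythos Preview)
Your approach is essentially the same as the paper's: both hinge on the integer-linear relation $\int_{\delta_i} f^*\omega = \sum_j c_{ij}\int_{\gamma_j}\omega$ (equivalently, the pullback map $p(\sigma)=f^*$ on relative cohomology) together with the finiteness of the Hurwitz maps to conclude $\dim S(\rho)=\dim S$ and hence equality of dimensions in period coordinates.

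The one place where your write-up diverges from the paper is the passage ``locally, and hence globally by analytic continuation''. This is exactly the bookkeeping issue you flag at the end, and it is not quite routine: as you move along a component $T'\subseteq\widetilde{\St}$ you are not moving in the universal cover of $H(\rho)$, so the choice of $f$ (and hence the matrix $(c_{ij})$ and the matching component $S'$) can jump. The paper handles this by a countability argument rather than by tracking a single component: it observes that $S(\rho)$ is locally contained in the countable union of the $\pi_1(\St)$-translates of $p(\sigma)(Z)$ over all monodromy representations $\sigma$, and since $S(\rho)$ is smooth at a generic point it must coincide with one of these algebraic sets. Your direct matching can be made rigorous by passing to the universal cover of (a component of) $\widetilde{S}(\rho)$, where both frames and $f_*$ genuinely trivialize, and then checking that this cover surjects onto a component $T'$; but the paper's countable-union-plus-smoothness trick is the cleaner way to close the gap.
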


\begin{proof}

We assumed that all branch points and ramification points  are marked and thus \[\dim S=\dim \widetilde{S}(\rho) =\dim S(\rho),\]
 where the last equality follows since, for a given curve  $X$, there are only finitely many morphisms to curves $C$. Here, we consider $C$ with marked points, and $f$ maps the marked points of $X$ the marked points of $C$. Thus, even if the genus is zero, there are only finitely many maps.

In case $S = \St$ it is  well known that $S(\rho)$ is bi-algebraic, locally defined by a union of  linear subspaces $f^*H^1(C-P,Z;\CC)$, where $f$ runs over all possible maps $f:X\to C$, see for example \cite[Thm. 4.4.1]{Zachhuber}.

Consider a general point $X_0$ of $S(\rho)$. We will construct countably many algebraic varieties in period coordinates that contain $S(\rho)$. Since $S(\rho)$ is smooth near $X_0$, it has to agree with one of the countably many components.

Let $(X_0,C_0,f_0,f_0^*\omega_0)\in\widetilde{S}(\rho)$ be a preimage of $X_0$. The covering $f_0$ is determined by a monodromy representation $\sigma:\pi_1(C_0,c_0)\to S_d$, where $d$ is the degree of $f_0$. 

On the universal covering $\widetilde{\St[(\mu')]}$ of $\St[(\mu')]$, we can trivialize the family of curves $C_0$ and thus identify the fundamental groups to construct a covering.
Using the monodromy representation, there is an induced map on universal coverings
\[
h(\sigma):\widetilde{\St[(\mu')]}\to \widetilde{\St},\quad
(C,\omega) \mapsto (X, f^*\omega)
\]
sending $C$ to the source of the covering $f:X\to C$ determined by the monodromy representation $\sigma$.
The following diagram commutes
\[
\begin{tikzcd}
\widetilde{\St[(\mu')]}\ar[r,"h(\sigma)"]\ar[d]& \widetilde{\St}\ar[d]\\
H^1(C_0-P(\omega_0),Z(\omega_0)) \ar[r, "p(\sigma)"]& H^1(X_0-P(f^*\omega_0),Z(f^*\omega_0)).
\end{tikzcd}
\]
Here, $p(\sigma)$ is the pullback in relative cohomology induced by $f:X_0\to C_0$ and is well-defined since we assumed that all preimages of branch points are marked.

Choose a local section $s:\St[(\mu')]\to \widetilde{\St[(\mu')]}$ defined near $(C_0,\omega_0)$. 
The image of the composition \[\St[(\mu')]\to \widetilde{\St[(\mu')]}\to \widetilde{\St} \to \St
\]
parametrizes a local irreducible component of $\St(\rho)$ near $X_0$.
By repeating the same procedure for the other preimages of $X_0$, this describes a whole neighborhood of $\St(\rho)$.

If $S$ is, locally near $(C_0,\omega_0)$,  defined by an algebraic variety $Z\subseteq H^1(C_0-P(\omega_0),Z(\omega_0))$  in period coordinates, then $S(\rho)$ is, locally near $(X_0,f^*\omega_0)$ contained in the union of all translates of $p(\sigma)(Z)$ under the action of $\pi_1(\St,X_0,f^*\omega)$ and the union over all possible monodromy representations $\sigma$. The fundamental group of a variety is countable, and thus
$S(\rho)$ is locally contained in a countable union of algebraic varieties of $\dim S$.
Since $S(\rho)$ is smooth near $X_0$, it follows that $S(\rho)$ agrees with one of the translates.
This shows $\rho(S)$ is bi-algebraic near a general point and thus bi-algebraic.

\end{proof}

The local  equations in period coordinates defining a covering construction in period coordinates can be computed using 
\[
\int_{\gamma} f^*\omega = \int_{f^*\gamma} \omega.
\]
In particular, if $\gamma$ and $\gamma'$ are two lifts of the same path the equation \[\int_{\gamma} f^*\omega = \int_{\gamma'} f^*\omega\] has to hold on $S(\rho)$.

Using covering constructions, we can produce many examples of bi-algebraic varieties whose algebraic models have peculiar behavior.
\begin{exax}[Zero fiber rank and Zariski dense arithmetic points]\label{ex:cov}
We now construct a bi-algebraic variety with fiber rank zero, which is not in the locus of exact differentials and contains a Zariski dense set of arithmetic points.
This is a continuation of the example \Cref{ex:dense}.
Start with the stratum $\St[(1,-1^3)]$. In particular $\St[(1,-1^3)]$ has fiber rank zero and every $\overline{\QQ}$-point is arithmetic. Let $S$ be the covering construction obtained from degree $2$ coverings, ramified at two  poles $y_1, y_2$ and unramified at the remaining pole $y_3$ and the zero $x_1$.
The resulting bi-algebraic variety $S$ is a $\CC^*$-invariant surface  contained in the $4$-dimensional stratum $\St[(1^2,-1^4)
]$.
One of the equations defining $S$ is given by $r_3=r_4$ where $r_3$
 and $r_4$ are the residue lying over the two preimages of $y_3$.
 The remaining equation comes from 
 \[
 \int_{\gamma} f^*\omega = \int_{f_*\gamma} \omega,
 \]
 where $\gamma$ is a relative period connecting the two zeros of $f^*\omega$.
 Since there are no relative periods on $(\PP^1-\{y_1,y_2,y_3\},\{z\})$ the period  $\int_{\gamma} f^*\omega$ is a sum of residues. 
 In particular, the map $S\mapsto R(S)$ is finite. If a bi-algebraic variety has positive fiber rank, then the fibers of the residue map $R_{|S}$ are positive dimensional as well.
 Furthermore, if $(C,\omega)$ in $\St[(1,-1^3)]$ is defined over $\overline{\QQ}$, so is the pullback $(X,f^*\omega)$ along the double covering $f:X\to C$.
And the periods of $f^*\omega$ satisfy
\[
\int_{\gamma} f^*\omega = \int_{f_*\gamma} \omega,
\]
Thus, if $\lambda\omega$ has periods in $\overline{\QQ}$, the same is true for $\lambda f^*\omega$. In particular, the pullback of an arithmetic point is arithmetic and hence $S$ contains a Zariski dense set of arithmetic points.
\end{exax}

\subsection{``Obvious" bi-algebraic varieties}
The intersection of bi-algebraic varieties is bi-algebraic, and in analogy to  M\"oller-Mullane \cite{MoellerMullane}, we call the intersection of any covering construction of a stratum with a residue variety an {\em ``obvious'' bi-algebraic variety}.
Below, we will construct examples of non-obvious bi-algebraic varieties in strata in genus zero with only simple poles.

\subsection{Log differentials}
Pulling back differentials is one way to produce bi-algebraic varieties. For branched coverings $f:X\to \PP^1$ one can also consider \[d \log f=\dfrac{df}{f}\]
instead.
Let $\rho$ be a ramification profile for branched coverings $f:X\to \PP^1$ and consider the corresponding Hurwitz space
 \[H(\rho)=\{(X,C,f)\,|\,f:X\to C \text{ has ramification profile } \rho \}.\]
 Here $C$ is $\PP^1$ with a collection of marked points.
 We mark all branch points of $f$ on $C$  as well as $0$ and $\infty$. On $X$, mark all preimages of marked points on $C$.
Over the Hurwitz space, we have the bundle of log differentials
\[
\widetilde{L}(\sigma)=\{(X,C,f,d\log f)\,|\, (X,C,f)\in H(\sigma)\}.
\]

The log differential $d \log f$ has a simple pole at every zero and every pole of $f$ as well as zeros at the remaining ramification points of $f$, not lying over $0$ or $\infty.$
If the local model of $f$ at a ramification  $x$ point is $z\mapsto z^n$, then \[\ord_{x} d\log f= n-1.
\]
In particular, for a fixed ramification profile $\rho$ the differential $d\log f$ is contained in a fixed stratum $\St$.

The image $L(\sigma)\subseteq \St$ under the forgetful map $\widetilde{L}(\sigma)\to \St$  is a {\em locus of log differentials}.
Since the forgetful map is finite and $\dfrac{df}{f}$ depends algebraically on $f$, the locus of log differentials is an algebraic variety.

\begin{prop}
Let $\rho$ be a ramification profile for branched coverings $f:X\to \PP^1$. Then $L(\sigma)$ is an affine-linear bi-algebraic variety.
\end{prop}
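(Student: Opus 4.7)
My plan is to compute the periods of $\omega = d\log f = f^*(dz/z)$ directly, via the pushforward identity $\int_\gamma f^*\eta = \int_{f_*\gamma}\eta$, and then read off an affine-linear algebraic model from the result. The first step is to verify that residues of $\omega$ are topological invariants of the profile $\rho$: at a pole of $\omega$ coming from a zero or pole of $f$ of ramification index $n$, the residue of $\omega$ equals $\pm n$. Hence the residue map $R$ is constant on each connected component of $L(\sigma)$, fixing the last $n+1$ coordinates of the period map $\phi$.

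The next step is to analyze the relative periods. If $x_0,\ldots,x_m$ are the zeros of $\omega$, they are ramification points of $f$ lying over branch points $b_{k(i)} \in \PP^1\setminus\{0,\infty\}$, so pushforward gives
\[
\int_{x_0}^{x_j} \omega \;=\; \int_{f_*\gamma_j} \frac{dz}{z} \;=\; \log\!\left(\frac{b_{k(j)}}{b_{k(0)}}\right)
\]
for an appropriate local branch of $\log$. Introducing local branches $\ell_k := \log b_k$ of the logarithms of the non-exceptional branch points $b_1,\ldots,b_r$, each relative period becomes the fixed $\ZZ$-linear difference $\ell_{k(j)}-\ell_{k(0)}$.

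Combining the two steps, on a small neighborhood $U\subseteq\St$ the restriction of $\phi$ to $L(\sigma) \cap U$ factors as
\[
L(\sigma)\cap U \;\longrightarrow\; \CC^r \;\stackrel{L}{\longrightarrow}\; \calP,
\]
where the first arrow sends a cover to the tuple $(\ell_1,\ldots,\ell_r)$ --- a local biholomorphism onto an open set, since the branch map on $H(\rho)$ is finite and $\exp$ is a local biholomorphism --- and $L$ is the affine map recording the fixed residue values in the last $n+1$ coordinates and the $\ZZ$-linear differences $\ell_{k(j)} - \ell_{k(0)}$ in the first $m$. The image $Z := L(\CC^r) \subseteq \calP$ is then an affine-linear subvariety, and $L(\sigma) \cap U$ is an analytic component of $\phi^{-1}(Z)$. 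By the definition of affine-linear bi-algebraic variety, this gives the conclusion.

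The main subtlety I expect is ensuring that $\dim Z$ equals $\dim L(\sigma)$, so that $L(\sigma)$ really is a whole component of $\phi^{-1}(Z)$ of the correct dimension rather than a proper subvariety sitting inside an affine-linear model of larger dimension. This amounts to controlling the rank of the linear map $L$ --- accounting for possible coincidences among the indices $k(j)$ --- and comparing it with $\dim H(\rho)$ via Riemann--Hurwitz. Once the rank matches, the result follows.
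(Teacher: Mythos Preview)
Your approach is correct but takes a genuinely different route from the paper. You compute the periods of $d\log f$ explicitly via pushforward and observe that they factor through the branch configuration, so the affine model is produced constructively as the image of your map $L$. The paper instead characterizes log differentials intrinsically: $(X,\omega)$ is of the form $d\log f$ if and only if every absolute period lies in $2\pi i\ZZ$ (since then $f(z)=\exp\int_{z_0}^z\omega$ is single-valued), which places $L(\rho)$ inside a countable union of affine subspaces of codimension $2g(X)+|P(\omega)|-1$; a Riemann--Hurwitz dimension count (with a separate bookkeeping step when several ramification points share a fiber, supplying extra relative-period constraints) then forces $L(\rho)$ to coincide with one of these affine spaces near a generic point. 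Your method names the model explicitly and makes the link to branch points transparent; the paper's method is less explicit but avoids the factorization through the Hurwitz branch map.

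Two small remarks. First, your write-up treats only residues among the absolute periods, which is the genus-zero picture; in positive genus the $H_1(X)$-periods must also be handled, but the same pushforward identity shows $f_*\gamma$ is an integer multiple of the loop around $0$, so these periods are locally constant in $2\pi i\ZZ$ as well. Second, the deferred rank computation is not difficult: every branch value $b_k\notin\{0,\infty\}$ carries at least one ramification point, hence at least one zero of $\omega$, so all indices $1,\ldots,r$ occur among the $k(j)$ and the linear part of $L$ has rank exactly $r-1=\dim H(\rho)$. This is essentially the same count the paper carries out, just viewed from the other side.
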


\begin{proof}
The idea is to find countably many affine-linear subspaces of dimension $\dim L(\rho)$ such that locally $L(\rho)$ is contained in the union. Since $L(\rho)$ is an analytic variety at a generic point, $L(\rho)$ coincides with one of the countably many affine spaces.
A differential $(X,\omega)\in \St$ is a log differential if and only if 
\[
\int_{\gamma} \omega \in 2\pi i\ZZ \text{ for all } \gamma \in H_1(X-P(\omega);\ZZ),
\]
since, in this case, 
\[
f(z) = e^{\int_{z_0}^z\omega},
\]
defines an an algebraic function $f:X\to \PP^1$ with $\omega= \dfrac{df}{f}$.

Therefore, $L(\rho)$ is locally  contained in a countable union of affine spaces of dimension  $2g(X) +|P(\omega)|-1$. 
Since $L(\rho)$ is an analytic variety, it has locally only finitely many components and is thus contained in finitely many affine spaces.

The dimension of the stratum is
\[
\dim \St = 2g(X) +|Z(f)|+ |P(f)|-1 + |\Ram(f)|-1,
\]
where $\Ram(f)$ is the set of ramification points of $f$, not including ramification over the marked points $0$ and $\infty.$

On the other hand
\[
\dim L(\rho)= |f(\Ram(f))|+2-3.
\] Here the plus two accounts for the marked points $0$ and $\infty$ on $\PP^1$.
Thus
\[
\codim L(\rho)= 2g(X) +|Z(f)|+ |P(f)|-1 + |\Ram(f)|-|f(\Ram(f))|.
\]

First, assume that $|f(\Ram(f))| = |\Ram(f)|$, i.e., over every branch point lies exactly one ramification point.
In that case \[
\codim_{\St} L(\rho) = 2g(X)+|P(\omega)|-1
\] and we are done.

For the general case, note that if two ramification points $p,p'$ of $f$ lie in the same fiber, then
\[
\int_p^{p'} d\log f \in 2\pi i \ZZ.
\]
Thus, on top of the $2g(X) +|Z(f)|+ |P(f)|-1$ linear equations, there is an additional set of $|\Ram(f)|-|f(\Ram(f))|$ linear equations among the relative periods satisfied on $L(\rho)$.
Since relative periods are part of a coordinate system, we have that in this case, $L(\rho)$ is contained in countably many affine spaces of dimension 
\[
 2g(X)+|Z(f)|+|P(f)|-1 + |\Ram(f)|-|f(\Ram(f))|
\]
and thus, the same argument as before goes through.
\end{proof}

\subsection{Exceptional linear varieties in strata with only simple poles}
Let $\St$ be a stratum in genus zero with only simple poles. We use the notation from 
\Cref{sec-periodsetup} .
The partial fraction decomposition and  also \cref{eq:Per} simplify to 
\[
\omega = \sum_{j=0}^m \dfrac{R_j(\lambda,x,y)}{z-y_j}.
\]  
and

\begin{equation}
\label{eq:periods_simple}
\int_{0}^{x_i}\omega =\sum_{j=0}^{m} R_j(\lambda,x,y)\log\dfrac{y_j-x_i}{y_j}
\end{equation}

In this section, we will construct a linear subvariety by locally describing linear equations in period coordinates.

Let $k\leq m$ and $M\in \CC^{(k+{n}) \times (m+n+1)}$ a matrix of the form
\[
M=\begin{pmatrix}
 A & B \\ 
 0 & C
\end{pmatrix}
\]
where $A\in \QQ^{k\times m}$ is a matrix in reduced row-echelon form,
$B\in \CC^{k\times{(n+1)}}$ and 
\begin{equation}
\label{eq:matrix}
C=
\left(
\begin{array}{ccccc}
 1 & 0 & \cdots & 0& q_1 \\
 0 & 1& \cdots&0 & \vdots \\
  \vdots & \vdots&\ddots  & \vdots &\vdots\\

 0 & 0&\cdots & 1 & q_n\\
\end{array}
\right)
\end{equation}
with $q=(q_1,\ldots, q_n)\in \QQ^n, q_i\neq 0\text{ for all } i.$
We will always assume that $M$ is in reduced row echelon form, i.e., only the last column of $B$ is potentially non-zero.

Let $U\subseteq \St$ be a period chart containing and 
\[
\phi_U(X,\omega) = (\int_{x_0}^{x_1} \omega, \ldots,\int_{x_0}^{x_m} \omega, 2\pi iR_0,\ldots,2\pi iR_n)
\]
the period map defined on $U$.
Define the analytic variety $S_M$ locally in period coordinates  by
\[
S_M := \{ (X,\omega)\in \St\,|\,  M \cdot \phi_U(X,\omega) =0\}.
\]

{\em A priori} $S_M$ is only well-defined locally on a period chart $U$. We have to check that the equations defining $S_M$ are invariant under the monodromy and thus glue to give a global algebraic variety.

\begin{prop}
Let $M$ be any matrix as in \cref{eq:matrix}.
The analytic variety $S_M\subseteq \St$ is a well-defined, bi-algebraic subvariety of expected codimension $k+n$.
Furthermore, $S_M$ is $\CC$-linear. If the matrix $B$ is defined over a field $k\subseteq \CC$, then $S_M$ is $k$-linear.
\end{prop}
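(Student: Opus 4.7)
The plan is to write out the local equations using that $M$ is in RREF, give a global algebraic definition of $S_M$ via the twisted algebraic period map $A:\Lambda\to\calP$, verify that the local and global definitions agree (monodromy invariance), and read off the codimension, bi-algebraicity, and linearity from the explicit form. Since $M$ is in RREF with the pivots of $C$ occupying the first $n$ residue columns, $B$ must have zero entries there, so $B=(0,\dots,0,b)$ for some $b\in\CC^k$. In a period chart $U$ the equation $M\phi_U=0$ therefore splits as
\begin{align*}
A\cdot\bigl(\textstyle\int_{x_0}^{x_1}\omega,\dots,\int_{x_0}^{x_m}\omega\bigr)^{T}+2\pi i\,b\,R_n&=0,\\
R_i+q_{i+1}R_n&=0\quad\text{for } i=0,\dots,n-1.
\end{align*}
The bottom $n$ equations involve only globally algebraic residues and define a codimension-$n$ residue subvariety of $\St$ independent of branch.

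To globalize, I would set $Z_M:=\ker M\subseteq\calP$ and consider the algebraic subvariety $\tilde S_M:=A^{-1}(Z_M)\subseteq\Lambda$; because $A$ restricted to any local branch of $\Gamma$ agrees with the local period map, the projection $\pr_1(\tilde S_M\cap\Gamma\cap(\St\times\CC^k))$ coincides locally with the analytic $S_M$ of the statement, giving a globally well-defined algebraic $S_M$. The key point for monodromy invariance is that on the locus $\{CR=0\}$ the vector $\vec R$ is proportional to a fixed rational vector $r_0\in\ker C\cap\QQ^{n+1}$, so a branch change shifts the top equation by $2\pi i\,AN\vec R$ for some integer matrix $N$, which becomes $2\pi i R_n\cdot ANr_0$ — a rational multiple of $R_n$ that is absorbed by replacing $b$ with $b+ANr_0$ while simultaneously moving to the matching sheet of $\Gamma$, so the local analytic sets glue.

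For the codimension, I would note that on $R(s)\ne 0$ the map $v\mapsto T_{R(s)}(v)$ surjects onto $\CC^m$, so composing with the rank-$k$ matrix $A$ gives a surjection onto $\CC^k$; hence the top block contributes $k$ further independent equations on the generic residue fiber, giving $\codim_{\St}S_M=k+n$. Bi-algebraicity is immediate, since $S_M$ agrees locally in period coordinates with the affine-linear subspace $\ker M$, and this is the algebraic model, so the criterion in \Cref{cor:bi} applies. The entries of $A$ and $C$ are rational and those of $B$ lie in the field of definition $k\subseteq\CC$, so $\ker M$ is defined over $k$, yielding $k$-linearity of $S_M$ (and $\CC$-linearity in general).

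The main obstacle is the monodromy-invariance argument in the second paragraph. The specific form of $M$ — $A$ rational in RREF, $B$ nonzero only in the last column, and $C=[I_n\mid q]$ with $q\in\QQ^n$ nonzero — is engineered precisely so that the integer-valued monodromy shifts of relative periods, once restricted to $\{CR=0\}$, reduce to rational multiples of the single residue $R_n$, which can be compensated by sheet choices in $\Gamma$.
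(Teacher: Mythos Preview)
Your setup is right and the monodromy computation is correct, but there is a genuine gap at the crucial step: you never establish that $S_M$ is \emph{algebraic}. Writing $\tilde S_M = A^{-1}(\ker M)\subseteq\Lambda$ is fine, but $\Gamma$ is transcendental, so $\pr_1(\tilde S_M\cap\Gamma\cap(\St\times\CC^k))$ is a priori only a countable union of analytic pieces in $\St$, not an algebraic subvariety. Your observation that on $\{CR=0\}$ a branch change shifts the top block by $2\pi i R_n\cdot ANr_0\in 2\pi i R_n\cdot\QQ^k$ is exactly the right ingredient, but the conclusion you draw --- ``absorbed by replacing $b$ with $b+ANr_0$ while moving to the matching sheet, so the local analytic sets glue'' --- is a tautology: it says only that $\{M\phi_U=0\}$ on one sheet equals $\{M'\phi_{U'}=0\}$ on another, which is the monodromy action restated, not resolved. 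The local sets $\{M\phi_U=0\}$ for different branches are genuinely different analytic subsets of $\St$, so they do not glue in the literal sense.

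The missing idea, which is the core of the paper's argument, is to \emph{exponentiate}. On the residue locus each top row becomes
\[
R_0\Bigl[\sum_{i,j} c_i q_j\log\tfrac{y_j-x_i}{y_j} + 2\pi i\,(\text{const})\Bigr]=0
\]
with $c_iq_j\in\QQ$; here the rationality of both $A$ and $q$ is essential. After clearing denominators the log-coefficients are integers, so exponentiating yields a single polynomial equation $c\prod_{i,j}\bigl(\tfrac{y_j-x_i}{y_j}\bigr)^{a_{ij}}=1$ in the algebraic coordinates on $\St$, independent of the branch. Each local branch of $\{H=0\}$ is then a union of irreducible components of this global algebraic hypersurface intersected with $\mathfrak R_q$, and $S_M$ is the intersection of finitely many such. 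Without this step your proposal does not prove algebraicity, and consequently the claims about global codimension and bi-algebraicity are not yet justified.
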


\begin{proof}

First consider the residue variety $\mathfrak{R}_q\subseteq \St$  defined by
\[
\mathfrak{R}_q = \{R_j = q_jR_0 \text{ for } j =1,\ldots, n \},
\]
where $(q_1,\ldots,q_n)$ is the last column of the matrix $C.$ We set $q_0=1.$
Let $H$ be any linear equation in period coordinates
\[
H = \sum_{j=1}^m \int_0^{x_j} c_j \omega +\sum_{l=0}^{n} 2\pi id_lR_l,
\]
with $c_j \in \QQ, d_l\in \CC$.

{\em Claim:  The  zero locus of $H$ on $\mathfrak{R}_q$ is a closed algebraic subvariety.}
Assuming the claim, it follows that $S_M$ is a closed algebraic subvariety since it is an intersection of finitely many subvarieties $ \cap_{j=1}^k (\mathfrak{R} \cap \{H_j =0\}))$, where  $H_j$ is the linear equation defined by the $j$-th row of $M$. Since we defined $S_M$ by $k+n$ linearly independent equations in period coordinates, it also follows that $S_M$ is bi-algebraic of expected codimension $k+n$.

It remains to prove the claim.
Using \cref{eq:periods_simple}, we can rewrite $H$ as
\[
H = R_0\left[\sum_{i=1}^m\sum_{j=0}^n c_iq_j \ln\dfrac{y_j-x_i}{y_j} + \sum_{l=0}^{n} 2\pi id_lq_l\right].
\]

As long as $c_iq_j\in\QQ$ for all $i,j$, we can multiply with the greatest common divisor of all denominators, and thus the zero locus of $H$ is equal to
\begin{equation}\label{eq:linear}
0= b+\sum_{i=1}^m\sum_{j=0}^n a_{ij}\log\dfrac{ y_j-x_i}{y_j}\end{equation}
for $b\in\CC , a_{ij}\in\ZZ$.
Here, we used that in a stratum with  only simple poles, all residues are non-zero and hence $R_0$ can be factored out of the equation.
Exponentiating \cref{eq:linear} leads to the algebraic equation
\begin{equation}\label{eq:AlgEq}
Z_H:= c\prod_{k,j} y_j^{a_{kj}} - \prod_{k,j} \left(y_j -x_k\right)^{a_{kj}} = 0,
\end{equation}
for $c\in \CC^*$.
It follows that any component of  $\mathfrak{R}_q \cap \{ H =0\}$ is an irreducible component of $ \mathfrak{R}_q \cap \{ Z_H =0 \}.$

\end{proof}

It remains to produce examples where $S_M$ is non-empty.

\begin{prop}
Let $\St$ be a stratum in genus zero with only simple poles.
There exist infinitely many non-empty linear varieties $S_M\subseteq \St$.
Furthermore, there exist infinitely many  non-``obvious'' (meromorphic) Teichm\"uller curves.
\end{prop}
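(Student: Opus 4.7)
The plan is to construct $S_M$ explicitly by prescribing a point $\omega_0 \in S_M$, and then to vary the rational data (and $\omega_0$ itself) to produce infinitely many examples.

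\textbf{Infinitely many non-empty $S_M$.} Fix $q = (q_1,\ldots,q_n) \in (\QQ^*)^n$ determining the matrix $C$ and the residue variety $\mathfrak{R}_q := \{R_{j-1} = -q_j R_n\} \subseteq \St$, of complex dimension $m+1$. Pick a generic $\omega_0 \in \mathfrak{R}_q$ with $R_n(\omega_0) \neq 0$, and choose any rank-$k$ reduced row-echelon matrix $A \in \QQ^{k \times m}$. In a local period chart around $\omega_0$, set
\[
\vec{b} \;:=\; -\frac{A \cdot \bigl(\int_{0}^{x_1}\omega_0, \ldots, \int_{0}^{x_m}\omega_0\bigr)^T}{2\pi i\, R_n(\omega_0)} \;\in\; \CC^k,
\]
and let $B$ be the $k \times (n+1)$ matrix whose only non-zero column is the last one, equal to $\vec{b}$. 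Then $M = \begin{pmatrix} A & B \\ 0 & C \end{pmatrix}$ satisfies $M\phi_U(\omega_0) = 0$, so $\omega_0 \in S_M$ and $S_M$ is non-empty. Since reduced row echelon form is canonical, distinct $A$'s yield distinct matrices $M$ and hence distinct subvarieties $S_M$; this produces infinitely many non-empty $S_M$ as $A$ varies.

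\textbf{Teichm\"uller curves.} Specialize to strata with $m \geq 2$ and take $k = m-1$, so $\dim_\CC S_M = 2$. Since $A$ and $C$ are rational, $S_M$ is a Teichm\"uller curve (its algebraic model is the complexification of a real $2$-dimensional subspace) if and only if the matrix $M$ is real, equivalently $\vec{b} \in \RR^{m-1}$. Using $R_{j-1} = -q_j R_n$ on $\mathfrak{R}_q$, the relative periods factor as $\int_{0}^{x_i}\omega = R_n(\omega)\, L_i(\omega)$, where $L_i(\omega) = \sum_{j=0}^n \alpha_j \log\frac{y_j - x_i}{y_j}$ depends only on positions (with rational $\alpha_j$ determined by $q$). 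Hence $\vec{b} = -A\, L(\omega)/(2\pi i)$, and the condition $\vec{b}\in\RR^{m-1}$ becomes the real codimension-$(m-1)$ constraint $A\, L(\omega) \in 2\pi i\, \RR^{m-1}$, cutting out a real-analytic subset $\Sigma \subseteq \mathfrak{R}_q$ of real dimension $m+3 \geq 5$. For generic $A$, the map $L$ is a local biholomorphism off a thin set, so the restricted map $\omega \mapsto \vec{b}$ is a real submersion onto an open subset of $\RR^{m-1}$, yielding an uncountable family of Teichm\"uller curves $S_M$.

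\textbf{Non-obviousness and main obstacle.} Every ``obvious'' Teichm\"uller curve is an intersection $S(\rho) \cap R^{-1}(Z)$ of a covering construction with a linear residue variety. The defining equations of $S(\rho)$ are $\ZZ$-linear in the period basis (via $\int_\gamma f^*\omega = \int_{f_*\gamma}\omega$), so in reduced row echelon form with rational $C$ any obvious Teichm\"uller curve has \emph{rational} entries in $B$. By Baker's theorem on linear forms in logarithms (\Cref{thm:baker}), the value $A\, L(\omega)$ is transcendental for all but countably many $\omega \in \Sigma$; hence uncountably many members of our family have transcendental (a fortiori irrational) $\vec{b}$, and by uniqueness of reduced row echelon form the corresponding $S_M$ cannot coincide with any obvious Teichm\"uller curve (whose rref matrix has rational $B$ when $C$ is rational, and otherwise has an incompatible $C$-block). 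The main obstacle is the non-obviousness step: one must verify that covering constructions produce $\ZZ$-linear equations in the standard period basis (so that their rref form has rational $B$), and carefully apply Baker's theorem to rule out algebraic values of $A\,L(\omega)$ for generic $\omega \in \Sigma$.
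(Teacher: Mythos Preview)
Your overall strategy matches the paper's: pick a point, solve for $B$ so that the point lies on $S_M$, then vary to get infinitely many, and detect non-obviousness by the coefficients of $B$ being irrational. However, two steps in your argument do not go through as written.

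\textbf{Existence is not established.} You fix $q$ and pick $\omega_0 \in \mathfrak{R}_q$, but you never show $\mathfrak{R}_q \neq \emptyset$; and for the Teichm\"uller-curve part you assert that the real locus $\Sigma$ has real dimension $m+3$, but a codimension count alone does not prove non-emptiness of a real-analytic subset. The paper supplies the missing input: by \cite{Tahar} every stratum contains a differential $(X,\omega)$ with \emph{all periods real}, realized as a union of horizontal half-cylinders. Perturbing the saddle connections keeps periods real while making all residues rational, so one lands in some $\mathfrak{R}_q$ (indeed in infinitely many), and the point already satisfies your reality condition. This single geometric fact replaces both your unproved non-emptiness claims.

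\textbf{Baker's theorem is misapplied and unnecessary.} Baker's theorem requires the numbers inside the logarithms to be algebraic; for a generic $\omega \in \Sigma$ the positions $x_i, y_j$ are transcendental, so the theorem says nothing about $A\,L(\omega)$. Moreover your supporting claim that ``$L$ is a local biholomorphism'' cannot hold: $L$ maps $\mathfrak{R}_q$ (of complex dimension $m+1$) to $\CC^m$. The paper bypasses this entirely. Starting from the real-period differential above, one perturbs \emph{within} the real-period locus so that the relative periods $\int_{x_0}^{x_i}\omega$ lie in $\RR\setminus\QQ$; the resulting entries of $B$ are then visibly irrational. Since an ``obvious'' curve is a residue variety intersected with a covering construction, and the covering-construction equations among relative periods are $\ZZ$-linear, the row-reduced matrix $M'=(A\mid B)$ of an obvious curve has rational entries. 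Hence the constructed $S_M$ is not obvious. No transcendence theory is needed---only the elementary observation that irrational coefficients survive in reduced row echelon form.
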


\begin{proof}

We first claim that
for infinitely many $q=(q_1,\ldots, q_n)\in (\QQ\setminus \{0\})^n$ the residue variety 
\[
\mathfrak{R}_q:=\{(X,\omega)\in\St \,|\, R_j = q_jR_0 \}
\]
is non-empty. This can be seen as follows. By \cite[Lemma 5.15.] {Tahar} every stratum contains a differential $(X,\omega)$ with all periods real. The flat picture is that of a collection of horizontal half-cylinders with identifications among the saddle connections on the core curves of the half-cylinders. By slightly deforming the saddle connections, one can arrange that all residues of $(X,\omega)$ are rational. Furthermore, a neighborhood of $(X,\omega)$ contains differentials lying in infinitely many residue varieties $\mathfrak{R}_q$. 
If $\mathfrak{R}_q$ is non-empty, then  $\codim_{\St} \mathfrak{R}_q=n$. 
Any differential $(X,\omega)\in \mathfrak{R}_q$ lies in some variety $S_M.$ In fact, one can choose the coefficients of $A$ to be arbitrary rational numbers and choose the last row of $B$ accordingly.

It remains to construct non-``obvious" Teichm\"uller curves.
The variety $S_M$ defines a curve exactly if 

\[
M=\left(
  \begin{array}{@{} c|c @{} | c | c}
    \begin{matrix}
      \quad\text{\fontsize{10mmm}{10mm}\selectfont$I$}\quad
    \end{matrix}
    &
    \begin{matrix}
      b_1\\\vdots\\b_{m-1} 
    \end{matrix}&
    \begin{matrix}
      \quad\text{\fontsize{10mmm}{10mm}\selectfont$0$}\quad
    \end{matrix}
    &
    \begin{matrix}
      p_1\\\vdots\\p_{m-1} 
    \end{matrix}
    \\
    \hline

\begin{matrix}
      \quad\text{\fontsize{10mmm}{10mm}\selectfont$0$}\quad
    \end{matrix}
    &
    \begin{matrix}
      0\\\vdots\\0 
    \end{matrix}&
    \begin{matrix}
      \quad\text{\fontsize{10mmm}{10mm}\selectfont$I$}\quad
    \end{matrix}
    &
    \begin{matrix}
      q_1\\\vdots\\q_{n} 
    \end{matrix}
  \end{array}
\right)
\]

If $S_M$ is ``obvious'' then the equations among residues can have arbitrary coefficients, but the equations among relative periods are defined over $\QQ$.
In other words the tangent space $T_{(X,\omega)} C$ of an obvious Teichm\"uller curve $C$ 
is a $2$-dimensional subspace of $H^1(X-P,Z;\CC)$ such that 
\[
T_{(X,\omega)} C \cap H^1(X,Z;\CC)
\]
is a subspace defined over the rationals.
For the variety $S_M$ the intersection  $T_{(X,\omega)}S_M \cap H^1(X,Z;\CC)$ is the subspace annihilated by the block matrix 
\[
M' = \begin{pmatrix}
A & B
\end{pmatrix}
\]
Since we assumed $M'$ is in reduced row-echelon form the field of definition for  $T_{(X,\omega)}S_M \cap H^1(X,Z;\CC)$ is the smallest subfield of $\CC$ containing all coefficients $M'$.
Thus, proceed as follows.
 Let $(X,\omega)\in\mathfrak{R}$ such that all periods are real. By perturbing $\omega$ slightly, we can assume that  all relative periods are in $\RR\setminus \QQ$. Choose $b_i=0$ for all $i$ and $p_i = -\int_{x_0}^{x_i} \omega$.
The resulting curve $S_M$ contains $(X,\omega)$ and  $T_{(X,\omega)}S_M \cap H^1(X,Z;\CC)$ is defined over the smallest field containing all relative periods.

\end{proof}
Using the same procedure, one can produce linear subvarieties defined over complex fields and in higher dimensions as well.

\clearpage

\end{document}